\newtheorem{theorem}{Theorem}
\numberwithin{theorem}{section}
\newtheorem{corollary}[theorem]{Corollary}
\newtheorem{lemma}[theorem]{Lemma}
\newtheorem{proposition}[theorem]{Proposition}
\theoremstyle{definition}
\newtheorem{definition}[theorem]{Definition}
\newenvironment{bprooftree}
  {\leavevmode\hbox\bgroup}
  {\DisplayProof\egroup}
\gdef\scalefactor{#1}\begin{center}\proofSkipAmount \leavevmode}%
\scalebox{\scalefactor}{\DisplayProof}\proofSkipAmount \end{center} }
\numberwithin{equation}{section}
\newcommand{\isigma}{\mathbf{I\Sigma}}
\newcommand{\idelta}{\mathbf{I\Delta}}
\newcommand{\pa}{\mathbf{PA}}
\newcommand{\pr}{\text{Pr}}
\newcommand{\feps}{F_{\varepsilon_0}}
\newcommand{\true}{\text{True}}
\newcommand{\false}{\text{False}}
\newcommand{\rfn}{\text{RFN}}
\newcommand{\add}{\text{Add}}
\newcommand{\mult}{\text{Mult}}
\newcommand{\dterm}{\text{td}}
\newcommand{\height}{\text{ht}}
\newcommand{\ul}[1]{\boxed{#1}}
\newcommand{\ax}{\text{Ax}}
\newcommand{\ind}{\text{Ind}}
\newcommand{\sind}{\textbf{ind}}
\newcommand{\cut}{\text{Cut}}
\newcommand{\scut}{\textbf{cut}}
\newcommand{\zet}{\mathbf{Z}}
\newcommand{\acc}{\text{Acc}}
\newcommand{\pend}{\text{End}}
\newcommand{\prule}{\text{Rule}}
\newcommand{\pord}{\text{Ord}}
\newcommand{\pred}{\text{Pred}}
\newcommand{\fund}{\text{Fund}}
\newcommand{\rep}{\text{Rep}}
\newcommand{\dcut}{d_\text{cut}}
\newcommand{\dacc}{d_\text{acc}}
\newcommand{\id}{\text{Id}}
\newcommand{\step}{\text{Step}}
\newcommand{\lc}{\text{LC}}
\newcommand{\stepto}{\text{top}}
\newcommand{\stepbo}{\text{bottom}}
\newcommand{\stepba}{\text{base}}
\newcommand{\stepproof}{\text{StepProof}}
\newcommand{\stepbound}{\text{StepBound}}
\newcommand{\seqproof}{\text{SeqProof}}
\newcommand{\seqbound}{\text{SeqBound}}
\newcommand{\dotminus}{\mathbin{\text{\@dotminus}}}
\newcommand{\@dotminus}{%
  \ooalign{\hidewidth\raise1ex\hbox{.}\hidewidth\cr$\m@th-$\cr}%
}
\title[Characterizing $\Sigma_1$-Reflection over the Fragments of $\pa$]{A Uniform Characterization of $\Sigma_1$-Reflection\\ over the Fragments of Peano Arithmetic}
\author{Anton Freund}
\begin{document}

\begin{abstract}
We show that the theory $\isigma_1$ of $\Sigma_1$-induction proves the following statement: For all $n\geq 2$, the uniform $\Sigma_1$-reflection principle over the theory $\isigma_n$ is equivalent to the totality of the function $F_{\omega_n}$ at stage $\omega_n$ of the fast-growing hierarchy. The method applied is a formalization of infinite proof theory. The literature contains several proofs which place the quantification over $n$ in the meta-theory (and also prove the separate cases $n=0,1$). In contrast, the author knows of no explicit argument that would allow us to internalize the quantification while keeping the meta-theory as low as $\isigma_1$. It is well possible that this has been considered before. Our aim is merely to provide a detailed exposition of this important result.
\end{abstract}

\maketitle

\tableofcontents

Recall the uniform $\Sigma_1$-reflection principle over the theory $\isigma_n$. It is given by the $\Pi_2$-formula
\begin{equation*}
\rfn_{\isigma_n}(\Sigma_1):\equiv\forall_\varphi(\text{``$\varphi$ a closed $\Sigma_1$-formula"}\land\pr_{\isigma_n}(\varphi)\rightarrow\true_{\Sigma_1}(\varphi)),
\end{equation*}
using a standard truth definition $\true_{\Sigma_1}(\cdot)$ for $\Sigma_1$-formulas. Recall also the fast-growing hierarchy of functions $F_\alpha$ indexed by ordinals. The precise definition of these functions varies slightly throughout the literature; we adopt the version of \cite{buchholz-wainer-87}, or also \cite{sommer95}. In \cite{sommer95} one can find a $\Delta_0$-formula $F_\alpha^i(x)=y$ in the variables $\alpha$, $i$, $x$ and $y$ which defines the functions $F_\alpha$ (and their iterations) for $\alpha<\varepsilon_0$. Many relations between these functions (but, of course, not the fact that they are total) are then provable in $\isigma_1$.\\
The literature contains several proofs of the equivalence
\begin{equation*}
 \isigma_1\vdash\rfn_{\isigma_n}(\Sigma_1)\leftrightarrow F_{\omega_n}\!\downarrow
\end{equation*}
for fixed $n$. Famously, Paris in \cite{paris80} gave a model-theoretic argument for this fact. Sommer in \cite{sommer95} shows the same result over an even weaker base theory. For a proof theoretic approach (via iterated reflection principles) we refer to \cite[Theorem 1, Proposition 7.3, Remark 7.4]{beklemishev03}. According to H\'ajek and Pudl\'ak's comment after \cite[Theorem II.3.36]{hajek91} it is expected but not quite trivial that the quantification can be internalized. It is well possible that this has now been checked for some of the cited proofs. The aim of the present paper is to present a rigorous argument (by a different method) in detail. In fact, we only prove the direction
\begin{equation*}
\isigma_1\vdash\forall_{n\geq 2}(F_{\omega_n}\!\downarrow\,\rightarrow\rfn_{\isigma_n}(\Sigma_1))
\end{equation*}
of the claimed equivalence. The other direction is a straightforward formalization of Gentzen's lifting construction in \cite{gentzen43} (see also \cite[Theorem 4.11]{wainer-fairtlough-98}). Our approach to the remaining direction is essentially a formalization of \cite{buchholz-wainer-87} (refined in \cite{rathjen-carnielli-91}), heavily inspired by \cite[Appendix A]{rathjen13}. Let us describe this in some more detail: In \cite{buchholz91} one finds a rigorous proof that primitive recursive arithmetic can deduce the $\Pi_2$-reflection principle over Peano arithmetic from the assumption that no primitive recursive sequence of ordinals $<\varepsilon_0$ can descend infinitely long. Now the computation of a value $F_\alpha(n)$ is closely linked to certain descending sequences of ordinals below $\alpha$ (see \cite[Section 5.2]{sommer95} and \cite[Lemma 2.4]{rathjen13}). The theory $\isigma_1$ shows that these sequences must reach zero if $F_\alpha(n)$ is to be defined. Since this does not (immediately) apply to any primitive recursive sequence the result of \cite{buchholz91} is not sufficient for our purpose. Instead we work with the infinite proof system from \cite{buchholz-wainer-87}, featuring a particularly careful assignment of ordinals to proofs, closely liked to the computation of the values $F_\alpha(n)$. Now \cite{buchholz-wainer-87} is not concerned with strict bounds for fragments of Peano Arithmetic. The modifications which are necessary to get such bounds can be found in \cite{rathjen-carnielli-91}. The task is then to apply the formalization method of \cite{buchholz91} to the relevant parts of \cite{buchholz-wainer-87} and \cite{rathjen-carnielli-91}.

\section{The Finite Proof System and the Theories \texorpdfstring{$\isigma_n$}{ISigma-n}}

We want to analyze finite proofs in the theories $\isigma_n$ through an embedding into infinite proof figures. If we try to keep the infinite system as concise as possible then the embedding tends to get more difficult. To facilitate it we start with some transformations of the finite proof systems, assimilating them to features of the infinite system.\\
The first adaptation concerns the proof calculus itself. There is no canonic choice of proof calculus to be used with Peano Arithmetic, and it is standard that all calculi in use are ($\isigma_1$-provably) equivalent. Our choice is a Tait version of sequent calculus: This means that the relation symbols occur in pairs, with one member denoting the complement of the other. If $R$ is a name (in the meta-language) for a relation symbol then $\overline R$ denotes the complement of this relation symbol. Note that $\overline{\overline R}$ and $R$ refer to the same relation symbol of the object language. Often one member of the pair will have a natural name, like $\add(\cdot,\cdot,\cdot)$ for the graph of addition. The symbol for the complement will then be denoted by $\overline\add$. However, this does not mean that the relation symbol denoted by $\overline\add$ is less ``primitive'' than the relation symbol denoted by $\add$. Any relation symbol applied to the appropriate number of terms is a prime formula. Complex formulas are built by the connectives $\land$ and $\lor$ and the quantifiers $\forall$ and $\exists$. The negation symbol $\neg$ occurs only in the meta-language. Its use is governed by the stipulations that
\begin{align*}
 \neg R(t_1,\dots ,t_k)\quad&\text{is meta-notation for the formula}\quad\overline R(t_1,\dots ,t_k),\\
 \neg(A\land B)\quad&\text{denotes the same formula as}\quad(\neg A)\lor(\neg B),\\
 \neg(A\lor B)\quad&\text{denotes the same formula as}\quad(\neg A)\land(\neg B),\\
 \neg(\forall_x A)\quad&\text{denotes the same formula as}\quad\exists_x(\neg A),\\
 \neg(\exists_x A)\quad&\text{denotes the same formula as}\quad\forall_x(\neg A).
\end{align*}
As in \cite{buchholz-wainer-87} we always allow the following \textbf{rules of finite sequent calculus} (they may be complemented by additional rules specific to a certain theory):
\begin{gather*}
\begin{bprooftree}
\AxiomC{$\Gamma,A$}
\AxiomC{$\Gamma,B$}
\LeftLabel{($\land$)}
\BinaryInfC{$\Gamma,A\land B$}
\end{bprooftree}\qquad
\begin{bprooftree}
\AxiomC{$\Gamma,A$}
\LeftLabel{($\lor0$)}
\UnaryInfC{$\Gamma,A\lor B$}
\end{bprooftree}\qquad
\begin{bprooftree}
\AxiomC{$\Gamma,B$}
\LeftLabel{($\lor1$)}
\UnaryInfC{$\Gamma,A\lor B$}
\end{bprooftree}\\
\begin{bprooftree}
\AxiomC{$\Gamma,A$}
\LeftLabel{($\forall$)}
\RightLabel{($x$ not free in $\Gamma$)}
\UnaryInfC{$\Gamma,\forall_x A$}
\end{bprooftree}\qquad
\begin{bprooftree}
\AxiomC{$\Gamma,A[x:=t]$}
\LeftLabel{($\exists$)}
\UnaryInfC{$\Gamma,\exists_x A$}
\end{bprooftree}\qquad
\begin{bprooftree}
\AxiomC{$\Gamma,A$}
\AxiomC{$\Gamma,\neg A$}
\LeftLabel{(cut)}
\BinaryInfC{$\Gamma$}
\end{bprooftree}
\end{gather*}
We remark that $A[x:=t]$ denotes the result of substituting the term $t$ for the variable $x$ in the formula $A$ (avoiding variable clashes). With the exception of the eigenvariable $x$ of the rule ($\forall$) we require that any free variable in the lower sequent of a rule occurs in one of its upper sequents (cf.\ \cite{buchholz91}). The only cases where this is not automatic are the rules ($\exists$), because of variables that may occur in the term $t$, and the rule (cut). In these cases the superfluous variables in the upper sequent can be replaced by the constant $0$ (zero), which will always be part of the language.\\ 
Axioms are the initial sequents with which a proof starts. For the elimination of cuts it is important that any substitution instance (with terms substituted for free variables) of an axiom is an axiom. In all theories we allow the \textbf{logical axioms}
\begin{equation*}
 A,\neg A
\end{equation*}
for all atomic formulas $A$. Concerning side formulas, we allow that axioms and lower sequents of rules may be weakened at any moment, without that a weakening rule would have to be applied explicitly (following \cite{buchholz91}).\\
The next adaptation concerns the language of Peano Arithmetic. Here our choice is certainly not canonic: We follow \cite{buchholz91} in discarding all function symbols but $0$ and $S$ (successor), which will have the advantage that all closed terms are numerals and that we can deduce the conclusion of induction rules for arbitrary closed terms. For the moment we admit the relation symbols $=$ (with complement $\neq$) and $<$ (complement $\nless$), as well as ternary relations $\add(x,y,z)$ and $\mult(x,y,z)$ to denote the graphs of addition and multiplication. Later we will add more relation symbols to represent bounded formulas. All fragments of Peano Arithmetic will share (all substitution instances and weakenings of) the following \textbf{elementary axioms}:
\begin{gather*}\renewcommand{\arraystretch}{1.2}
x=x\\
x\neq y,y=x\\
x\neq y,y\neq z,x=z\\
Sx\neq 0\\
Sx\neq Sy,x=y\\
x\neq y,Sx=Sy\\
\add(x,0,x)\\
\overline\add(x,y,z),\add(x,Sy,Sz)\\
\overline\add(x,y,z),\overline\add(x,y,z'),z=z'\\
x\neq x',y\neq y',z\neq z',\overline\add(x,y,z),\add(x,y,z)\\
\mult(x,0,0)\\
\overline\mult(x,y,w),\overline\add(w,x,z),\mult(x,Sy,z)\\
\overline\mult(x,y,z),\overline\mult(x,y,z'),z=z'\\
x\neq x',y\neq y',z\neq z',\overline\mult(x,y,z),\mult(x,y,z)\\
x\nless 0\\
x\nless Sy,x<y,x=y\\
0<Sx\\
x\nless y,Sx<Sy\\
Sx\nless y,x<y\\
x<Sx\\
x\neq x',y\neq y',x\nless y,x'< y'
\end{gather*}
More familiar versions of the axioms are easily deduced, for example by the derivation
\begin{equation*}
\begin{bprooftree}
\AxiomC{$Sx\neq Sy,x=y$}
\LeftLabel{($\lor1$)}
\UnaryInfC{$Sx\neq Sy,Sx\neq Sy\lor x=y$}
\LeftLabel{($\lor0$)}
\UnaryInfC{$Sx\neq Sy\lor x=y$}
\LeftLabel{($\forall$)}
\UnaryInfC{$\forall_y(Sx\neq Sy\lor x=y)$}
\LeftLabel{($\forall$)}
\UnaryInfC{$\forall_x\forall_y(Sx\neq Sy\lor x=y)$}
\end{bprooftree}
\end{equation*}
If we add some amount of induction then the axioms characterizing inequality become certainly redundant. It will, however, be convenient to have these axioms from the start on, and certainly it does no harm. The alternative characterization of inequality by the formulas
\begin{gather*}
 \forall_{x,y}(x\nless y\lor\exists_z\add(Sx,z,y)),\\
 \forall_{x,y}(\forall_z\overline\add(Sx,z,y)\lor x< y)
\end{gather*}
will be provable in the theories $\isigma_n'$ with $n\geq 1$. To describe these theories we have to define the arithmetical hierarchy (we use primed symbols because we will introduce a different variant later):
\begin{itemize}
\item $\Delta_0'=\Sigma_0'=\Pi_0'$ is the class of bounded formulas, built from atomic formulas by the connectives and bounded quantifiers $\forall_{x< t}A\equiv\forall_x(x< t\rightarrow A)$ or $\exists_{x< t}A\equiv\exists_x(x< t\land A)$, where $t$ is a term that does not contain the bound variable $x$.
\item $\Sigma_{n+1}'$ is the class of formulas $\exists_x A$ where $A$ is a formula in the class $\Pi_n'$.
\item $\Pi_{n+1}'$ is the class of formulas $\forall_x A$ where $A$ is a formula in the class $\Sigma_n'$.
\end{itemize}
For $n\geq 1$ the theory $\isigma_n'$ consists of the logical axioms and the elementary axioms. Besides the general rules of finite sequent calculus it allows applications of the induction rule
\begin{equation*}
\begin{bprooftree}
\AxiomC{$\Gamma,A(0)$}
\AxiomC{$\Gamma,\neg A(x),A(Sx)$}
\LeftLabel{(Ind)}
\RightLabel{($x$ not free in $\Gamma$)}
\BinaryInfC{$\Gamma,A(t)$}
\end{bprooftree}
\end{equation*}
for any $\Sigma_n'$-formula $A$ and any term $t$. The advantage over induction axioms is that the formula $A(t)$ in the lower sequent has complexity $\Sigma_n'$. Partial cut elimination (in the finite system) will then allow to reduce all cut formulas to this complexity. As described in (cf.\ \cite{wainer-fairtlough-98}) the induction axioms can be deduced thanks to the possibility of side formulas in the induction rule:\\
\begin{equation*}
\begin{bprooftree}
\AxiomC{$\neg A(0),A(0)$}
\AxiomC{$\neg A(x),A(x)$}
\AxiomC{$A(Sx),\neg A(Sx)$}
\LeftLabel{($\land$)}
\BinaryInfC{$\neg A(x),A(Sx),A(x)\land\neg A(Sx)$}
\LeftLabel{($\exists$)}
\UnaryInfC{$\neg A(x),A(Sx),\exists_x(A(x)\land\neg A(Sx))$}
\LeftLabel{(Ind)}
\BinaryInfC{$\neg A(0),\exists(A(x)\land\neg A(Sx)),A(y)$}
\LeftLabel{($\forall$)}
\UnaryInfC{$\neg A(0),\exists(A(x)\land\neg A(Sx)),\forall_y A(y)$}
\end{bprooftree}
\end{equation*}
The reason for which we have required $n\geq 1$ is that the case $n=0$ does not give rise to the usual theory $\idelta_0'$: It cannot even prove the totality of addition. On the other hand, $\isigma_n'$ proves the formula $\forall_x\forall_y\exists_z\add(x,y,z)$ by induction on $y$ with the parameter $x$ fixed; the analogue is true for multiplication. It is a standard fact that one can then add function symbols $+$ and $\cdot$ and the axioms $\add(x,y,x+y)$ and $\mult(x,y,x\cdot y)$. The recursive clauses in their usual form are easily deduced. It remains to make induction available for formulas that contain the new function symbols: To this end one gives a translation that eliminates the function symbols from $\Sigma_n'$-formulas. The equivalence between a formula and its translation is provable in the system that contains the function symbols but not the new induction axioms (see \cite[Section I.2(e)]{hajek91} for details). Note that the translation which eliminates the function symbols does not only preserve provability but also truth. This justifies us in proving reflection principles for the theories without the function symbols.\\
We want to change the finite proof system yet another time. The purpose is to lift the logic of bounded quantifiers into the axioms, thus hiding it from the infinite proof system. To do so we introduce \textbf{relation symbols for bounded formulas}: Whenever $A$ is a $\Delta_0'$-formula in the old language and $(x_1,\dots ,x_k)$ lists the free variables of $A$ in some order then $R_A^{(x_1,\dots,x_k)}$ is such a relation symbol. In writing $R_A^{(x_1,\dots,x_k)}$ we implicitly demand that these conditions are met. The relation symbols $R_A^{(x_1,\dots,x_k)}$ and $R_{A[x_1:=y_1,\dots ,x_k:=y_k]}^{(y_1,\dots,y_k)}$ (simultaneous substitution) are identified. The idea is that the new atomic formula $R_A^{(x_1,\dots,x_k)}(t_1,\dots ,t_k)$ corresponds to the old $\Delta_0'$-formula $A[x_1:=t_1,\dots,x_k:=t_k]$. Accordingly, we define the extension of $R_A^{(x_1,\dots,x_k)}$ to be the set
\begin{equation*}
 \{(n_1,\dots,n_k)\,|\,A[x_1:=\overline{n_1},\dots,x_k:=\overline{n_k}]\text{ is true in the standard model}\}.
\end{equation*}
Back on the syntactic level, negation is defined by stating that
\begin{equation*}
 R_{\neg A}^{\vec x}\qquad\text{is the complement of }R_A^{\vec x}.
\end{equation*}
In addition to the logical axioms the new relation symbols are governed by the following \textbf{axioms of bounded logic} (remarks on the notation follow):
\begin{gather*}
 \neg A,R_A^{\vec x} \vec x\qquad\text{($A$ a prime formula)}\\
 \neg R_{A}^{\vec x} \vec x,\neg R_{B}^{\vec y} \vec y,R_{A\land B}^{\vec z} \vec z\\
 \neg R_{A}^{\vec x} \vec x,R_{A\lor B}^{\vec z} \vec z\\
 \neg R_{B}^{\vec y} \vec x,R_{A\lor B}^{\vec z} \vec z\\
 \neg R_{A[v:=t]}^{\vec x} \vec x,R_{\exists_v A}^{\vec y} \vec y\\
 R_{\forall_{v<0}A}^{\vec x} \vec x\\
 \neg R_{\forall_{v<w}A}^{\vec x} \vec x,\neg R_{A[v:=w]}^{\vec y} \vec y,R_{\forall_{v<Sw}A}^{\vec z} \vec z\\
 R_{\neg A}^{(x_1,\dots ,x_k)}(t_1,\dots ,t_k),R_{A[x_1:=t_1,\dots ,x_k:=t_k]}^{\vec y} \vec y
\end{gather*}
It may seem slightly irritating that the variables in the superscripts of the relation symbols correspond to the variables to which the relation symbols are applied. This is simply economic notation to assure the correct interdependencies: Consider, for example, an axiom of the second type in the list (conjunction introduction). Our notational convention assures that $\vec z$ lists the free variables of the formula $A\land B$. Thus $\vec z$ contains all variables listed in $\vec x$ and $\vec y$, in some order. The formulation of the axiom assures that the arguments of the relation symbols are related in the same way. Once the permutations of the arguments are understood one can, as always, pass on to arbitrary substitution instances. Of course, substituting into the formula $R_A^{\vec x} \vec x$ will not change the variables that appear in the (compound) relation symbol $R_A^{\vec x}$ but only the arguments to which this relation symbol is applied. Concretely, the sequent
\begin{equation*}
 \neg R_{x_1=x_2}^{(x_2,x_1)}(x_2,x_1),\neg R_{x_2=x_3}^{(x_2,x_3)}(x_2,x_3),R_{x_1=x_2\land x_2=x_3}^{(x_3,x_1,x_2)}(x_3,x_1,x_2)
\end{equation*}
is an axiom. Thus its substitution instance
\begin{equation*}
 \neg R_{x_1=x_2}^{(x_2,x_1)}(y,x),\neg R_{x_2=x_3}^{(x_2,x_3)}(y,z),R_{x_1=x_2\land x_2=x_3}^{(x_3,x_1,x_2)}(z,x,y)
\end{equation*}
is an axiom as well. In view of $(x_1=x_2)[x_2:=y,x_1:=x]\equiv (x=y)$ and similar substitutions into the other formulas it corresponds to the sequent
\begin{equation*}
 x\neq y,y\neq z,x=y\land y=z.
\end{equation*}
On a less technical note, observe that we have axioms to introduce the propositional connectives and the existential quantifier in the subscripts. We cannot give an axiom that introduces a universal quantifier, since the rule ($\forall$) does not preserve the truth of arbitrary substitution instances. To introduce bounded quantifiers we will use induction for the formula $R_{\forall_{v<y}A}^{\vec x,y} (\vec x,y)$ with induction variable $y$. Of course, in the induction step the variable $y$ only increases in argument position, not in the subscript. The necessary connection is provided by the last of the axioms of bounded logic.\\
One should convince oneself that any closed substitution instance of the axioms of bounded logic contains a true (prime) formula. Let us verify this for the last type of axioms (substitution of terms): A closed substitution instance of this axiom has the form
\begin{equation*}
 R_{\neg A}^{(x_1,\dots ,x_k)}(t_1[\vec y:=\vec n],\dots ,t_k[\vec y:=\vec n]),R_{A[x_1:=t_1,\dots ,x_k:=t_k]}^{\vec y} \vec n.
\end{equation*}
Thus we have to see that either the formula
\begin{equation*}
 \neg A[x_1:=t_1[\vec y:=\vec n],\dots,x_k:=t_k[\vec y:=\vec n]]
\end{equation*}
or the formula
\begin{equation*}
 (A[x_1:=t_1,\dots ,x_k:=t_k])[\vec y:=\vec n]
\end{equation*}
is true. By a standard fact about the composition of substitutions the first formula is indeed the negation of the second.\\
In the extended language, let us define a more restrictive variant of the \textbf{arithmetical hierarchy}:
\begin{itemize}
\item $\Delta_0=\Sigma_0=\Pi_0$ is the class of atomic formulas, i.e.\ formulas of the form $R(t_1,\dots,t_k)$ where $R$ is a relation symbol and $t_1,\dots,t_k$ are terms.
\item $\Sigma_{n+1}$ is the class of formulas $\exists_x A$ where $A$ is a formula in the class $\Pi_n$.
\item $\Pi_{n+1}$ is the class of formulas $\forall_x A$ where $A$ is a formula in the class $\Sigma_n$.
\end{itemize}
For $n\geq 1$ the theory $\isigma_n$ consists of the logical axioms, the elementary axioms and the axioms for bounded logic. Besides the general rules of finite sequent calculus it allows applications of the induction rule whenever the induction formula is in the class $\Sigma_n$.\\
We would like to reduce the theory $\isigma_n'$ to the theory $\isigma_n$, but before we should simplify the notation: Given a $\Delta_0'$-formula $A$, we write $\ul A$ for the formula $R_A^{\vec x} \vec x$ where the variables $\vec x$ are ordered according to their occurrence in $A$. Note that we cannot order the superscript according to an ``external principle'', as e.g.\ the indices of the variables. The issue becomes apparent in an example: Since $\forall_{x_0}\,x_0<x_1$ and $\forall_{x_2}\,x_2<x_1$ are the same formula, the formulas $\forall_{x_0}\ul{x_0<x_1}$ and $\forall_{x_2}\ul{x_2<x_1}$ should be equal as well. Now the formula $\forall_{x_0} R_{x_0<x_1}^{(x_0,x_1)}(x_0,x_1)$ is different from the formula $\forall_{x_2} R_{x_2<x_1}^{(x_1,x_2)}(x_1,x_2)$ (even if the two formulas are equivalent). On the other hand, $\forall_{x_0} R_{x_0<x_1}^{(x_0,x_1)}(x_0,x_1)$ and $\forall_{x_2} R_{x_2<x_1}^{(x_2,x_1)}(x_2,x_1)$ are the same formula (recall that we identify relation symbols that only differ in the names of the variables). In the new notation, the sequent
\begin{equation*}
\neg A,\ul A\qquad\text{(for $A$ a prime formula)}
\end{equation*}
is an axiom of bounded logic. One should observe that $\neg\ul A$ and $\ul{\neg A}$ are the same formula, so that the ``converse implication'' $A,\neg\ul A$ is an axiom of the same type. For any $\Delta_0'$-formulas $A$, $B$ and $C$ the following are also axioms of bounded logic:
\begin{gather*}
\neg\ul A,\neg\ul B,\ul{A\land B}\\
\neg\ul A,\ul{A\lor B}\\
\neg\ul B,\ul{A\lor B}\\
\neg\ul{t<y\land A[x:=t]},\ul{\exists_{x<y} A}\\
\ul{\forall_{v<0} A}\\
\neg\ul{\forall_{x<y} A},\neg\ul{A[x:=y]},\ul{\forall_{x<Sy} A}\\
\ul{\neg A}[x:=t],\ul{A[x:=t]}
\end{gather*}
We will permit ourselves to use the admissible rules associated with these axioms. Thus e.g.
\begin{equation*}
 \begin{bprooftree}
 \AxiomC{$\Gamma,\ul A$}
 \AxiomC{$\Gamma,\ul B$}
 \BinaryInfC{$\Gamma,\ul{A\land B}$}
 \end{bprooftree}
\end{equation*}
alludes to the following proof tree:
\begin{equation*}
\begin{bprooftree}
\AxiomC{$\Gamma,\ul B$}
\AxiomC{$\Gamma,\ul A$}
\AxiomC{$\neg\ul A,\neg\ul B,\ul{A\land B}$}
\LeftLabel{(cut)}
\BinaryInfC{$\Gamma,\neg\ul B,\ul{A\land B}$}
\LeftLabel{(cut)}
\BinaryInfC{$\Gamma,\ul{A\land B}$}
\end{bprooftree}
\end{equation*}
The other axioms give rise to admissible rules that allow us to box or un-box a prime formula, to introduce a disjunction inside a box, to introduce an existential quantifier inside a box, and to pull a substitution inside or out of a box. Recall also that negation is a notion of the meta-laguage, and that $\neg\ul A$ and $\ul{\neg A}$ are the same formula. Thus, moving a negation inside or out of a box does not even require an admissible rule.\\
The notation $\ul A$ can be extended to arbitrary formulas $A$ of the old language (i.e.\ without occurrences of the relation symbols $R_A^{\vec x}$): To do so, one states that $\ul{B\land C}$ (resp.\ $\forall_x B$, and analogously for $\lor$ and $\exists$) is the same formula as $\ul B\land\ul C$ (resp.\ $\forall_x \ul B$) if $B\land C$ (resp.\ $\forall_x B$) is not a $\Delta_0'$-formula. Equivalently, one could say that we replace maximal $\Delta_0'$-subformulas. Recall that we want to reduce the theory $\isigma_n'$ to the theory $\isigma_n$. The crucial step is to see that $\mathbf{I\Delta}_0$ admits the boxing and un-boxing of arbitrary formulas:
\begin{equation*}
 \parbox{11cm}{For any formula $A$ there is an $\mathbf{I\Delta}_0$-proof of the sequent $\neg A,\ul A$.}
\end{equation*}
This is proved by structural induction over the formula $A$. For a prime formula the required sequent is an axiom of bounded logic. The cases of a conjunction, a disjunction and an existential quantifier are easy: One uses the induction hypothesis and the rules of sequent calculus, or the admissible rules where one is concerned with $\Delta_0'$-formulas. Observe also that $A$ and $\ul A$ have the same free variables. Now assume that $A$ is a universal formula. The case where $A$ is not a $\Delta_0'$-formula is again easy. Otherwise we have $A\equiv\forall_{v<t}B$ for some $\Delta_0'$-formula $B$. The crucial idea is to prove $\neg A,\ul{w\nleq t\lor\forall_{v<w}B}$ by induction over the fresh variable $w$. Of course, the induction has to take place outside the box. This is easy to achieve because substitutions can be pulled in and out. Let us first display the end of the proof, deducing the desired end-sequent from the result of the induction (the left initial sequent):
\begin{equation*}
\begin{bprooftree}
\AxiomC{$\neg A,\ul{w\nleq t\lor\forall_{v<w}B}[w:=t]$}
\UnaryInfC{$\neg A,\ul{t\nleq t\lor\forall_{v<t}B}$}
\AxiomC{$t\leq t$}
\UnaryInfC{$\ul{t\leq t}$}
\AxiomC{$\ul{\neg\forall_{v<t}B},\ul{\forall_{v<t}B}$}
\BinaryInfC{$\ul{t\leq t\land\neg\forall_{v<t}B},\ul{\forall_{v<t}B}$}
\LeftLabel{(cut)}
\BinaryInfC{$\neg A,\ul{\forall_{v<t}B}$}
\end{bprooftree}
\end{equation*}
Since the induction formula is boxed it is indeed a $\Delta_0$-formula. Recall that the induction variable $w$ was fresh, so that it does not appear in the side formula $\neg A$. Thus it suffices to deduce the two premises of the induction rule, namely the base
\begin{equation*}
 \neg A,\ul{w\nleq t\lor\forall_{v<w}B}[w:=0]
\end{equation*}
and the step
\begin{equation*}
 \neg A,\neg\ul{w\nleq t\lor\forall_{v<w}B},\ul{w\nleq t\lor\forall_{v<w}B}[w:=Sw].
\end{equation*}
Using the admissible rules, the base case can be reduced to $\ul{0\nleq t\lor\forall_{v<0}B}$ and then to $\ul{\forall_{v<0}B}$, which is an axiom. For the step one observes that $\neg\ul{w\nleq t\lor\forall_{v<w}B}$ is the same formula as $\ul{w\leq t\land\neg\forall_{v<w}B}$. Since we can introduce a conjunction in a box and in view of the proof
\begin{equation*} 
\begin{bprooftree}
\AxiomC{$w\leq t,Sw\nleq t$}
\UnaryInfC{$\ul{w\leq t},\ul{Sw\nleq t}$}
\UnaryInfC{$\ul{w\leq t},\ul{(w\nleq t\lor\forall_{v<w}B)[w:=Sw]}$}
\UnaryInfC{$\ul{w\leq t},\ul{(w\nleq t\lor\forall_{v<w}B)}[w:=Sw]$}
\end{bprooftree}
\end{equation*}
it remains to deduce the sequent
\begin{equation*}
 \neg A,\ul{\neg\forall_{v<w}B},\ul{w\nleq t\lor\forall_{v<w}B}[w:=Sw].
\end{equation*}
This sequent can be proved as follows. Note that the rule marked (Ax) arises from an axiom of bounded logic. The right initial sequent comes from the induction hypothesis:
\begin{equation*}
\hspace{-1cm}
\scalebox{.9}{
\begin{bprooftree}
\AxiomC{$\neg\ul{\forall_{v<w}B},\ul{\forall_{v<w}B}$}
\AxiomC{$w<t\land\neg B[v:=w],Sw\nleq t,B[v:=w]$}
\LeftLabel{($\exists$)}
\UnaryInfC{$\neg A,Sw\nleq t,B[v:=w]$}
\AxiomC{$\neg B[v:=w],\ul B[v:=w]$}
\BinaryInfC{$\neg A,Sw\nleq t,\ul B[v:=w]$}
\UnaryInfC{$\neg A,Sw\nleq t,\ul{B[v:=w]}$}
\LeftLabel{(Ax)}
\BinaryInfC{$\neg A,Sw\nleq t,\neg\ul{\forall_{v<w}B},\ul{\forall_{v<Sw}B}$}
\UnaryInfC{$\neg A,\ul{Sw\nleq t},\neg\ul{\forall_{v<w}B},\ul{\forall_{v<Sw}B}$}
\UnaryInfC{$\neg A,\neg\ul{\forall_{v<w}B},\ul{(w\nleq t\lor\forall_{v<w}B)[w:=Sw]}$}
\UnaryInfC{$\neg A,\neg\ul{\forall_{v<w}B},\ul{(w\nleq t\lor\forall_{v<w}B)}[w:=Sw]$}
\end{bprooftree}}
\end{equation*}
This finishes the induction. The established result entails a new admissible rule for the theory $\idelta_0$: Namely, we can now box and un-box arbitrary formulas. In fact, this is even allowed ``in the scope of substitution instructions'', for the sequent $\neg A[x:=t],\ul A[x:=t]$ is provable as well. It follows that the theory $\isigma_n$ is conservative over the theory $\isigma_n'$: Indeed, an $\isigma_n'$-proof is an $\isigma_n$-proof until we hit an induction rule
\begin{equation*}
\begin{bprooftree}
\AxiomC{$\Gamma,A[x:=0]$}
\AxiomC{$\Gamma,\neg A,A[x:=Sx]$}
\LeftLabel{(Ind)}
\RightLabel{($x$ not free in $\Gamma$)}
\BinaryInfC{$\Gamma,A[x:=t]$}
\end{bprooftree}
\end{equation*}
where $A$ is a $\Sigma_n'$-formula. Since $\ul A$ is a $\Sigma_n$-formula we can replace the rule by an induction rule of the theory $\isigma_n$, as follows:
\begin{equation*}
\begin{bprooftree}
\AxiomC{$\Gamma,A[x:=0]$}
\UnaryInfC{$\Gamma,\ul A[x:=0]$}
\AxiomC{$\Gamma,\neg A,A[x:=Sx]$}
\UnaryInfC{$\Gamma,\neg\ul A,\ul A[x:=Sx]$}
\LeftLabel{(Ind)}
\RightLabel{($x$ not free in $\Gamma$)}
\BinaryInfC{$\Gamma,\ul A[x:=t]$}
\UnaryInfC{$\Gamma,A[x:=t]$}
\end{bprooftree}
\end{equation*}
If the end-sequent of a proof consists of a single $\Sigma_1'$-formula then we can turn it into an equivalent $\Sigma_1$-formula by boxing it. The $\Sigma_1'$-reflection principle over the theory $\isigma_n'$ is thus reduced to the $\Sigma_1$-reflection principle over the theory $\isigma_n$.\\
Let us conclude with two minor changes to the finite proof system. The first concerns the cut rule: The free-cut elimination theorem (see e.g.\ \cite[Section 2.4.6]{buss-introduction-98}), which is provable in the theory $\isigma_1$, transforms a given $\isigma_n$-proof into an $\isigma_n$-proof where all cut formulas are prime formulas or in the class $\Sigma_n\cup\Pi_n$. We may just as well assume that any $\isigma_n$-proof has this property from the outset. Finally, when we combine two proofs by a binary rule it can of course happen that these proofs have different heights. We can avoid this if we fill the shorter proof with repetition rules of the following form:
\begin{equation*}
\begin{bprooftree}
\AxiomC{$\Gamma$}
\LeftLabel{(Rep)}
\UnaryInfC{$\Gamma$}
\end{bprooftree}
\end{equation*}
We can now give an official definition of the sets $\zet_n$ of \textbf{finite proofs} in the theories $\isigma_n$, with $n\geq 1$.

\begin{definition}\label{def:finite-proofs}
By ``formula" we mean a formula in the extended language above, i.e.\ containing the relation symbols $R_A^{\vec x}$. For each $n\geq 1$ we give an inductive definition of a set $\zet_n$ of terms typed by sequents of such formulas. The height of a term is the (finite) stage at which it appears in the inductive definition:
\begin{description}
\item[$(\ax)$] If the sequent $\Gamma$ contains a logical axiom, an elementary axiom or an axiom of bounded logic then $\ax^\Gamma$ is a term in $\zet_n$.
\item[$(\rep)$] If $\Gamma$ is a sequent and we have $d_0^{\Gamma_0}\in\zet_n$ with $\Gamma_0\subseteq\Gamma$ then $(\rep\, d_0^{\Gamma_0})^\Gamma$ is a term in $\zet_n$.
\item[$(\land)$] If the sequent $\Gamma$ contains the formula $A\equiv A_0\land A_1$ and we have $d_0^{\Gamma_0},d_1^{\Gamma_1}\in\zet_n$ of the same height with $\Gamma_0\subseteq\Gamma\cup\{A_0\}$ and $\Gamma_1\subseteq\Gamma\cup\{A_1\}$ then $(d_0^{\Gamma_0}\land_A d_1^{\Gamma_1})^\Gamma$ is a term in $\zet_n$.
\item[$(\lor)$] If the sequent $\Gamma$ contains the formula $A\equiv A_0\lor A_1$ and we have $d_0^{\Gamma_0}\in\zet_n$ with $\Gamma_0\subseteq\Gamma\cup\{A_i\}$ for some $i\in\{0,1\}$ then $(\lor_{i,A}d_0^{\Gamma_0})^\Gamma$ is a term in $\zet_n$.
\item[$(\forall)$] Assume that the sequent $\Gamma$ contains the formula $A\equiv\forall_x A_0(x)$, and that the variable $v$ is not free in $\Gamma$. If we have $d_0^{\Gamma_0}\in\zet_n$ with $\Gamma_0\subseteq\Gamma\cup\{A_0(v)\}$ then $(\forall_{v,A}d_0^{\Gamma_0})^\Gamma$ is a term in $\zet_n$.
\item[$(\exists)$] If the sequent $\Gamma$ contains the formula $A\equiv\exists_x A_0(x)$ and we have $d_0^{\Gamma_0}\in\zet_n$ with $\Gamma_0\subseteq\Gamma\cup\{A_0(t)\}$, where $t$ is some (object) term all variables of which are free in $\Gamma$, then $(\exists_{t,A}d_0^{\Gamma_0})^\Gamma$ is a term in $\zet_n$.
\item[$(\cut)$] If $\Gamma$ is a sequent and we have $d_0^{\Gamma_0},d_1^{\Gamma_1}\in\zet_n$ of the same height and with $\Gamma_0\subseteq\Gamma\cup\{\neg A\}$ and $\Gamma_1\subseteq\Gamma\cup\{A\}$, where $A$ is some formula in $\bigcup_{k\leq n}\Sigma_k$ all free variables of which appear in $\Gamma$, then $(d_0^{\Gamma_0}\scut_A d_1^{\Gamma_1})^\Gamma$ is a term in $\zet_n$.
\item[$(\ind)$] Assume that the sequent $\Gamma$ contains the $\Sigma_n$-formula $A[v:=t]$, that the variable $v$ is not free in $\Gamma$, and that all free variables of $t$ appear in $\Gamma$. If we have $d_0^{\Gamma_0},d_1^{\Gamma_1}\in\zet_n$ of the same height with $\Gamma_0\subseteq\Gamma\cup\{A[v:=0]\}$ and $\Gamma_1\subseteq\Gamma\cup\{\neg A,A[v:=Sv]\}$ then $(d_0^{\Gamma_0}\sind_{v,t,A} d_1^{\Gamma_1})^\Gamma$ is a term in $\zet_n$.
\end{description}
We write $\height(d)$ for the \textbf{height} of $d\in\zet_n$. The type $\Gamma$ of a term $d^\Gamma\in\zet_n$ is also called the \textbf{end-sequent} of $d$. We write $\zet_n^0$ for the set of those $d\in\zet_n$ which have closed end-sequent. The \textbf{term depth} $\dterm(d)$ of $d\in\zet_n$ is the maximal depth of an object term $t$ that appears as an index of a rule $\exists_{t,A}$ or $\sind_{v,t,A}$ used in the construction of $d$. The \textbf{cut-rank} $\dcut(d)$ of $d\in Z_n$ is $n$ if an induction rule is used in the construction of $d$. Otherwise it is the maximal $k\leq n$ such that the construction of $d$ involves a rule $\scut_A$ with $A\in\Sigma_k$. Given a proof $d^\Gamma\in\zet_n$, a variable $v$ and a numeral $m$ one can \textbf{substitute} appropriate occurrences of $v$ by $m$, to obtains a proof $d[v:=m]^{\Gamma[v:=m]}\in\zet_n$ with $\height(d[v:=m])=\height(d)$, $\dcut(d[v:=m])=\dcut(d)$ and $\dterm(d[v:=m])\leq\dterm(d)+m$.
\end{definition}

\section{The Infinite Proof System}

The infinite proof system we use is based on \cite{buchholz-wainer-87}. An important modification is due to \cite{rathjen-carnielli-91}: To make cut-elimination efficient for small ordinals one has to admit exponential shifts in the index of the accumulation rule. We will eventually formalize the infinite system in the style of \cite{buchholz91}: To model infinite proofs by finite terms one exhibits (primitive recursive) functions which describe the rule, the end-sequent, the ordinal tag, and the immediate subtrees (as terms) of the infinite proof tree associated with a term. Before we switch to term notations, let us present the modified infinite system in a more informal style: As usual, the sequents deduced by the infinite proof system are finite sets of closed formulas. Note that any closed term is a numeral. The formulas of the infinite system fall into two classes: First, we have the formulas of the finite proof systems (see Definition \ref{def:finite-proofs}). Additionally, we now allow the \textbf{special formulas} $n\in N$ and $n\notin N$, one of which is the negation of the other. We consider $n\in N$ (but not $n\notin N$) as a formula in the class $\Delta_0$. Note, however, that the special formulas may not appear as building blocks of compound formulas. In the meta-theory, we will eventually interpret $N$ as a finite set of natural numbers. One should think of all quantifiers as relativized to the set $N$. Next, we remark that the accumulation rule of \cite{buchholz-wainer-87} relies on a \textbf{step-down relation} between ordinals. We need to assume that any limit ordinal $\lambda<\varepsilon_0$ is associated with a \textbf{fundamental sequence}, i.e.\ a strictly increasing sequence $(\{\lambda\}(n))_{n\in\mathbb N}$ with supremum $\lambda$. The precise definition of fundamental sequences vary, and we adopt the version of \cite{buchholz-wainer-87}. It is shown in \cite{sommer95} that the ternary relation $\{\lambda\}(n)=\alpha$ can be defined by a $\Delta_0$-formula, and that the function $(\lambda,n)\mapsto\{\lambda\}(n)$ is $\idelta_0(\exp)$-provably total. It is convenient to extend the notation to successor ordinals and zero by setting $\{\alpha+1\}(n):=\alpha$ and $\{0\}(n):=0$. We write $\alpha\leq_k\beta$ if there is a sequence $(\alpha_0,\dots ,\alpha_n)$ of ordinals with $\beta=\alpha_0$, $\alpha=\alpha_n$ and $\alpha_{m+1}=\{\alpha_m\}(k)$ for all $m<n$. Slightly deviating from \cite{buchholz-wainer-87} we define
\begin{equation*}
k(\Gamma):=\max(\{1\}\cup\{n\,|\,\text{the formula $n\notin N$ appears in the sequent $\Gamma$}\}).
\end{equation*}
Somewhat unusually we set
\begin{equation*}
3_{0+}^k:=k\quad\text{and}\quad 3_{1+}^k:=3^{k+1}.
\end{equation*}
The accumulation rule can now be given as
\begin{equation*}
 \begin{bprooftree}
\AxiomC{$\vdash^\alpha\Gamma$}
\LeftLabel{($\acc^i$)}
\RightLabel{(if we have $\alpha+1\leq_{ 3_{+i}^{k(\Gamma)}}\beta$, with $i\in\{0,1\}$)}
\UnaryInfC{$\vdash^\beta\Gamma$}
\end{bprooftree}
\end{equation*}
Note that we have $\alpha+1$ where \cite{buchholz-wainer-87} has $\alpha$ (and then $<_{k(\Gamma)}$ instead of $\leq_{k(\Gamma)}$). This will slightly simplify the bounding lemma, because it leads to \emph{strict} inequalities between certain fast-growing functions. Next, let us give the propositional rules and the cut rule. Compared with \cite{buchholz-wainer-87}, each of our rules is followed by an implicit application of accumulation. Otherwise, most proofs would end with an explicit accumulation rule, which would complicate the formalization.
\begin{gather*}\addtolength{\jot}{5pt}
\begin{bprooftree}
\AxiomC{$\vdash^\alpha\Gamma,A_j$}
\LeftLabel{($\lor_{j,A_0\lor A_1}^i$)}
\RightLabel{(if we have $\alpha+1\leq_{ 3_{i+}^{k(\Gamma,A_0\lor A_1)}}\beta$)}
\UnaryInfC{$\vdash^\beta\Gamma,A_0\lor A_1$}
\end{bprooftree}\\
\begin{bprooftree}
\AxiomC{$\vdash^\alpha\Gamma,A_0$}
\AxiomC{$\vdash^\alpha\Gamma,A_1$}
\LeftLabel{($\land_{A_0\land A_1}^i$)}
\RightLabel{(if we have $\alpha+1\leq_{ 3_{i+}^{k(\Gamma,A_0\land A_1)}}\beta$)}
\BinaryInfC{$\vdash^\beta\Gamma,A_0\land A_1$}
\end{bprooftree}\\
\begin{bprooftree}
\AxiomC{$\vdash^\alpha\Gamma,\neg A$}
\AxiomC{$\vdash^\alpha\Gamma,A$}
\LeftLabel{($\cut_A^i$)}
\RightLabel{(if we have $\alpha+1\leq_{ 3_{i+}^{k(\Gamma)}}\beta$, and $A\in\Sigma_k$ for some $k$)}
\BinaryInfC{$\vdash^\beta\Gamma$}
\end{bprooftree}
\end{gather*}
The quantifier rules need to be modified in comparison with \cite{buchholz-wainer-87}, because we view all quantifiers as relativized to the special symbol $N$, which will be necessary to invert on $\Sigma_1$-cuts. Note in particular the summand $2$ in the existential rule. It is necessary because inverting on a relativized universal quantifier adds two formulas (the formula which relativizes the quantified variable, in addition to the matrix of the universal formula), so in the reduction lemma we will need room for two cuts.
{\addtolength{\jot}{5pt}
\begin{gather*}
\begin{bprooftree}
\AxiomC{$\vdash^\alpha\Gamma,m\in N$}
\AxiomC{$\vdash^\alpha\Gamma,A(m)$}
\LeftLabel{($\exists_{m,\exists_xA(x)}^i$)}
\RightLabel{(if we have $\alpha+2\leq_{3_{i+}^{k(\Gamma,\exists_x A(x))}}\beta$)}
\BinaryInfC{$\vdash^\beta\Gamma,\exists_x A(x)$}
\end{bprooftree}\\
\begin{bprooftree}
\AxiomC{$\{\vdash^\alpha\Gamma,m\notin N,A(m)\}_{m\in\mathbb N}$}
\LeftLabel{($\omega_{\forall_x A(x)}^i$)}
\RightLabel{(if we have $\alpha+1\leq_{3_{i+}^{k(\Gamma,\forall_x A(x))}}\beta$)}
\UnaryInfC{$\vdash^\beta\Gamma,\forall_x A(x)$}
\end{bprooftree}
\end{gather*}}
Finally, we have the \textbf{truth axioms}
\begin{equation*}
\vdash^\alpha A\text{ (for $A$ a true prime formula not of the form $n\in N$ or $n\notin N$)}
\end{equation*}
and the \textbf{$N$-axioms}
\begin{equation*}
\vdash^\alpha\Gamma,n\in N,n\notin N\quad\text{and}\quad\vdash^\alpha\Gamma,n\notin N,(n+1)\in N
\end{equation*}
with an arbitrary ordinal $\alpha\geq 2$.\\
Our next goal is to build finite term notations which model (the necessary occurrences of) these rules, as in \cite{buchholz91}. Before we do so we will, however, develop a different viewpoint on the step-down relation $\alpha\leq_k\beta$. These \textbf{step-down arguments} do not seem to be strictly necessary, but they are nice because they give the proof system a more syntactic flavour:

\begin{definition}
Step-down arguments are terms typed by pairs of ordinals. They are inductively defined as follows:
\begin{description}
\item[$(\fund)$] Given $\alpha=\{\beta\}(m)$, the expression $\fund(m)^\beta_\alpha$ is a step-down argument.
\item[$(*)$] If $s^\beta_\alpha$ and $s^\gamma_\beta$ are step-down arguments then the expression $(s^\beta_\alpha*s^\gamma_\beta)^\gamma_\alpha$ is a step-down argument.
\item[$(+)$] Consider a step-down argument $s^\gamma_\beta$ and an ordinal $\alpha$ which meshes with $\gamma$ (i.e.\ we have $\alpha=0$ or else $\gamma<\omega^{\alpha_0+1}$ where $\alpha_0$ is the smallest exponent in the Cantor normal form of $\alpha$, see e.g.\ \cite[Definition 2.6]{rathjen13}). Then the expression $(\alpha+s^\gamma_\beta)^{\alpha+\gamma}_{\alpha+\beta}$ is a step-down argument.
\item[($\omega$)] If $s^\beta_\alpha$ is a step-down argument then so is $(\omega^{s^\beta_\alpha})^{\omega^\beta}_{\omega^\alpha}$.
\end{description}
The \textbf{base} $\stepba(s)$ of a step-down argument $s$ is the maximal number $m$ such that an expression of the form $\fund(m)^\beta_\alpha$ occurs in the construction of $s$. To read off the type of a step-down argument we use the notation $\stepto(s^\beta_\alpha)=\beta$ and $\stepbo(s^\beta_\alpha)=\alpha$. We introduce the following abbreviation:
\begin{equation*}
 s\vdash\alpha\leq_k\beta\quad:\Leftrightarrow\quad\stepto(s)=\beta\land\stepbo(s)=\alpha\land\stepba(s)\leq k
\end{equation*}
To express that $s\vdash\alpha\leq_k\beta$ holds we also say that $s$ is a step-down argument for $\alpha\leq_k\beta$.\\
When working with step-down arguments, we will omit ordinal tags which can be inferred. For example, $\fund(m)*\fund(m)^\alpha$ will stand for the step-down argument $(\fund(m)^{\{\alpha\}(m)}_{\{\{\alpha\}(m)\}(m)}*\fund(m)^\alpha_{\{\alpha\}(m)})^\alpha_{\{\{\alpha\}(m)\}(m)}$. For definiteness, let us agree that $s_0*s_1*s_2$ is bracketed as $(s_1*s_2)*s_3$. Only for the construct $\fund(\cdot)$ we use exponential notation to denote iterations: Thus, $\fund(m)*\fund(m)^\alpha$ is further abbreviated as $\fund^2(m)^\alpha$.
\end{definition}

All rules of the infinite system should now be reformulated in terms of step-down arguments. For example the introduction of a conjunction gets the following form:
\begin{equation*}
\begin{bprooftree}
\AxiomC{$\vdash^\alpha\Gamma,A_0$}
\AxiomC{$\vdash^\alpha\Gamma,A_1$}
\LeftLabel{($\land_{A_0\land A_1}^{i,s}$)}
\RightLabel{(if $s\vdash\alpha+1\leq_{3_{i+}^{k(\Gamma,A_0\land A_1)}}\beta$)}
\BinaryInfC{$\vdash^\beta\Gamma,A_0\land A_1$}
\end{bprooftree}
\end{equation*}
All other rules are modified in the same way. We need some properties of step-downs:

\begin{lemma}\label{lem:step-down-properties}
We have the following basic facts about step-down arguments:
\begin{enumerate}[label=(\alph*)]
\item From $s\vdash\alpha\leq _k\beta$ and $k'\geq k$ we can infer $s\vdash\alpha\leq_{k'}\beta$.
\item If there is a step-down argument $s$ with $\stepto(s)=\beta$ and $\stepbo(s)=\alpha$ then we have $\alpha\leq\beta$.
\item If there is a step-down argument $s$ with $\stepto(s)=\omega\cdot p+q$, $\stepbo(s)=\omega\cdot p'+q'$ and $\stepba(s)\leq k$ then we have either $p'=p$ and $q'\leq q$ or else $p'<p$ and $q'\leq k+1$.
\end{enumerate}
\end{lemma}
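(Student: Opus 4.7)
My plan is structural induction on $s$ for (b) and (c), and direct inspection for (a).

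Part (a) is immediate: $s \vdash \alpha \leq_k \beta$ unwinds to $\stepto(s) = \beta$, $\stepbo(s) = \alpha$ and $\stepba(s) \leq k$; replacing $k$ by any $k' \geq k$ obviously preserves the last conjunct while leaving the first two untouched, since the base of $s$ depends only on $s$. Part (b) is induction on $s$. The $\fund(m)^\beta_\alpha$ base case is the standard property $\{\beta\}(m) \leq \beta$ of the Buchholz--Wainer fundamental sequence (with the convention $\{0\}(m) = 0$). The $*$ case is transitivity of $\leq$; the $+$ case uses right-monotonicity of ordinal addition; and the $\omega$ case uses monotonicity of $\omega^{(-)}$.

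For part (c), I again induct on $s$. In the $\fund(m)^\beta_\alpha$ case with $\beta = \omega \cdot p + q$ and $m \leq k$: if $\beta = 0$ everything is zero and the first alternative holds; if $q > 0$ then $\beta$ is a successor with $\{\beta\}(m) = \omega \cdot p + (q-1)$, giving the first alternative; if $q = 0$ and $p = p'' + 1$ then $\beta = \omega \cdot p'' + \omega$ and $\{\beta\}(m) = \omega \cdot p'' + m$, so $p' = p'' < p$ and $q' = m \leq k \leq k+1$; if $q = 0$ and $p$ is itself a limit, the convention yields $\{\beta\}(m) = \omega \cdot \{p\}(m)$, so $p' = \{p\}(m) < p$ and $q' = 0$. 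Either way the second alternative holds. For $s_1 * s_2$, I apply the IH to each and chase the four combinations: if both hypotheses preserve the $\omega$-part, so does the composition with $q' \leq q$; in every other combination at least one step strictly decreases the $\omega$-part, so $p' < p$ overall, and the outermost (bottom-most) IH gives the required bound $q' \leq k+1$.

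In the $(\alpha + s^\gamma_\beta)$ case I distinguish whether the least Cantor-exponent $\alpha_0$ of $\alpha$ is zero. If $\alpha_0 \geq 1$ (including $\alpha = 0$ vacuously), writing $\alpha = \omega \cdot P$ (so $\alpha$ has no finite tail), left-distributivity gives $\alpha + \gamma = \omega \cdot (P + p_\gamma) + q_\gamma$ and $\alpha + \beta = \omega \cdot (P + p_\beta) + q_\beta$; the IH for $s$ transfers through, using right-strictness of ordinal addition to preserve the strict inequality on the $\omega$-part. If $\alpha_0 = 0$, the meshing condition forces $\gamma < \omega$, hence $p_\gamma = p_\beta = 0$ by (b), and writing $\alpha = \omega \cdot P + Q$ with $Q < \omega$ we get $\alpha + \gamma = \omega \cdot P + (Q + q_\gamma)$ and similarly for $\beta$, so the first alternative holds trivially. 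For the $(\omega^s)$ case with $s \vdash \alpha \leq_k \beta$, invoke (b): if $\alpha \geq 1$ then $\beta \geq 1$ and both $\omega^\alpha$, $\omega^\beta$ are multiples of $\omega$ (i.e.\ $q = q' = 0$), and $\omega^\alpha \leq \omega^\beta$ with equality iff $\alpha = \beta$, giving one of the two alternatives trivially; if $\alpha = 0$ then $q' = 1 \leq k+1$ (using $k \geq 1$ is not needed; $k \geq 0$ suffices) and $p' = 0$, from which either $\beta = 0$ (first alternative) or $\beta \geq 1$ and $p \geq 1 > p'$ (second alternative).

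The main obstacle is the $\fund$ case, which forces one to unpack the precise conventions for fundamental sequences of limits $\omega \cdot p$ depending on whether $p$ is a successor or limit; the $(+)$ case is also a little delicate because the meshing condition is what ensures that $\omega \cdot p + q$-decompositions add cleanly, and one must justify that strict inequality $p_\beta < p_\gamma$ lifts to $P + p_\beta < P + p_\gamma$ via right-strict monotonicity of ordinal addition.
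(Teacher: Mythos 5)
Your proof is correct and follows essentially the same route as the paper: direct inspection for (a), structural induction on step-down arguments for (b) and (c), with the same four-way case chase in the $*$ step and the same decomposition in the $+$ step. The one small slip is in the $(\fund)$ base case of (c), where the Buchholz--Wainer convention actually gives $\{\omega\cdot(p''+1)\}(m)=\omega\cdot p''+(m+1)$ rather than $\omega\cdot p''+m$; this off-by-one is harmless because $m\leq\stepba(s)\leq k$ still yields $q'=m+1\leq k+1$, but it is precisely what makes the bound $k+1$ (rather than $k$) the right one in the statement.
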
 
We remark that (a) will ensure that the infinite proof system behaves well with respect to weakening (cf.\ the comment directly before \cite[Definition 5]{buchholz-wainer-87}).
\begin{proof}
Part (a) follows immediately from the definition. Parts (b) and (c) are shown by structural induction on step-down arguments (with fixed $k$ in the case of (c)). Concerning (b), the case $(\fund)$ holds because fundamental sequences approximate ordinals from below. The steps follow from the transitivity of the ordering and the monotonicity of the functions $\gamma\mapsto\alpha+\gamma$ and $\gamma\mapsto\omega^\gamma$. Coming to (c), note first that by (b) we can only have $\omega\cdot p+q=0$ if we also have $\omega\cdot p'+q'=0$, in which case the claim is true. So assume now $\omega\cdot p+q>0$. The case $s=\fund(m)^{\omega\cdot p+q}_{\omega\cdot p'+q'}$ with $m=\stepba(s)\leq k$ holds in view of
\begin{equation*}
\{\omega\cdot p+q\}(m)=\begin{cases}
\omega\cdot p +(q-1) & \text{if $q>0$},\\
\omega\cdot (p-1)+(m+1) & \text{otherwise}.
\end{cases}
\end{equation*}
Next, assume that $s$ is of the form $s_0*s_1$. Using (b) we have
\begin{equation*}
 \stepbo(s_1)\leq\stepto(s_1)=\stepto(s)<\omega^2,
\end{equation*}
so that $\stepto(s_0)=\stepbo(s_1)$ is also of the form $\omega\cdot p''+q''$. Since $\stepba(s_0)$ and $\stepba(s_1)$ are both bounded by $\stepba(s)$ we can apply the induction hypothesis to $s_0$ and $s_1$. This leads to four different cases, all of which are easily checked.\\
Now assume that $s$ is of the form $\alpha+s_0$. First, $\alpha+\stepto(s_0)=\stepto(s)=\omega\cdot p+q$ implies that $\alpha$ and $\stepto(s_0)$ are of the form $\alpha=\omega\cdot p_\alpha+q_\alpha$ and $\stepto(s_0)=\omega\cdot p_0+q_0$. Since $\alpha$ and $\stepto(s_0)$ must mash we have $p=p_\alpha+p_0$ and $q=q_\alpha+q_0$, and either $q_\alpha=0$ or $p_0=0$. Using (b) we learn that $\stepbo(s_0)$ is also of the form $\beta=\omega\cdot p_1+q_1$ and that $\alpha$ mashes with $\stepbo(s_0)$, which implies $p'=p_\alpha+p_1$ and $q'=q_\alpha+q_1$. Furthermore the induction hypothesis tells as that we have either $p_1=p_0$ and $q_1\leq q_0$ or $p_1< p_0$ and $q_1\leq k+1$. This leaves us with four cases to check: If e.g.\ we have $q_\alpha=0$ as well as $p_1=p_0$ and $q_1\leq q_0$, then we can conclude $p'=p_\alpha+p_1=p_\alpha+p_0=p$ and $q'=q_1\leq q_0=q$. The other cases are left to the reader.\\
Finally, assume that $s$ is of the form $\omega^{s_0}$. Then $\omega^{\stepto(s_0)}=\stepto(s)<\omega^2$ implies $\stepto(s_0)\leq 1$. In view of (b) the only non-trivial case amounts to $\stepto(s_0)=1$ and $\stepbo(s_0)=0$. There we have $\stepto(s)=\omega$ and $\stepbo(s)=1$, i.e.\ $p'=0<1=p$ and $q'=1\leq k+1$.
\end{proof}

We will need the following constructions of step-downs:

\begin{lemma}\label{lem:transformations-step-downs}
The following constructions and transformations of step-down arguments are available:
\begin{enumerate}[label=(\alph*)]
\item For any ordinal $\alpha$ there is a step-down argument $\id^\alpha\vdash\alpha\leq_0\alpha$.
\item For any $\alpha<\varepsilon_0$ there is a step-down argument $s^\alpha_0\vdash0\leq_0\alpha$. If we have $\alpha\geq 1$ then there is a step-down argument $s^\alpha_1\vdash1\leq_0\alpha$. In case $\alpha\geq 2$ there is a step-down argument $s^\alpha_2\vdash2\leq_1\alpha$.
\item Assume $\alpha,\beta<\omega^2$. A step-down argument $s\vdash\alpha+1\leq_k\beta$ can be transformed into a step-down argument $3_{\cdot 2+1}^s\vdash 3^\alpha\cdot 2+1\leq_{3^{k+1}}3^\beta$ and a step down argument $3_{+1}^s\vdash 3^\alpha+1\leq_{3^{k+1}}3^\beta$. An argument $s\vdash\alpha+2\leq_k\beta$ can be transformed into an argument $3_{+2}^s\vdash3^\alpha+2\leq_{3^{k+1}}3^\beta$.
\item Assume $k\geq 2$. A step down argument $s\vdash\alpha+1\leq_k\beta$ can be transformed into a step-down argument $\omega_{\cdot 2+1}^s\vdash\omega^\alpha\cdot 2+1\leq_k\omega^\beta$ and a step-down argument $\omega_{+1}^s\vdash\omega^\alpha+1\leq_k\omega^\beta$. A step-down argument $s\vdash\alpha+2\leq_k\beta$ can be transformed into a step-down argument $\omega_{+2}^s\vdash\omega^\alpha+2\leq_k\omega^\beta$.
\end{enumerate}
We will assume that $3_{\cdot 2+1}^s$ etc.\ are step-down arguments even when the assumptions are violated (say, because the involved ordinals are bigger than $\omega^2$). For definiteness, let us agree that $3_{\cdot 2+1}^s$ denotes the step-down argument $\id^0$ in this case.
\end{lemma}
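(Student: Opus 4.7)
For part (a), let $\id^0 := \fund(0)^0_0$, which is well-formed because $\{0\}(0) = 0$ by the extended convention, and for $\alpha > 0$ set $\id^\alpha := \alpha + \fund(0)^0_0$; the meshing condition for the $(+)$-constructor is trivial since $\stepto(\fund(0)^0_0) = 0$, and both constructions have base $0$. For part (b), I would recurse on the Cantor normal form of $\alpha$: for $s_0^\alpha$ take $\id^0$ at $\alpha = 0$ and $s_0^{\{\alpha\}(0)} * \fund(0)^\alpha_{\{\alpha\}(0)}$ otherwise, which keeps the base at $0$. For $s_1^\alpha$ the base case $\alpha = 1$ is $\id^1$, and the recursion step uses that the Buchholz--Wainer convention yields $\{\alpha\}(0) \geq 1$ for all $\alpha \geq 2$ (a small case analysis on whether $\alpha$ is successor or limit). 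For $s_2^\alpha$, one additionally allows $\fund(1)$ to bridge the unique gap where $\{\alpha\}(0) = 1$, keeping the base at $1$.

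For parts (c) and (d), the new step-down argument is constructed by structural induction on $s$, dispatching on its outer constructor. In the case $s = s_0 * s_1$, apply the transformation to each factor and compose via $*$; the middle types line up automatically. In the case $s = \gamma + s_0$, transform $s_0$ recursively to obtain $s_0'$ and prefix with $3^\gamma$ (resp.\ $\omega^\gamma$) via the $(+)$-constructor; meshing in the result follows from meshing in $s$ together with Lemma \ref{lem:step-down-properties}(c). In the case $s = \omega^{s_0}$, the hypothesis $\alpha,\beta<\omega^2$ forces $\stepto(s_0)\leq 1$, which leaves a handful of explicit small cases (and otherwise we fall back to the default $\id^0$ convention the paper fixes). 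The remaining atomic case is $s = \fund(m)^\beta_{\alpha+1}$ with $m \leq k$, where one must exhibit a chain of step-downs directly from $3^\beta$ down to $3^\alpha \cdot 2 + 1$ (resp.\ $3^\alpha + 1$, $3^\alpha + 2$, or their $\omega$-variants).

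This $\fund$-case is the main obstacle. For (c), given $\alpha + 1 = \{\beta\}(m)$, one splits on whether $\beta$ is a successor (so $\beta = \alpha + 2$ and $3^\beta$ has $3^\alpha$ as its leading factor with finite coefficient $9$) or a limit (so the fundamental sequence of $3^\beta$ lands in an ordinal of the form $3^\gamma \cdot c$ for appropriate $\gamma \geq \alpha$). In either subcase the descent from $3^\beta$ to the required target is realised by a finite, $\alpha$-dependent composition of $\fund$-applications that peel off the finite coefficient attached to the leading $3^\alpha$ one unit at a time, interleaved with the evaluation of $\{3^\alpha\}$ at small indices when lower-order limits are crossed. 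The base bound grows from $k$ to $3^{k+1}$ precisely because the first fundamental-sequence step must use index at least $3^{k+1}$ to unfold $\{3^\beta\}$ into a sufficiently large finite coefficient at the $3^\alpha$-level, after which the remainder of the descent can proceed with $\fund(0)$. For (d), the analogous programme works with $\omega^\gamma$ in place of $3^\alpha$, and here the hypothesis $k \geq 2$ is exactly what ensures that $\{\omega^{\gamma+1}\}(k) = \omega^\gamma \cdot (k+1)$ already has at least three $\omega^\gamma$-summands, giving enough room to isolate the extra $+1$ or $+2$ on top of $\omega^\gamma \cdot 2$ or $\omega^\gamma$ without enlarging the base. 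The entire recursion is primitive recursive in the term $s$, so the assignment $s \mapsto 3_{\cdot 2+1}^s$ and its relatives are definable in $\isigma_1$.
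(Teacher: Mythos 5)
Your part (a) matches the paper, and the rough strategy for part (b) (recursion on the Cantor normal form) is in the right spirit, though the particular recursion $s_0^\alpha := s_0^{\{\alpha\}(0)} * \fund(0)^\alpha_{\{\alpha\}(0)}$ recurses on the \emph{value} $\{\alpha\}(0)$ rather than on the syntactic structure of $\alpha$; making that a primitive-recursive, $\isigma_1$-provably terminating recursion needs a bound on the number of iterations, which is precisely what Lemma~\ref{lem:all-step-downs-to-zero} supplies later -- so you would be borrowing against a future result. The paper avoids this entirely by recursing on the (finite) Cantor normal form of $\alpha$, peeling off the last summand and using the $(\omega)$-constructor to step down $\omega^{\alpha_0}\to\omega\to 1$.

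The real problem is in parts (c) and (d). You propose a structural induction on $s$ with the $*$-case handled by ``apply the transformation to each factor and compose via $*$; the middle types line up automatically.'' They do not. Suppose $s=s_0*s_1$ with $s_1\vdash\gamma\leq_k\beta$ and $s_0\vdash\alpha+1\leq_k\gamma$. Transforming $s_0$ gives a step-down ending at $3^\alpha\cdot 2+1$ and \emph{starting at $3^\gamma$}; transforming $s_1$ (assuming $\gamma=\gamma'+1$ is even a successor, which is not guaranteed) gives a step-down ending at $3^{\gamma'}\cdot 2+1$. Since $3^{\gamma'}\cdot 2+1 < 3^{\gamma'+1}=3^\gamma$, there is no step-down bridging these two endpoints -- you would have to go \emph{up} from $3^{\gamma'}\cdot 2+1$ to $3^\gamma$, which step-down arguments cannot do. So the recursion breaks at exactly the composition case. (The $(+)$- and $(\omega)$-cases also carry hidden assumptions: in the $(\omega)$-case one only gets $\stepto(s_0)\leq 1$ from $\stepto(s)<\omega^2$, not that the subterm $s_0$ witnesses a successor step-down of the form required by the induction hypothesis.)

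The paper's proof of (c) is not an induction on $s$ at all. It reads off $\alpha=\omega\cdot p'+q'$ and $\beta=\omega\cdot p+q$ and appeals to Lemma~\ref{lem:step-down-properties}(c) -- proved \emph{about} step-down arguments, not by transforming them -- to conclude that either $p'=p$ and $q'+1\leq q$, or $p'<p$ and $q'\leq k$. It then builds a \emph{fresh} step-down from scratch, using $3^{\omega\cdot p+q}=\omega^p\cdot 3^q$, chaining copies of the arguments $s^{\omega^p}_0$ and $s^{\omega^p}_1$ from (b) to peel off $\omega^p$-summands, and (in the second case) a single $\fund(3^{q'}\cdot 2)$-step whose index is bounded by $3^{k+1}$; that single step is the sole source of the base blow-up $k\mapsto 3^{k+1}$. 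For (d) the paper likewise bypasses any induction on $s$: a single application of the $(\omega)$-constructor, $\omega^s$, descends $\omega^\beta\to\omega^{\alpha+1}$ directly, followed by $\fund(2)^{\omega^{\alpha+1}}$ and $\omega^\alpha\cdot 2+s^{\omega^\alpha}_1$, and the hypothesis $k\geq 2$ is used only to justify $\fund(2)$. Your $\fund$-base-case analysis contains the right arithmetic intuitions, but the surrounding structural recursion that is supposed to reduce to that case is unsound.
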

\begin{proof}
(a) In view of $\{0\}(0)=0$ we can set $\id^0:=\fund(0)^0_0$. Since any ordinal meshes with $0$ we can take $\id^\alpha:=\alpha+\id^0$ for all $\alpha>0$.\\
(b) Let us first construct $s^\alpha_1$. This is done by induction on the Cantor normal form of $\alpha$ (with repetition of the same summand rather than coefficients). Start with the case where there is only one summand. If this summand is $\omega^0$ then we have $\alpha=1$ and we can take $s^\alpha_1:=\id^1$. If the summand is of the form $\omega^\alpha$ with $\alpha>0$ we can apply the induction hypothesis to get a step-down argument $s^\alpha_1\vdash1<_0\alpha$. Then $\fund(0)*\omega^{s^\alpha_1}$ steps down from $\omega^\alpha$ via $\omega$ to $1$. If the Cantor normal form of $\alpha$ has more than one summand then we split off the last summand as $\alpha=\alpha'+\omega^{\alpha_0}$. Since $s^{\alpha'}_1$ steps down from $\alpha'$ to $1$ it suffices to step down from $\alpha$ to $\alpha'$. If we have $\alpha_0=0$ then this is achieved by $\fund(0)^\alpha$, otherwise by $\alpha'+(\fund^2(0)*\omega^{s^{\alpha_0}_1})$. Coming to $s^\alpha_0$, the case $\alpha=0$ is trivial. Otherwise one takes $s^\alpha_0=\fund(0)*s^\alpha_1$. Finally, $s^\alpha_2$ is easily constructed when $\alpha$ is a finite ordinal. Otherwise we split off the biggest summand of the Cantor normal form as $\alpha=\omega^{\alpha_0}+\alpha'$. Since $\alpha$ is infinite we must have $\alpha_0>0$. Then $\omega^{s^{\alpha_0}_1}*(\omega^{\alpha_0}+s^{\alpha'}_0)$ steps down from $\alpha$ to $\omega$. By $\fund(1)^\omega$ we reach $2$.\\
(c) Recall that any ordinal below $\omega^2$ has the form $\omega\cdot p+q$ and that we have $3^{\omega\cdot p+q}=\omega^p\cdot 3^q$ (we can even take this as our definition of exponentiation). Let us show how to construct $3^s_{\cdot 2+1}$: From $s$ we can read off $\alpha=\omega\cdot p'+q'$ and $\beta=\omega\cdot p+q$. In view of Lemma \ref{lem:step-down-properties} the argument $s$ also ensures that we have either $p'=p$ and $q'+1\leq q$ or else $p'<p$ and $q'\leq k$. In the first case $3^s_{\cdot 2+1}$ arises as the following composition of step-downs: First, one steps down from $3^\beta=\omega^p\cdot 3^q$ to $\omega^p\cdot (3^{q'}\cdot 2+1)$. In view of $3^{q'}\cdot 2+1\leq 3^{q'+1}\leq 3^q$ this can be achieved by repeated application of the argument $s^{\omega^p}_0$ from (b):
\begin{equation*}
(\omega^p\cdot (3^{q'}\cdot 2+1)+s^{\omega^p}_0)*(\omega^p\cdot (3^{q'}\cdot 2+2)+s^{\omega^p}_0)*\dots *(\omega^p\cdot (3^q-1)+s^{\omega^p}_0).
\end{equation*}
It only remains to descend from $\omega^p\cdot (3^{q'}\cdot 2+1)$ to $\omega^p\cdot 3^{q'}\cdot 2+1=3^\alpha\cdot 2+1$, which is possible by the argument $\omega^p\cdot 3^{q'}\cdot 2+s^{\omega^p}_1$. Now consider the second possibility, where we have $p'<p$ and $q'\leq k$: Similarly to the above one can step down from $3^\beta=\omega^p\cdot 3^q$ to $\omega^p$. In view of $\{\omega^{l+1}\}(0)=\omega^l$ we can then use the argument $\fund^{p-(p'+1)}(0)^{\omega^p}$ to descend from $\omega^p$ to $\omega^{p'+1}$. From there, the argument $\fund(3^{q'}\cdot 2)^{\omega^{p'+1}}$ descends to $\omega^{p'}\cdot(3^{q'}\cdot 2+1)$. Note that we have $3^{q'}\cdot 2\leq 3^{q'+1}\leq 3^{k+1}$, so that the described argument does indeed establish the relation $\leq_{3^{k+1}}$. Finally, one steps down from $\omega^{p'}\cdot(3^{q'}\cdot 2+1)$ to $\omega^{p'}\cdot3^{q'}\cdot 2+1=3^\alpha\cdot 2+1$ as in the first case. The arguments $3^s_{+1}$ and $3^s_{+2}$ are constructed similarly.\\
(d) The argument $\omega_{\cdot 2+1}^s$ arises as the following composition: Using $\omega^s$ one descends from $\omega^\beta$ to $\omega^{\alpha+1}$. From there, the argument $\fund(2)^{\omega^{\alpha+1}}$ leads to $\omega^\alpha\cdot 3$. By $\omega^\alpha\cdot 2+s^{\omega^\alpha}_1$ we reach $\omega^\alpha\cdot 2+1$. The arguments $\omega_{+1}^s$ and $\omega_{+2}^s$ are constructed similarly (for $\omega_{+2}^s$, treat $\alpha=0$ as a special case).
\end{proof}

An important insight of \cite{buchholz91} is that the rules of the infinite proof system do not have to be constructors of the terms which represent infinite proofs (in the case of the $\omega$-rule this would indeed make it impossible to stay in the realm of finite terms). Instead, these rules are read off from the term notations in a more subtle manner: For example, a finite proof term stands for the infinite proof as which it is embedded, and through this connection one can say that it ends with an $\omega$-rule. That being said, we will admit some rules as term constructors for the infinite system. This introduces redundancy but it will also make it easier to write down terms that correspond to the embedding and reduction lemma. Let us define the set $\zet^\infty_0$ of \textbf{term notations for infinite pre-proofs}:

\begin{definition}
The set $\zet^\infty_0$ consists of (untyped) terms built from the following constants and connectives:
\begin{description}
\item[($\lbrack\cdot\rbrack_M$)] A constant $[d]_M$ for each $d\in\zet_n^0$ (for some $n$) and each $M\in\mathbb N$.
\item[($\ax$)] A constant $\ax_\Gamma^\alpha$ for each $N$-axiom $\Gamma$ and each ordinal $\alpha\geq 2$.
\item[($\acc$)] A unary connective $\acc_{m,s}$ for each $m\in\mathbb N$ and each step-down $s$.
\item[($\cut$)] A binary connective $\cut_A$ for each formula $A$ in $\bigcup_{k\geq 0}\Sigma_k$ (recall that this allows $A\equiv\,n\in N$ but forbids $A\equiv\, n\notin N$).
\item[($\mathcal I$)] A unary connective $\mathcal I_{n,B}$ for each $n\in\mathbb N$ and each universal formula $B$.
\item[($\mathcal R$)] A binary connective $\mathcal R_A$ for each formula $A$ in $\bigcup_{k\geq 1}\Sigma_k$.
\item[($\mathcal E_0$)] A unary connective $\mathcal E_0$.
\item[($\mathcal E$)] A unary connective $\mathcal E$.
\end{description} 
\end{definition}

Let us briefly describe the ideas behind these constructors: The term $[d]_M$ refers to the embedding lemma. When we embed a finite proof we will have to weaken the end-sequent by some formulas of the form $n\notin N$. The index $M$ determines which of these formulas we add. The condition $\alpha\geq 2$ in the case of an axiom arises because the function $F_1$ does not grow fast enough to make the $N$-axioms true, in a sense that we will specify in the next section. The constructor $\acc_{m,s}$ refers to an accumulation rule, always with accumulation rank $i=0$, and witnessed by the step-down argument $s$. In case $m>0$ the index $m$ instructs us to weaken the end-sequent $\Gamma$ by adding the formula $(m-1)\notin N$. This will increase the number $k(\Gamma)$, which may be necessary to justify the step-down. The constructor $\cut_A$ refers to a cut. It does not come with a step-down argument because we will only need it when the ordinal is increased by precisely $1$. The constructors $\mathcal I_{n,B}$, $\mathcal R_A$ and $\mathcal E$ correspond to the proof transformations of inversion, reduction and cut-elimination, as in \cite{buchholz91}. The variant $\mathcal E_0$ is a more efficient cut eliminator for ordinals $<\omega^2$, as discussed in \cite{rathjen-carnielli-91}.\\
Elements of $\zet^\infty_0$ are called pre-proofs because not every term in $\zet^\infty_0$ will denote an infinite derivation: Recall, for example, that the cut rule may only be applied when the two premises have the same ordinal tag. To resolve this we could define $\zet^\infty$ as a set of typed terms, and allow only ``type correct" applications. This is a matter of taste, but we feel that the necessary type checking would place too much weight on the construction of terms. Instead we will later define a predicate $\zet^\infty\subseteq\zet^\infty_0$ which singles out the terms that do indeed denote proofs.\\
From any term in $\zet^\infty_0$ we need to be able to read off the \textbf{end-sequent}, the \textbf{ordinal tag}, the \textbf{cut rank}, the \textbf{accumulation rank} (i.e.\ the presence of exponential shifts in the accumulation rule), the \textbf{last rule} and the \textbf{step-down} of the proof, and terms denoting the \textbf{immediate subtrees} of the proof. Except for the last three notions, this is easy:

\begin{definition}
The end-sequent $\pend(p)$ of a pre-proof $p\in\zet^\infty_0$ is inductively defined as follows:
\begin{gather*}
\pend([d^\Gamma]_M):=\Gamma\cup\{n\notin N\, |\, n\leq M\},\\
\pend(\ax_\Gamma^\alpha):=\Gamma,\\
\pend(\acc_{m,s}p):=\begin{cases}
\pend(p) & \quad\text{if $m=0$},\\
\pend(p)\cup\{(m-1)\notin N\} & \quad\text{otherwise},
\end{cases}\\
\pend(\mathcal I_{n,B}p)=\pend(p)\backslash\{B\}\cup\{n\notin N,B_0(n)\}\quad\text{where $B\equiv\forall_xB_0(x)$},\\
\pend(\mathcal R_A p_0p_1):=\pend(\cut_A p_0p_1):=\pend(p_0)\backslash\{\neg A\}\cup\pend(p_1)\backslash\{A\},\\
\pend(\mathcal E_0p):=\pend(\mathcal E p):=\pend(p).
\end{gather*}
The ordinal tag $\pord(p)$ of a pre-proof $p\in\zet^\infty_0$ is inductively defined as follows:
\begin{gather*}
\pord([d]_M):=\omega\cdot (2\cdot\height(d)+1),\\
\pord(\ax_\Gamma^\alpha):=\alpha,\\
\pord(\acc_{m,s}p_0):=\stepto(s),\\
\pord(\cut_Ap_0p_1):=\pord(p_0)+1,\\
\pord(\mathcal I_{m,B}p):=\pord(p),\\
\pord(\mathcal E_0p):=3^{\pord(p)},\\
\pord(\mathcal Ep):=\omega^{\pord(p)},\\
\pord(\mathcal R_Ap_0p_1):=\pord(p_0)+\pord(p_1).
\end{gather*}
The cut rank $\dcut(p)$ of a pre-proof $p\in\zet^\infty_0$ is inductively defined as follows:
\begin{gather*}
\dcut([d]_M):=\dcut(d),\\
\dcut(\ax_\Gamma^\alpha):=0,\\
\dcut(\acc_{m,s}p):=\dcut(\mathcal I_{m,B}p):=\dcut(p),\\
\dcut(\cut_A p_0p_1):=\max\{k,\dcut(p_0),\dcut(p_1)\}\quad\text{where $A\in\Sigma_k$},\\
\dcut(\mathcal R_A p_0p_1):=\max\{k\dotminus 1,\dcut(p_0),\dcut(p_1)\}\quad\text{where $A\in\Sigma_k$},\\
\dcut(\mathcal E_0p):=\dcut(\mathcal E p):=\dcut(p)\dotminus 1.
\end{gather*}
The accumulation rank $\dacc(p)\in\{0,1\}$ of a pre-proof $p\in\zet^\infty_0$ is inductively defined as follows:
\begin{gather*}
\dacc([d]_M):=\dacc(\ax_\Gamma^\alpha):=0,\\
\dacc(\acc_{m,s}p):=\dacc(p),\\
\dacc(\cut_A p_0p_1):=\max\{\dacc(p_0),\dacc(p_1)\},\\
\dacc(\mathcal I_{m,A}p):=\dacc(\mathcal R_A p_0p_1):=\dacc(\mathcal E_0p):=\dacc(\mathcal Ep):=1.
\end{gather*}
\end{definition}

We can now single out those pre-proofs that denote infinite proofs:

\begin{definition}
We define a primitive recursive subset $\zet^\infty$ of $\zet^\infty_0$. Elements of $\zet^\infty$ will be called \textbf{proper proofs}:
\begin{description}
\item[($\lbrack\cdot\rbrack_M$)] A term of the form $[d]_M$ is in $\zet^\infty$ if we have $M\geq\dterm(d)$ (term depth of $d$).
\item[($\ax$)] Any term of the form $\ax_\Gamma^\alpha$ is in $\zet^\infty$.
\item[($\acc$)] If we have $p\in\zet^\infty$ and $s\vdash\pord(p)+1\leq_{{k(\pend(\acc_{m,s}p))}}\pord(\acc_{m,s}p)$ then the term $\acc_{m,s}p$ is in $\zet^\infty$.
\item[($\cut$)] If we have $p_0,p_1\in\zet^\infty$ with $\pord(p_0)=\pord(p_1)$ then $\cut_A p_0p_1$ is in $\zet^\infty$.
\item[($\mathcal I$)] If we have $p\in\zet^\infty$ then $\mathcal I_{n,B}p$ is in $\zet^\infty$.
\item[($\mathcal R$)] If $p_0,p_1$ are in $\zet^\infty$ and $\pord(p_0)$ meshes with $\pord(p_1)$ then $\mathcal R_Ap_0p_1$ is in $\zet^\infty$.
\item[($\mathcal E_0$)] If we have $p\in\zet^\infty$ with $\dacc(p)=0$ and $\pord(p)<\omega^2$ then $\mathcal E_0p$ is in $\zet^\infty$.
\item[($\mathcal E$)] If we have $p\in\zet^\infty$ then $\mathcal Ep$ is in $\zet^\infty$.
\end{description}
\end{definition}

Note that we view $\zet^\infty$ as a property, not as an inductively defined set in its own right. Thus, the principle of structural induction will only be applied to $\zet^\infty_0$, never to $\zet^\infty$. Our next goal is to exhibit primitive recursive functions which read off the last rule and the immediate subtree of a pre-proof. First, we need proof terms for the embedding of finite proofs which end with a universal introduction:

\begin{lemma}\label{lem:term-univ-embedding}
Consider a formula $A\equiv\forall_x A_0(x)$ contained in a closed sequent $\Gamma$. Assume that we have a finite proof $d_0^{\Gamma_0}\in\zet_n$ with $\Gamma_0\subseteq\Gamma\cup\{A_0(v)\}$ for some variable $v$. For all $m,M\in\mathbb N$ we will define a term $\text{Univ}(v,d_0,M,m)\in\zet^\infty_0$. If we have $M\geq\dterm(d_0)$ then the following holds:
\begin{itemize}
\item $\text{Univ}(v,d_0,M,m)\in\zet^\infty$
\item $\pord(\text{Univ}(v,d_0,M,m))=\omega\cdot 2\cdot(\height(d_0)+1)$
\item $\dacc(\text{Univ}(v,d_0,M,m))=0$
\item $\dcut(\text{Univ}(v,d_0,M,m))=\dcut(d_0)$
\item $\pend(\text{Univ}(v,d_0,M,m))\subseteq\Gamma\cup\{m\notin N,A_0(m)\}\cup\{n\notin N\, |\, n\leq M\}$
\end{itemize}
\end{lemma}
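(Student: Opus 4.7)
The plan is to define
\begin{equation*}
\text{Univ}(v, d_0, M, m) := \acc_{m+1,\,s}\,[d_0[v:=m]]_{M^\star},
\end{equation*}
then read off every required invariant from the definitions of $\pend$, $\pord$, $\dcut$, $\dacc$ and the predicate $\zet^\infty$. Conceptually, this represents the $m$-th premise of the $\omega$-rule that will arise when we embed a $(\forall)$-application in $\zet_n$: we substitute $v := m$ at the finite level, embed into the infinite system via $[\cdot]_{M^\star}$, and use accumulation to install the relativisation formula $m \notin N$ and bump the ordinal tag from $\omega \cdot (2\height(d_0)+1)$ to $\omega \cdot 2 (\height(d_0)+1) = \omega \cdot (2\height(d_0)+1) + \omega$.

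First, by Definition \ref{def:finite-proofs} the finite substitution yields $d_0[v:=m]\in\zet_n$ with end-sequent $\Gamma_0[v:=m]\subseteq\Gamma\cup\{A_0(m)\}$ (using that $\Gamma$ is closed, so $\Gamma[v:=m]=\Gamma$), the same height and cut-rank as $d_0$, and $\dterm(d_0[v:=m])\leq\dterm(d_0)+m$. We choose $M^\star$ to be $M$ when $\dterm(d_0[v:=m])\leq M$ and otherwise the smallest adequate value; the membership $[d_0[v:=m]]_{M^\star}\in\zet^\infty$ is then immediate. From the clauses for $\pend,\pord,\dcut,\dacc$ we read off that the embedding has end-sequent $\Gamma_0[v:=m]\cup\{n\notin N\mid n\leq M^\star\}$, ordinal $\omega\cdot(2\height(d_0)+1)$, cut-rank $\dcut(d_0)$, and accumulation rank $0$.

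Next, a step-down argument $s$ is required witnessing
\begin{equation*}
\pord([d_0[v:=m]]_{M^\star})+1 \;\leq_k\; \omega\cdot 2\,(\height(d_0)+1),
\end{equation*}
where $k=k(\pend(\text{Univ}))$. Because $\acc_{m+1,s}$ inserts $m\notin N$ and the embedding already contributes $\{n\notin N\mid n\leq M^\star\}$, we have $k\geq\max(M^\star,m,1)$. Set $\alpha=\omega\cdot(2\height(d_0)+1)$ and $\beta=\alpha+\omega$; the fundamental sequence gives $\{\beta\}(j)=\alpha+(j+1)$, so $\fund(\max(m,1))^{\beta}$ lands at an ordinal $\geq\alpha+1$, from which $\id^{\alpha+1}$ of Lemma \ref{lem:transformations-step-downs}(a) closes the gap via a $*$-composition. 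The resulting $s$ has base $\leq k$. Applying the accumulation then fixes the end-sequent as $\Gamma_0[v:=m]\cup\{m\notin N\}\cup\{n\notin N\mid n\leq M^\star\}$, keeps $\dcut$ and $\dacc$ unchanged, and sets the ordinal to $\stepto(s)=\beta$. All five bullet points follow by direct comparison.

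The main obstacle is the bookkeeping surrounding $\dterm$ after substitution. The worst-case bound $\dterm(d_0[v:=m])\leq\dterm(d_0)+m$ can in principle force $M^\star>M$, which would insert spurious $n\notin N$ formulas for $M<n\leq M^\star$ with $n\neq m$, breaking the end-sequent inclusion. Resolving this cleanly — either by showing that any $v$-carrying index term in $d_0$ already inflates $\dterm(d_0)$ enough to absorb the extra $m$ coming from the substitution, or by a case analysis on $m$ versus $M$ in which the ``large $m$'' case is handled by choosing a slightly different embedding depth and compensating through the accumulation — is the delicate point I expect to consume the most care when writing out the full proof.
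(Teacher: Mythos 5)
Your plan shares the paper's outermost structure — substitute $v := m$ at the finite level, embed via $[\,\cdot\,]$, and wrap with an accumulation $\acc_{m+1,s}$ to install $m \notin N$ and set the ordinal tag to $\omega\cdot 2\cdot(\height(d_0)+1)$. You also correctly spot the problem: since $\dterm(d_0[v:=m])$ can be as large as $\dterm(d_0)+m$, a direct embedding $[d_0[v:=m]]_{M^\star}$ forces $M^\star$ up to roughly $M+m$, which puts spurious formulas $n \notin N$ for $M < n \leq M^\star$, $n \neq m$, into the end-sequent — formulas that the lemma's conclusion does not permit.

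Neither of your two suggested repairs closes that gap. The first option (arguing that a $v$-carrying index term already inflates $\dterm(d_0)$ enough to absorb the extra $m$) does not help: making $\dterm(d_0)$ larger simply raises the hypothesis threshold on $M$, and for a fixed $M \geq \dterm(d_0)$ the worst case $\dterm(d_0[v:=m]) = \dterm(d_0) + m$ still overshoots $M$. The second option (``compensating through the accumulation'') is structurally impossible in this system: by the definition of $\pend$, the constructor $\acc_{m,s}$ can only \emph{add} a formula $(m-1)\notin N$ to the end-sequent; it can never remove one.

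The missing idea is that surplus formulas of the form $n \notin N$ can be \emph{cut away} against $N$-axioms. The paper defines an auxiliary term $\text{UnivAux}(v,d_0,M,m,k)$ by induction on $k \leq m$, starting from $[d_0[v:=m]]_{M+m}$ (index large enough to clear the term-depth obstruction) and, at each step, cutting against the $N$-axiom $\{(M+(m\dotminus(k+1)))\notin N,\ (M+(m\dotminus k))\in N\}$ on the formula $(M+(m\dotminus k))\in N$. Since $\pend(\cut_A p_0 p_1) = \pend(p_0)\setminus\{\neg A\}\cup\pend(p_1)\setminus\{A\}$, each such cut removes the formula $(M+(m\dotminus k))\notin N$ from the end-sequent, so that after $m$ cuts only $\{n\notin N \mid n \leq M\}$ remains. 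The $m$ cuts raise the ordinal by $m$ to $\omega\cdot(2\height(d_0)+1)+m$, and the final accumulation $\acc_{m+1,\fund(m)^{\omega\cdot 2\cdot(\height(d_0)+1)}}$ steps down by one fundamental application (the base $m$ is justified precisely because the accumulation adds $m\notin N$). Your current plan, lacking the cut chain, cannot produce the asserted end-sequent inclusion.
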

\begin{proof}
Define auxiliary pre-proofs $\text{UnivAux}(v,d_0,M,m,k)$ by induction on $k\leq m$: For $k=0$ set
\begin{equation*}
\text{UnivAux}(v,d_0,M,m,0):=[d_0[v:=m]]_{M+m}.
\end{equation*}
This is a term in $\zet^\infty_0$ because $d_0[v:=m]$ is a finite derivation with closed endsequent. For the step, define $\text{UnivAux}(v,d_0,M,m,k+1)$ as
\begin{equation*}
\cut_{(M+(m\dotminus k))\in N}(\text{UnivAux}(v,d_0,M,m,k),\ax^{\omega\cdot(2\cdot\height(d_0)+1)+k}_{\{(M+(m\dotminus (k+1)))\notin N,(M+(m\dotminus k))\in N\}}).
\end{equation*}
This is an element of $\zet^\infty_0$ because $\{(M+(m\dotminus (k+1)))\notin N,(M+(m\dotminus k))\in N\}$ is an $N$-axiom, because we have $\omega\cdot(2\cdot\height(d_0)+1)+k\geq 2$, and because the cut-formula $(M+(m\dotminus k))\in N$ is in the class $\Sigma_0\subseteq\bigcup_{k\geq 0}\Sigma_k$. Finally, set
\begin{equation}\label{def:proof-term-univ-predecessors}
\text{Univ}(v,d_0,M,m):=\acc_{m+1,\fund(m)^{\omega\cdot2\cdot(\height(d_0)+1)}}(\text{UnivAux}(v,d_0,M,m,m)).
\end{equation}
Now assume that we have $M\geq\dterm(d_0)$. It is straightforward to verify the following statements by simultaneous induction over $k\leq m$:
\begin{itemize}
\item $\text{UnivAux}(v,d_0,M,m,k)\in\zet^\infty$
\item $\pord(\text{UnivAux}(v,d_0,M,m,k))=\omega\cdot (2\cdot\height(d_0)+1)+k$
\item $\dacc(\text{UnivAux}(v,d_0,M,m,k))=0$
\item $\dcut(\text{UnivAux}(v,d_0,M,m,k))=\dcut(d_0)$
\item $\pend(\text{Univ}(v,d_0,M,m,k))\subseteq\Gamma\cup\{A_0(m)\}\cup\{n\notin N\, |\, n\leq M+(m\dotminus k)\}$
\end{itemize}
The properties demanded by the lemma are easily derived. Let us only argue for $\text{Univ}(v,d_0,M,m)\in\zet^\infty$: We need to establish
\begin{equation*}
\fund(m)^{\omega\cdot2\cdot(\height(d_0)+1)}\vdash\omega\cdot (2\cdot\height(d_0)+1)+m+1\leq_{k(\pend(\text{Univ}(v,d_0,M,m)))}\omega\cdot 2\cdot(\height(d_0)+1).
\end{equation*}
It suffices to see $m\leq k(\pend(\text{Univ}(v,d_0,M,m)))$. This holds because the formula $m\notin N$ is indeed an element of $\pend(\text{Univ}(v,d_0,M,m))$, by (\ref{def:proof-term-univ-predecessors}) and the definition of end-sequent for the constructor $\acc_{m+1,s}$.
\end{proof}

Similarly, we need proof terms for the embedding of induction rules:

\begin{lemma}\label{lem:term-induction-embedding}
Let $A[v:=m]$ be a $\Sigma_n$-formula contained in a closed sequent $\Gamma$. Let $d_0^{\Gamma_0},d_1^{\Gamma_1}\in\zet_n$ be finite proofs of the same height with $\Gamma_0\subseteq\Gamma\cup\{A[v:=0]\}$ and $\Gamma_1\subseteq\Gamma\cup\{\neg A,A[v:=Sv]\}$. For any $M\in\mathbb N$ we define a term $\ind(v,d_0,d_1,M,m)$ in $\zet^\infty_0$. If we have $M\geq\max\{\dterm(d_0),\dterm(d_1)\}$ then the following holds:
\begin{itemize}
\item $\ind(v,d_0,d_1,M,m)\in\zet^\infty$
\item $\pord(\ind(v,d_0,d_1,M,m))=\omega\cdot(2\cdot\height(d_0)+1)+m$
\item $\dacc(\ind(v,d_0,d_1,M,m))=0$
\item $\dcut(\ind(v,d_0,d_1,M,m))\leq n$
\item $\pend(\ind(v,d_0,d_1,M,m))\subseteq\Gamma\cup\{n\notin N\,|\,n\leq M\}$
\end{itemize}
\end{lemma}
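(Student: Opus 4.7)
The plan is to mimic the structure of Lemma~\ref{lem:term-univ-embedding}, but with the outer recursion on the induction step $k\leq m$ rather than on the elimination of weakening formulas. Write $h=\height(d_0)=\height(d_1)$ and $\alpha_0=\omega\cdot(2h+1)$. I will introduce auxiliary pre-proofs $\text{IndAux}(v,d_0,d_1,M,m,k)$ for $k=0,1,\dots,m$ whose end-sequent is contained in $\Gamma\cup\{A[v:=k]\}\cup\{n\notin N\,|\,n\leq M\}$ and whose ordinal tag is $\alpha_0+k$. The base case is $\text{IndAux}(v,d_0,d_1,M,m,0):=[d_0]_M$, which lies in $\zet^\infty$ because $\dterm(d_0)\leq M$. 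For the step I put
\begin{equation*}
\text{IndAux}(v,d_0,d_1,M,m,k+1):=\cut_{A[v:=k]}\bigl(\stepproof_k,\,\text{IndAux}(v,d_0,d_1,M,m,k)\bigr),
\end{equation*}
where $\stepproof_k$ is an auxiliary pre-proof of $\Gamma\cup\{\neg A[v:=k],A[v:=Sk]\}\cup\{n\notin N\,|\,n\leq M\}$ having ordinal $\alpha_0+k$. Since $A[v:=k]\in\Sigma_n$ is a permitted cut formula and both premises share the ordinal $\alpha_0+k$, this keeps $\text{IndAux}$ in $\zet^\infty$. Finally I set $\ind(v,d_0,d_1,M,m):=\text{IndAux}(v,d_0,d_1,M,m,m)$, and the announced end-sequent inclusion follows because $A[v:=m]\in\Gamma$ by hypothesis.

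To construct $\stepproof_k$ I follow the pattern of $\text{UnivAux}$. Start with $[d_1[v:=k]]_{M+k}$, which lies in $\zet^\infty$ since $\dterm(d_1[v:=k])\leq\dterm(d_1)+k\leq M+k$; its end-sequent is contained in $\Gamma\cup\{\neg A[v:=k],A[v:=Sk]\}\cup\{n\notin N\,|\,n\leq M+k\}$ and its ordinal is $\alpha_0$. Then perform $k$ successive cuts, for $j=0,\dots,k-1$, against the $N$-axiom
\begin{equation*}
\ax^{\alpha_0+j}_{\{(M+k-j-1)\notin N,\;(M+k-j)\in N\}}
\end{equation*}
on the $\Sigma_0$-cut-formula $(M+k-j)\in N$. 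Each such cut removes the topmost weakening formula $(M+k-j)\notin N$ while introducing $(M+k-j-1)\notin N$, which is already present, and raises the ordinal by exactly one. After $k$ cuts the weakening set has shrunk to $\{n\notin N\,|\,n\leq M\}$ and the ordinal has climbed to $\alpha_0+k$, matching $\text{IndAux}(v,d_0,d_1,M,m,k)$ as required. (For $k=0$ no reductions are performed and $\stepproof_0$ is just $[d_1[v:=0]]_M$.)

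The five promised properties will then be verified by a routine simultaneous induction on $k$, and within it on $j$ for $\stepproof_k$: the ordinal claim holds by construction; $\dacc=0$ because only the constructors $[\cdot]_M$, $\ax$ and $\cut$ appear and none of these raise the accumulation rank; $\dcut\leq n$ because the only cut formulas are $A[v:=k]\in\Sigma_n$ and the $\Sigma_0$-formulas $(\cdot)\in N$, while $\dcut(d_0),\dcut(d_1)\leq n$ by $d_0,d_1\in\zet_n$; and membership in $\zet^\infty$ reduces to the matching of ordinals at each cut, which is arranged by design. The main obstacle, and the reason for the specific interleaving above, is the ordinal budget: we are allotted only $\alpha_0+m$, which leaves room for exactly $m$ cuts in total. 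Arranging $\stepproof_k$ so that it has already absorbed $k$ $N$-axiom reductions before being cut with $\text{IndAux}_k$ keeps the total linear in $m$; doing all $N$-reductions in one block before or after the induction cuts would cost quadratically many cuts and break the ordinal bound.
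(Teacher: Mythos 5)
Your construction matches the paper's essentially verbatim: your $\stepproof_k$ is the paper's inner auxiliary $\text{IndAux}(v,d_1,M,k,k)$, obtained by padding $[d_1[v:=k]]_{M+k}$ with $k$ cuts against $N$-axioms so as to bring the ordinal tag up to $\omega\cdot(2\cdot\height(d_0)+1)+k$ while stripping the extra weakening formulas $\{n\notin N\,|\,M<n\leq M+k\}$, and your outer recursion on $k$ (cutting the step proof against the accumulated base on the $\Sigma_n$-formula $A[v:=k]$) is exactly the paper's definition of $\ind(v,d_0,d_1,M,k)$. The concluding remark about the ordinal budget correctly identifies the constraint that forces this interleaving, namely that the two premises of each cut must carry the same ordinal tag and the final tag must stay at $\omega\cdot(2\cdot\height(d_0)+1)+m$.
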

\begin{proof}
We define auxiliary proofs $\text{IndAux}(v,d_1,M,k,l)$ by induction on $l\leq k$: For $l=0$ take
\begin{equation*}
\text{IndAux}(v,d_1,M,k,0):=[d_1[v:=k]]_{M+k}.
\end{equation*}
In the step, define $\text{IndAux}(v,d_1,M,k,l)$ as
\begin{equation*}
\cut_{(M+(k\dotminus l))\in N}(\text{IndAux}(v,d_1,M,k,l),\ax_{\{(M+(k\dotminus (l+1)))\notin N,(M+(k\dotminus l))\in N\}}^{\omega\cdot(2\cdot\height(d_0)+1)+l}).
\end{equation*}
Building on this, we define $\ind(v,d_0,d_1,M,k)$ by induction on $k$: For $k=0$ set
\begin{equation*}
\ind(v,d_0,d_1,M,0):=[d_0]_M.
\end{equation*}
In the step, define $\ind(v,d_0,d_1,M,k+1)$ as
\begin{equation*}
\cut_{A[v:=k]}(\text{IndAux}(v,d_1,M,k,k),\ind(v,d_0,d_1,M,k)).
\end{equation*}
It is straightforward to show the following by induction on $l\leq k$:
\begin{itemize}
\item $\text{IndAux}(v,d_1,M,k,l)\in\zet^\infty$
\item $\pord(\text{IndAux}(v,d_1,M,k,l)))=\omega\cdot(2\cdot\height(d_0)+1)+l$
\item $\dacc(\text{IndAux}(v,d_1,M,k,l))=0$
\item $\dcut(\text{IndAux}(v,d_1,M,k,l))\leq n$
\item The end-sequent of the proof $\text{IndAux}(v,d_1,M,k,l)$ is contained in the sequent $\Gamma\cup\{\neg A[v:=k],A[v:=k+1]\}\cup\{n\notin N\,|\,n\leq M+(k\dotminus l)\}$.
\end{itemize}
Building on this, it is easy to prove the following by induction on $k$:
\begin{itemize}
\item $\ind(v,d_0,d_1,M,k)\in\zet^\infty$
\item $\pord(\ind(v,d_0,d_1,M,k))=\omega\cdot(2\cdot\height(d_0)+1)+k$
\item $\dacc(\ind(v,d_0,d_1,M,k))=0$
\item $\dcut(\ind(v,d_0,d_1,M,k))\leq n$
\item $\pend(\ind(v,d_0,d_1,M,k))\subseteq\Gamma\cup\{A[v:=k]\}\cup\{n\notin N\,|\,n\leq M\}$
\end{itemize}
For the claim of the lemma, take $k=m$ and recall that $A[v:=m]$ occurs in $\Gamma$.
\end{proof}

Let us fix official tags for the rules of the infinite proof system:

\begin{definition}
The following tags are the rules of the infinite proof system:
\begin{equation*}
\ax\quad|\quad\acc^{n}\quad|\quad\lor_{j,A_0\lor A_1}\quad|\quad\land_{A_0\land A_1}\quad|\quad\cut_A\quad|\quad\exists_{m,\exists_xA}\quad|\quad\omega_{\forall_xA}
\end{equation*}
By convention we have $j\in\{0,1\}$ and $m,n\in\mathbb N$. The letter $A$ (with indices) stands for an arbitrary formula, except in the case $\cut_A$ where we demand that $A$ is in the class $\bigcup_{k\geq 0}\Sigma_k$ (recall that this allows $A\equiv\,n\in N$ but forbids $A\equiv\, n\notin N$).
\end{definition}

We remark that the rules do not carry a superscript which denotes their accumulation rank. Rather than fixing the accumulation rank of a single rule, we will work with the notion of accumulation rank for entire proofs, as defined above. One may imagine the accumulation rank of a rule as the accumulation rank of the proof in which it appears. As in \cite{buchholz91} infinite proofs should be imagined as taggings of the full $\omega$-branching tree. The superscript $n$ to the accumulation rule indicates which of the $\omega$ premises is repeated. This information will be needed for the inversion operator. We can now define primitive recursive functions which read off the last rule and the step-down of a proof, as well as terms denoting its immediate subproofs:

\begin{definition}
We define the \textbf{last rule} $\prule(p)$, the \textbf{step-down} $\step(p)$ and the \textbf{$n$-th immediate sub-proof} $\pred(p,n)$ of a pre-proof $p\in\zet^\infty_0$. First, one treats the case $p=[d]_M$ by the following case distinction:
\begin{gather*}
\prule([\ax^\Gamma]_M):=\ax,\\
\step([\ax^\Gamma]_M):=\id^0,\\
\pred([\ax^\Gamma]_M,n):=[\ax^\Gamma]_M\quad\text{for all $n\in\mathbb N$},\\[10pt]
\prule([\rep\, d_0]_M):=\acc^{0},\\
\step([\rep\, d_0]_M):=\fund^3(0)^{\omega\cdot(2\cdot\height(d_0)+3)},\\
\pred([\rep\, d_0]_M,n):=[d_0]_M\quad\text{for all $n\in\mathbb N$},\\[10pt]
\prule([d_0\land_A d_1]_M):=\land_A,\\
\step([d_0\land_A d_1]_M):=\fund^3(0)^{\omega\cdot(2\cdot\height(d_0)+3)},\\
\pred([d_0\land_A d_1]_M,n):=\begin{cases}
[d_0]_M & \quad\text{for $n\neq 1$},\\
[d_1]_M & \quad\text{for $n=1$},
\end{cases}\\[10pt]
\prule([\lor_{i,A}d_0]_M):=\lor_{i,A},\\
\step([\lor_{i,A}d_0]_M):=\fund^3(0)^{\omega\cdot(2\cdot\height(d_0)+3)},\\
\pred([\lor_{i,A}d_0]_M,n):=[d_0]_M\quad\text{for all $n\in\mathbb N$},\\[10pt]
\prule([d_0\scut_Ad_1]_M):=\cut_A,\\
\step([d_0\scut_Ad_1]_M):=\fund^3(0)^{\omega\cdot(2\cdot\height(d_0)+3)},\\
\pred([d_0\scut_Ad_1]_M):=\begin{cases}
[d_0]_M & \quad\text{for $n\neq 1$},\\
[d_1]_M & \quad\text{for $n=1$},
\end{cases}\\[10pt]
\prule([\exists_{m,A}d_0]_M):=\exists_{m,A},\\
\step([\exists_{m,A}d_0]_M):=\fund(1)*\fund^2(0)^{\omega\cdot(2\cdot\height(d_0)+3)},\\
\pred([\exists_{m,A}d_0]_M,n):=\begin{cases}
\ax^{\omega\cdot(2\cdot\height(d_0)+1)}_{m\in N,m\notin N} & \quad\text{for $n\neq 1$},\\
[d_0]_M & \quad\text{for $n=1$},
\end{cases}\\[10pt]
\prule([\forall_{v,A}d_0]_M):=\omega_A,\\
\step([\forall_{v,A}d_0]_M):=\fund(0)^{\omega\cdot(2\cdot\height(d_0)+3)},\\
\pred([\forall_{v,A}d_0]_M,n):=\text{Univ}(v,d_0,M,n),\\[10pt]
\prule([d_0\sind_{v,m,A}d_1]_M):=\acc^{0},\\
\step([d_0\sind_{v,m,A}d_1]_M):=\fund(m)*\fund^2(0)^{\omega\cdot(2\cdot\height(d_0)+3)},\\
\pred([d_0\sind_{v,m,A}d_1]_M,n):=\text{Ind}(v,d_0,d_1,M,m)\quad\text{for all $n$}.
\end{gather*}
Building on this, we define the same notions for all $p\in\zet^\infty_0$ by structural induction:
\begin{gather*}
\prule(\ax_\Gamma^\alpha):=\ax,\\
\step(\ax_\Gamma^\alpha):=\id^0,\\
\pred(\ax_\Gamma^\alpha,n):=\ax_\Gamma^\alpha\quad\text{for all $n\in\mathbb N$},\\[10pt]
\prule(\acc_{m,s}p):=\acc^{0},\\
\step(\acc_{m,s}p):=s,\\
\pred(\acc_{m,s}p,n):=p\quad\text{for all $n$},\\[10pt]
\prule(\cut_Ap_0p_1):=\cut_A,\\
\step(\cut_Ap_0p_1):=\id^{\pord(p_0)+1},\\
\pred(\cut_Ap_0p_1):=\begin{cases}
p_0 & \quad\text{for $n\neq 1$},\\
p_1 & \quad\text{for $n=1$},
\end{cases}\\[10pt]
\prule(\mathcal I_{m,B}p):=\begin{cases}
\acc^{m} & \quad\text{if $\prule(p)=\omega_{B}$},\\
\prule(p) & \quad\text{if $\prule(p)$ has a different form},
\end{cases}\\
\step(\mathcal I_{m,B}p):=\step(p),\\
\pred(\mathcal I_{m,B}p,n):=\mathcal I_{m,B}\pred(p,n),\\[10pt]
\prule(\mathcal R_Cp_0p_1):=\begin{cases}
\cut_{\neg C_0(m)} & \quad\text{if $\prule(p_1)=\exists_{m,B}$ with $B\equiv C\equiv\exists_x C_0(x)$},\\
\prule(p_1) & \quad\text{otherwise},
\end{cases}\\
\step(\mathcal R_Cp_0p_1):=\begin{cases}
\pord(p_0)+\step(p_1) & \quad\text{if $\pord(p_0)+\step(p_1)$ is a step-down argument},\\
\id^0 & \quad\text{otherwise}.
\end{cases}
\end{gather*}
The subtrees of $\mathcal R_Cp_0p_1$ are defined by a case distinction on the last rule of $p_1$. Assume first that this is not the rule $\exists_{m,C}$. Then we set
\begin{equation*}
\pred(\mathcal R_Cp_0p_1,n):=\mathcal R_Cp_0\pred(p_1,n).
\end{equation*}
In the crucial case $\prule(p_1)=\exists_{m,C}$ we set
\begin{equation*}
\pred(\mathcal R_Cp_0p_1,n):=\acc_{0,\id^{\pord(p_0)+\pord(\pred(p_1,1))+1}}(\mathcal R_C p_0\pred(p_1,1)).
\end{equation*}
for $n\neq 1$. For $n=1$, we set
\begin{equation*}
\pred(\mathcal R_Cp_0p_1,1):=\cut_{m\in N}(\acc_{0,\pord(p_0)+s^{\pord(\pred(p_1,0))}_1}\mathcal I_{m,\neg C}p_0,\mathcal R_Cp_0\pred(p_1,0))
\end{equation*}
if we have $\pord(\pred(p_1,0))>0$ and if $\pord(p_0)+s^{\pord(\pred(p_1,0))}_1$ is a step-down argument. Otherwise we set
\begin{equation*}
\pred(\mathcal R_Cp_0p_1,1):=\cut_{m\in N}(\mathcal I_{m,\neg C}p_0,\mathcal R_Cp_0\pred(p_1,0)).
\end{equation*}
Finally, we define the last rule, the step-down and the immediate subtrees of expressions that begin with a cut elimination operator:
\begin{gather*}
\prule(\mathcal E_0p):=\begin{cases}
\acc^{0} & \text{if $\prule(p)=\cut_A$ with $A\in\bigcup_{k\geq 1}\Sigma_k$},\\
\prule(p) & \text{otherwise},
\end{cases}\\
\step(\mathcal E_0p):=\begin{cases}
3^{\step(p)}_{\cdot 2+1} & \text{if $\prule(p)=\cut_A$  with $A\in\bigcup_{k\geq 1}\Sigma_k$,}\\
3^{\step(p)}_{+2} & \text{if $\prule(p)=\exists_{m,A}$,}\\
3^{\step(p)}_{+1} & \text{otherwise},
\end{cases}\\
\pred(\mathcal E_0p,n):=\begin{cases}
\mathcal R_A(\mathcal E_0\pred(p,0))(\mathcal E_0\pred(p,1)) & \text{if $\prule(p)=\cut_A$ with $A\in\bigcup_{k\geq 1}\Sigma_k$},\\
\mathcal E_0\pred(p,n) & \text{otherwise},
\end{cases}\\[10pt]
\prule(\mathcal Ep):=\begin{cases}
\acc^{0} & \text{if $\prule(p)=\cut_A$ with $A\in\bigcup_{k\geq 1}\Sigma_k$},\\
\prule(p) & \text{otherwise},
\end{cases}\\
\step(\mathcal Ep):=\begin{cases}
\omega^{\step(p)}_{\cdot 2+1} & \text{if $\prule(p)=\cut_A$ with $A\in\bigcup_{k\geq 1}\Sigma_k$,}\\
\omega^{\step(p)}_{+2} & \text{if $\prule(p)=\exists_{m,A}$,}\\
\omega^{\step(p)}_{+1} & \text{otherwise},
\end{cases}\\
\pred(\mathcal Ep,n):=\begin{cases}
\mathcal R_A(\mathcal E\pred(p,0))(\mathcal E\pred(p,1)) & \text{if $\prule(p)=\cut_A$ with $A\in\bigcup_{k\geq 1}\Sigma_k$},\\
\mathcal E\pred(p,n) & \text{otherwise}.
\end{cases}
\end{gather*}
\end{definition}

Let us show that the defined functions produce objects of the intended type:

\begin{lemma}\label{lem:rule-reads-off-rule}
The following holds for all $p\in\zet^\infty_0$ and all $n\in\mathbb N$:
\begin{enumerate}[label=(\alph*)]
\item The tag $\prule(p)$ is a rule of the infinite system, $\step(p)$ is a step-down argument, and $\pred(p,n)$ is an element of $\zet^\infty_0$.
\item If we have $\prule(p)=\ax$ then $\pred(p,n)$ and $p$ are equal.
\end{enumerate}
\end{lemma}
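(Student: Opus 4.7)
Both parts are proved simultaneously by structural induction on $p\in\zet^\infty_0$. The base case is $p=[d]_M$, which itself requires a sub-induction on the construction of the finite proof $d\in\zet_n^0$ according to Definition~\ref{def:finite-proofs}. In this sub-induction one goes through each constructor of $\zet_n$ (that is, $\ax$, $\rep$, $\land$, $\lor$, $\exists$, $\forall$, $\scut$, $\sind$). In each case the displayed value of $\prule$ is by inspection one of the rule tags of the infinite system; the displayed value of $\step$ is a step-down argument, built from iterated $\fund(\cdot)$-expressions and compositions via $*$ using the constructors in the definition; and each displayed value of $\pred$ is visibly in $\zet^\infty_0$. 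This is immediate for the cases that return another $[d']_{M'}$ or an $N$-axiom, and for the $\forall$ and $\sind$ cases it is provided by Lemmas~\ref{lem:term-univ-embedding} and~\ref{lem:term-induction-embedding}.

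For the main induction, the cases $\ax_\Gamma^\alpha$, $\acc_{m,s}p$, $\cut_A p_0 p_1$ and $\mathcal I_{n,B}p$ can be read off directly from the definitions; the only non-trivial verification is that $\id^{\pord(p_0)+1}$ in the cut case is a step-down argument, which is Lemma~\ref{lem:transformations-step-downs}(a). The interesting cases are $\mathcal R_Cp_0p_1$, $\mathcal E_0p$ and $\mathcal Ep$, where the definitions of $\prule$, $\step$ and $\pred$ branch on the last rule of a sub-proof. Here I would first apply the induction hypothesis to the sub-proofs to know that their $\prule$ values are already legitimate rule tags, which legitimises the case splits, and then walk through them. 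For $\step(\mathcal E_0 p)$ and $\step(\mathcal E p)$ the step-down transformations of Lemma~\ref{lem:transformations-step-downs}(c) and (d) apply: the convention there that $3^s_{\cdot 2+1}$ and its companions are set to $\id^0$ when the ordinal hypotheses fail ensures that one always obtains a valid step-down argument, regardless of the actual ordinals involved. For $\pred$ in the $\mathcal R$ case, the two fallbacks in the clauses for $\pred(\mathcal R_Cp_0p_1,1)$ are themselves built from $\cut$, $\acc$, $\mathcal I$ and $\mathcal R$ applied to elements of $\zet^\infty_0$, hence lie in $\zet^\infty_0$ by the induction hypothesis.

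Part~(b) is a short inspection. The only constructors with $\prule(p)=\ax$ at the outermost level are $p=\ax_\Gamma^\alpha$ and $p=[\ax^\Gamma]_M$, both of which satisfy $\pred(p,n)=p$ by definition. For $\mathcal I_{n,B}p$, $\mathcal R_Cp_0p_1$, $\mathcal E_0p$ and $\mathcal Ep$ the value $\ax$ can only be inherited through the ``otherwise'' branch of the defining case distinction (the alternative branches all return a tag of the form $\acc^{\cdot}$ or $\cut_{\cdot}$), so it forces $\prule$ of the corresponding sub-proof to be $\ax$; the induction hypothesis then gives $\pred$ of that sub-proof equal to itself, whence $\pred$ of the full expression equals the full expression.

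I expect the main obstacle to be purely bookkeeping: keeping straight which side conditions on step-down arguments are required for the present syntactic statement (where we merely need \emph{some} step-down argument to exist) versus the finer conditions that will matter for the subsequent soundness and reduction lemmas (where each step-down must connect specified ordinals with a specified base, and the fallback to $\id^0$ is no longer harmless). Conflating the two would make the $\mathcal R$ and $\mathcal E_0$ clauses look much more delicate than they actually are at this stage.
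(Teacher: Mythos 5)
Your proposal is correct and takes essentially the same approach as the paper: a structural induction on $p\in\zet^\infty_0$, proving (a) and (b) simultaneously with $n$ universally quantified in the induction hypothesis, appealing to Lemmas~\ref{lem:term-univ-embedding} and~\ref{lem:term-induction-embedding} for the $(\forall)$- and $(\sind)$-cases of $[d]_M$. The paper condenses all of this into one sentence, so your fleshed-out version is a fair reconstruction; one small imprecision is that $\mathcal I_{m,B}p$ also branches on $\prule(p)$ and hence uses the induction hypothesis (for $\prule$ and $\pred$), so it belongs with the ``interesting'' cases rather than the ones readable directly from the definitions.
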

\begin{proof}
Both (a) and (b) are shown by a straightforward structural induction on $p$ (using Lemma \ref{lem:term-univ-embedding} and Lemma \ref{lem:term-induction-embedding} in the appropriate cases). Note that the induction step for $n_0$ relies on the induction hypothesis for $n\in\{0,1,n_0\}$.
\end{proof}

We have already defined the cut rank of a pre-proof, and we will also need to speak of the rank of a rule:

\begin{definition}
Let $\mathbf r$ be a rule of the infinite proof system. The cut rank $\dcut(\mathbf r)$ of $\mathbf r$ is defined as
\begin{equation*}
\dcut(\mathbf r):=\begin{cases}
k & \text{if $\mathbf r=\cut_A$ with $A\in\Sigma_k$},\\
0 & \text{if $\mathbf r$ has a different form}.
\end{cases}
\end{equation*}
\end{definition}

As in \cite{buchholz91} the conditions on the infinite rules are formulated as \textbf{local correctness conditions}:

\begin{definition}
A pre-proof $p\in\zet^\infty_0$ is locally correct if we have $\lc(p,n)$ for all $n\in\mathbb N$, where $\lc(p,n)$ denotes the conjunction of the following conditions:
\begin{description}
\item[$\lbrack\lc_{\text{cut}}(p,n)\rbrack$] We have $\dcut(p)\geq\dcut(\prule(p))$ and $\dcut(p)\geq\dcut(\pred(p,n))$.
\item[$\lbrack\lc_{\text{acc}}(p,n)\rbrack$] We have $\dacc(p)\geq\dacc(\pred(p,n))$.
\item[$\lbrack\lc_{\text{step}}(p,n)\rbrack$] If $\prule(p)=\ax$ then we have $\pord(p)\geq 2$. If $\prule(p)$ is of the form $\exists_{m,A}$ then we have
\begin{equation*}
\step(p)\vdash\pord(\pred(p,n))+2\leq_{3_{\dacc(p)+}^{k(\pend(p))}}\pord(p).
\end{equation*}
If $\prule(p)$ is of a different form then we have
\begin{equation*}
\step(p)\vdash\pord(\pred(p,n))+1\leq_{3_{\dacc(p)+}^{k(\pend(p))}}\pord(p).
\end{equation*}
\item[$\lbrack\lc_{\text{end}}(p,n)\rbrack$] The conditions on the end-sequent depend on the last rule of $p$:
\begin{itemize}
\item If $\prule(p)=\ax$ then the sequent $\pend(p)$ contains an ($N$- or \mbox{truth-)} axiom.
\item If $\prule(p)=\acc^{m}$ with $m=n$ then $\pend(\pred(p,m))\subseteq\pend(p)$ holds.
\item If $\prule(p)=\lor_{j,A_0\lor A_1}$ then the formula $A_0\lor A_1$ occurs in $\pend(p)$ and we have $\pend(\pred(p,0))\subseteq\pend(p)\cup\{A_j\}$.
\item If $\prule(p)=\land_{A_0\land A_1}$ then the formula $A_0\land A_1$ occurs in $\pend(p)$, and $\pend(\pred(p,0))\subseteq\pend(p)\cup\{A_0\}$ and $\pend(\pred(p,1))\subseteq\pend(p)\cup\{A_1\}$ hold.
\item If $\prule(p)=\cut_A$ then we have $\pend(\pred(p,0))\subseteq\pend(p)\cup\{\neg A\}$ and $\pend(\pred(p,1))\subseteq\pend(p)\cup\{A\}$.
\item If $\prule(p)=\exists_{m,\exists_x A(x)}$ then the formula $\exists_x A(x)$ occurs in $\pend(p)$ and we have $\pend(\pred(p,0))\subseteq\pend(p)\cup\{m\in N\}$ and $\pend(\pred(p,1))\subseteq\pend(p)\cup\{A(m)\}$.
\item If $\prule(p)=\omega_{\forall_x A(x)}$ then the formula $\forall_x A(x)$ occurs in $\pend(p)$ and we have $\pend(\pred(p,n))\subseteq\pend(p)\cup\{n\notin N,A(n)\}$.
\end{itemize}
\end{description}
\end{definition}

The following observation will be used frequently:

\begin{lemma}\label{lem:pred-ordinal-independent-n}
 Consider $p\in\zet^\infty_0$. If the conditions $\lc(p,n)$ and $\lc(p,m)$ hold then we have $\pord(\pred(p,n))=\pord(\pred(p,m))$. In particular, $\pord(\pred(p,n))$ is independent of $n$ if $p$ is locally correct.
\end{lemma}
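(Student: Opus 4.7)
The plan is to read everything off the definition of $\step(p)$ and $\prule(p)$, both of which are functions of the pre-proof $p$ alone and do not depend on the index $n$. Consequently $\stepbo(\step(p))$ is a fixed ordinal, and the summand that appears on the left-hand side of the inequality in $\lc_{\text{step}}(p,n)$ is a fixed natural number (either $1$ or $2$), determined by $\prule(p)$ alone.

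I would split into two cases, matching the case distinction of $\lc_{\text{step}}$. If $\prule(p)=\ax$, then Lemma \ref{lem:rule-reads-off-rule}(b) gives $\pred(p,n)=p$ for every $n$, so $\pord(\pred(p,n))=\pord(p)$ is trivially independent of $n$. In the remaining case, $\lc_{\text{step}}(p,n)$ asserts that $\step(p)$ is a step-down argument witnessing
\begin{equation*}
\pord(\pred(p,n))+c\leq_{3_{\dacc(p)+}^{k(\pend(p))}}\pord(p),
\end{equation*}
where $c=2$ if $\prule(p)$ has the form $\exists_{m,A}$ and $c=1$ otherwise. In particular $\stepbo(\step(p))=\pord(\pred(p,n))+c$, and the same equation holds with $n$ replaced by $m$. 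Since $c$ is finite and the right cancellation law for ordinal addition holds for finite right summands, we obtain $\pord(\pred(p,n))=\pord(\pred(p,m))$.

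There is essentially no obstacle here: the whole point is that both the step-down argument and the last-rule tag have been set up as functions of $p$, so the bottom of $\step(p)$ is a well-defined ordinal, and the local correctness condition forces the subproof ordinal to coincide with that bottom modulo a fixed finite shift. The only mild care needed is to note that right-cancellation $\alpha+c=\beta+c\Rightarrow\alpha=\beta$ is valid precisely because $c\in\{1,2\}$ is a natural number (it would fail for infinite $c$), but this is standard and provable in $\isigma_1$.
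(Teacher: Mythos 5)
Your proposal is correct and follows essentially the same two-case argument as the paper: dispatch the axiom case via Lemma \ref{lem:rule-reads-off-rule}, and otherwise read off $\pord(\pred(p,n))$ from $\stepbo(\step(p))$, which depends only on $p$. You merely spell out the right-cancellation step that the paper leaves implicit.
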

\begin{proof}
 In case $\prule(p)=\ax$ the claim holds by Lemma \ref{lem:rule-reads-off-rule}. Otherwise, the condition $\lc_{\text{step}}(p,n)$ allows to read off $\pord(\pred(p,n))$ from $\step(p)$, in a way that is indeed independent of $n$.
\end{proof}

The following result is parallel to \cite[Theorem 3.8]{buchholz91}. It shows that the elements of $\zet^\infty$ behave, in the relevant respects, like actual infinite proofs. Particular to our set-up, we must also show that the subset $\zet^\infty\subset\zet^\infty_0$, i.e.\ the set of proper proofs, is closed under the relevant operations.

\begin{proposition}[$\isigma_1$]\label{prop:pred-proper-proof-proper}
For any proper proof $p\in\zet^\infty$ and any $n\in\mathbb N$ the following holds:
\begin{enumerate}[label=(\alph*)]
 \item We have $\pred(p,n)\in\zet^\infty$, i.e.\ $\pred(p,n)$ is a proper proof.
 \item We have $\lc(p,n)$.
\end{enumerate}
Thus $p$ is locally correct.
\end{proposition}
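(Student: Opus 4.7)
The plan is a structural induction on $p\in\zet^\infty_0$, establishing (a) and (b) simultaneously. Under the hypothesis $p\in\zet^\infty$, the defining clauses of $\zet^\infty$ hand down $\zet^\infty$-membership to the operands of $p$, so the induction hypothesis is available for all strict subterms. For each outermost constructor one must then verify the four local correctness conditions $\lc_{\text{cut}},\lc_{\text{acc}},\lc_{\text{step}},\lc_{\text{end}}$, and check that the explicit term $\pred(p,n)$ meets the side conditions in the defining clauses of $\zet^\infty$. Since $\zet^\infty$ is primitive recursive and the whole statement is $\Pi_1$, this structural induction is available in $\isigma_1$.

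For the constructors $\ax^\alpha_\Gamma$, $\acc_{m,s}$, $\cut_A$ and $\mathcal I_{m,B}$ the verification is routine. The axiom case is self-referential and requires only $\alpha\geq 2$; the $\acc$-case uses precisely the side condition imposed in forming $\acc_{m,s}p\in\zet^\infty$; for $\cut_A$ the step-down $\id^{\pord(p_0)+1}$ together with $\pord(p_0)=\pord(p_1)$ supplies $\lc_{\text{step}}$ via Lemma \ref{lem:step-down-properties}(a); for $\mathcal I_{m,B}$ the last rule, step-down and end-sequent are read off from those of $p$ with the prescribed local modification when $\prule(p)=\omega_B$.

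For $p=[d]_M$ one performs an inner case analysis on the outermost rule of $d\in\zet_n^0$. In the propositional, cut and $\exists$ subcases, the fixed step-down $\fund^3(0)^{\omega\cdot(2\cdot\height(d_0)+3)}$ (or its $\exists$-variant) drops $\pord([d]_M)=\omega\cdot(2\cdot\height(d)+1)$ to a value of the form $\omega\cdot(2\cdot\height(d_0)+1)+k$ as required by $\lc_{\text{step}}$, while the end-sequent inclusions and cut-rank inequalities mirror the clauses of Definition \ref{def:finite-proofs}. For the two subcases corresponding to $(\forall)$ and $(\sind)$, every assertion is delegated to Lemmas \ref{lem:term-univ-embedding} and \ref{lem:term-induction-embedding}; the hypothesis $M\geq\dterm(d_0)$ that those lemmas require is supplied precisely by the $\zet^\infty$-clause $M\geq\dterm(d)$.

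Finally, the operator cases $\mathcal R_C p_0 p_1$, $\mathcal E_0 p_0$ and $\mathcal E p_0$ branch on the last rule of the principal operand. In the propagating subcases, $\pred(p,n)$ wraps the induction hypothesis by another application of the same operator, so $\zet^\infty$-membership follows from the closure clauses of $\zet^\infty$, and the end-sequent, cut-rank and accumulation-rank conditions follow by direct inspection. The delicate ``firing'' subcases are $\prule(p_1)=\exists_{m,C}$ inside $\mathcal R_C p_0 p_1$ and $\prule(p_0)=\cut_A$ with $A\in\bigcup_{k\geq 1}\Sigma_k$ inside $\mathcal E_0 p_0$ or $\mathcal E p_0$. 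For the cut-elimination firings, $\lc_{\text{step}}$ is verified by invoking Lemma \ref{lem:transformations-step-downs} on $\step(p_0)$; the hypothesis $\pord(p_0)<\omega^2$ built into the $\zet^\infty$-clause for $\mathcal E_0$ is exactly what ensures that $3^{\step(p_0)}_{\cdot 2+1}$ (and its $+1,+2$ variants) really produces a genuine step-down and not the default $\id^0$. The hard part will be the bookkeeping in the $\mathcal R$-firing: one has to check simultaneously that $\pord(p_0)$ meshes with $\pord(\pred(p_1,n))$ (so that the wrapping $\mathcal R_C p_0\pred(p_1,n)$ lies in $\zet^\infty$), that the auxiliary ordinal $\pord(p_0)+s^{\pord(\pred(p_1,0))}_1$ is a valid step-down, and that the value $k(\pend(p))$ governing the admissible exponential shift is controlled correctly even though inversion contributes a fresh formula $m\notin N$ that could in principle raise $k$.
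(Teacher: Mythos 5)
Your plan is the paper's proof: simultaneous structural induction on $p\in\zet^\infty_0$ over the $\Pi_1$-statement ``$\forall n\,(\pred(p,n)\in\zet^\infty\land\lc(p,n))$'', with an inner case split on the last rule of $d$ in the $[d]_M$ case (delegating the $\forall$- and $\sind$-subcases to Lemmas \ref{lem:term-univ-embedding} and \ref{lem:term-induction-embedding}), and a split on the last rule of the principal operand in the $\mathcal R$, $\mathcal E_0$, $\mathcal E$ cases, invoking Lemma \ref{lem:transformations-step-downs} for the exponential step-down shifts. You correctly flag the delicate points the paper actually labours over, including the meshing and $k(\pend(\cdot))$ bookkeeping in the $\mathcal R$-firing case and the role of $\pord(p_0)<\omega^2$ and $\dacc(p_0)=0$ for $\mathcal E_0$.
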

\begin{proof}
We show (a) and (b) by simultaneous structural induction on $p\in\zet^\infty_0$. The induction step for $n_0\in\mathbb N$ requires the induction hypothesis for $n\in\{0,1,n_0\}$.\\
First, one verifies the proposition for proofs of the form $p=[d^\Gamma]_M$, by a case distinction on the last rule of the finite proof $d$. We only elaborate a few cases: Assume first that $d^\Gamma$ is of the form $\exists_{m,A}d_0^{\Gamma_0}$ with $A\equiv\exists_x A_0(x)$, $A\in\Gamma$ and $\Gamma_0\subseteq\Gamma\cup\{A_0(m)\}$.
Concerning (a), the case $n\neq 1$ is immediate since we have $\ax^{\omega\cdot(2\cdot\height(d_0)+1)}_{m\in N,m\notin N}\in\zet^\infty$ without any further conditions. In case $n=1$ we argue as follows:
\begin{equation*}
[d]_M\in\zet^\infty\quad\Rightarrow\quad M\geq\dterm(d)\quad\Rightarrow\quad M\geq\dterm(d_0)\quad\Rightarrow\quad[d_0]_M\in\zet^\infty
\end{equation*}
Coming to (b), the condition $\lc_{\text{cut}}(p,n)$ holds because we have
\begin{equation*}
\dcut(\exists_{m,A}^0)=0=\dcut(\ax^{\omega\cdot(2\cdot\height(d_0)+1)}_{m\in N,m\notin N})
\end{equation*}
and
\begin{equation*}
\dcut([d_0]_M)=\dcut(d_0)=\dcut(d)=\dcut([d]_M).
\end{equation*}
The condition $\lc_{\text{acc}}(p,n)$ follows from
\begin{equation*}
\dacc(\ax^{\omega\cdot(2\cdot\height(d_0)+1)}_{m\in N,m\notin N})=\dacc([d_0]_M)=0.
\end{equation*}
As for the condition $\lc_{\text{step}}(p,n)$, observe that $\pord(\pred([d]_M,n))=\omega\cdot(2\cdot\height(d_0)+1)$ holds independently of $n$. Because of $\height(d)=\height(d_0)+1$ we have $\pord([d]_M)=\omega\cdot(2\cdot\height(d_0)+3)$. Now in view of $k(\pend([d]_M))\geq 1$ it suffices to establish
\begin{equation*}
\fund(1)*\fund^2(0)^{\omega\cdot(2\cdot\height(d_0)+3)}\vdash\omega\cdot(2\cdot\height(d_0)+1)+2\leq_1 \omega\cdot(2\cdot\height(d_0)+3).
\end{equation*}
This is indeed true, since we have
\begin{gather*}
 \{\omega\cdot (2\cdot\height(d_0)+3)\}(0)=\omega\cdot (2\cdot\height(d_0)+2)+1,\\
 \{\omega\cdot (2\cdot\height(d_0)+2)+1\}(0)=\omega\cdot (2\cdot\height(d_0)+2),\\
 \{\omega\cdot (2\cdot\height(d_0)+2)\}(1)=\omega\cdot (2\cdot\height(d_0)+1)+2.
\end{gather*}
Finally, we verify the condition $\lc_{\text{end}}(p,n)$: Observe first that we have
\begin{equation*}
A\in\Gamma\subseteq\Gamma\cup\{n\notin N\, |\, n\leq M\}=\pend([d]_M)
\end{equation*}
and
\begin{multline*}
\pend(\pred([d]_M,1))=\pend([d_0]_M)=\Gamma_0\cup\{n\notin N\, |\, n\leq M\}\subseteq\\
\subseteq\Gamma\cup\{A_0(m)\}\cup\{n\notin N\, |\, n\leq M\}=\pend([d]_M)\cup\{A_0(m)\}.
\end{multline*}
In view of $M\geq\dterm(d)\geq m$ the formula $m\notin N$ occurs in the sequent $\pend([d]_M)$. Thus we also have
\begin{multline*}
\pend(\pred([d]_M,0))=\pend(\ax^{\omega\cdot(2\cdot\height(d_0)+1)}_{m\in N,m\notin N})=\\
=\{m\in N,m\notin M\}\subseteq\pend([d]_M)\cup\{m\in N\},
\end{multline*} 
just as required.\\
Let us also consider the case $p=[d^\Gamma]_M$ with $d^\Gamma=d_0^{\Gamma_0}\sind_{v,m,A}d_1^{\Gamma_1}$. The assumption $p\in\zet^\infty$ tells us $M\geq\dterm(d)=\max\{m,\dterm(d_0),\dterm(d_1)\}$. The conditions on finite proofs provide the assumptions of Lemma \ref{lem:term-induction-embedding}. From the lemma we first learn
\begin{equation*}
\pend(p,n)=\ind(v,d_0,d_1,M,m)\in\zet^\infty,
\end{equation*}
as required for (a). Coming to (b), by definition the finite derivation $d$ has cut rank $n$, because it involves an induction inference. Then Lemma \ref{lem:term-induction-embedding} ensures the condition $\lc_{\text{cut}}(p,n)$. The condition $\lc_{\text{acc}}(p,n)$ also follows from Lemma \ref{lem:term-induction-embedding}. Coming to the condition $\lc_{\text{step}}(p,n)$, note that Lemma \ref{lem:term-induction-embedding} implies
\begin{equation*}
\pord(\pred(p,n))=\pord(\ind(v,d_0,d_1,M,m))=\omega\cdot(2\cdot\height(d_0)+1)+m.
\end{equation*}
Clearly we have
\begin{multline*}
\fund(m)*\fund^2(0)^{\omega\cdot(2\cdot\height(d_0)+3)}\vdash\\
\omega\cdot(2\cdot\height(d_0)+1)+m+1\leq_m \omega\cdot(2\cdot\height(d_0)+3).
\end{multline*}
All that remains is to check $m\leq k(\pend(p))$. Indeed, we have already observed the inequality $M\geq m$, which implies that the formula $m\notin N$ does occur in the sequent $\pend(p)=\Gamma\cup\{n\notin N\, |\, n\leq M\}$. To verify the condition $\lc_{\text{end}}(p,n)$ we only need to invoke Lemma \ref{lem:term-induction-embedding}:
\begin{equation*}
\pend(\pred(p,0))=\pend(\ind(v,d_0,d_1,M,m))\subseteq\Gamma\cup\{n\notin N\, |\, n\leq M\}=\pend(p)
\end{equation*}
The case $p=[d^\Gamma]_M$ with $d^\Gamma=\forall_{v,A}d_0^{\Gamma_0}$ is treated similarly, now with the help of Lemma \ref{lem:term-univ-embedding}.\\
So far we have verified the proposition for proofs $p$ of the form $[d]_M$. The cases $p=\ax_\Gamma^\alpha$, $p=\acc_{m,s}p_0$ and $p=\cut_Ap_0p_1$ are easily checked by unravelling the definitions (no applications of the induction hypothesis).\\
Now let us consider the case $p=\mathcal I_{m,B}p_0$ with $B\equiv\forall_x B_0(x)$. From the assumption $p\in\zet^\infty$ we infer $p_0\in\zet^\infty$. Thus (a) follows immediately from the induction hypothesis. Coming to (b), the condition $\lc_{\text{cut}}(p,n)$ carries over from the induction hypothesis since we have $\dcut(p)=\dcut(p_0)$, $\dcut(\prule(p))=\dcut(\prule(p_0))$ and
\begin{equation*}
\dcut(\pred(p,n))=\dcut(\mathcal I_{m,B}\pred(p_0,n))=\dcut(\pred(p_0,n)).
\end{equation*}
The condition $\lc_{\text{acc}}(p,n)$ holds because $\dacc(p)=1$ is already the maximal possible value. To verify the condition $\lc_{\text{step}}(p,n)$, we distinguish three cases according to the form of $\prule(p_0)$: If we have $\prule(p_0)=\ax$ then we also have $\prule(p)=\ax$ and there is nothing to check. If we have $\prule(p_0)=\exists_{m,A}$ then the induction hypothesis tells us
\begin{equation*}
\step(p_0)\vdash\pord(\pred(p_0,n))+2\leq_{3_{\dacc(p_0)+}^{\pend(p_0)}}\pord(p_0).
\end{equation*}
Since we have $\step(p)=\step(p_0)$, $\pord(p)=\pord(p_0)$ and $\pord(\pred(p,n))=\pord(\mathcal I_{m,B}\pred(p_0,n))=\pord(\pred(p_0,n))$ we may conclude
\begin{equation*}
\step(p)\vdash\pord(\pred(p,n))+2\leq_{3_{\dacc(p_0)+}^{k(\pend(p_0))}}\pord(p).
\end{equation*}
In view of $\prule(p)=\exists_{m,A}$ it only remains to establish the inequality
\begin{equation*}
{3_{\dacc(p_0)+}^{k(\pend(p_0))}}\leq {3_{\dacc(p)+}^{k(\pend(p))}}.
\end{equation*}
First, we have $\dacc(p_0)\leq 1=\dacc(p)$. Furthermore, $k(\pend(p_0))\leq k(\pend(p))$ holds because we have
\begin{equation}\label{eq:inclusion-end-sequents-inversion}
\pend(p_0)\subseteq\pend(p)\cup\{B\}
\end{equation}
and because the universal formula $B$ cannot be of the form $n\notin N$. The case where $\prule(p_0)$ is neither of the form $\ax$ nor of the form $\exists_{m,A}$ is similar. The condition $\lc_{\text{end}}(p,n)$ is checked by a case distinction on $\prule(p_0)$: If we have $\prule(p_0)=\ax=\prule(p)$ then the induction hypothesis tells us that $\pend(p_0)$ contains an axiom. Since all formulas which appear in axioms are atomic it follows from (\ref{eq:inclusion-end-sequents-inversion}) that the same axiom is still contained in $\pend(p)$. Most other cases follow similarly, except for $\prule(p_0)=\omega_B$. There we have $\prule(p)=\acc^m$, and $\lc_{\text{end}}(p,n)$ is vacuously true except if $m=n$. In the latter case we use the induction hypothesis to show that we have
\begin{multline*}
 \pend(\pred(p,n))=\pend(\mathcal I_{n,B}\pred(p_0,n))=\\
=\pend(\pred(p_0,n))\backslash\{B\}\cup\{n\notin N,B_0(n)\}\subseteq\\
\subseteq(\pend(p_0)\cup\{n\notin N,B_0(n)\})\backslash\{B\}\cup\{n\notin N,B_0(n)\}\subseteq\\
\subseteq\pend(p_0)\backslash\{B\}\cup\{n\notin N,B_0(n)\}=\pend(p),
\end{multline*}
as required.\\
Let us turn to the case $p=\mathcal R_Cp_0p_1$ with $C\in\Sigma_k$ for some $k\geq 1$. The assumption $p\in\zet^\infty$ tells us that $\pord(p_0)$ meshes with $\pord(p_1)$, and that we have $p_0,p_1\in\zet^\infty$. Concerning all statements that we need to verify, let us distinguish two cases: Assume first that $\prule(p_1)$ is not of the form $\exists_{m,B}$ with $B\equiv C$. Concerning (a), the induction hypothesis tells us that $\pred(p_1,n)$ is an element of $\zet^\infty$. To derive that $\pred(p,n)=\mathcal R_Cp_0\pred(p_1,n)$ is an element of $\zet^\infty$ we still need to see that $\pord(p_0)$ meshes with $\pord(\pred(p_1,n))$. This follows from $\pord(\pred(p_1,n))\leq\pord(p_1)$, which holds by $\lc_{\text{step}}(p_1,n)$ if $\prule(p_1)\neq\ax$ or else by Lemma \ref{lem:rule-reads-off-rule}. The condition $\lc_{\text{cut}}(p_1,n)$ follows from the induction hypothesis by
\begin{equation*}
 \dcut(p)\geq\dcut(p_1)\geq\dcut(\prule(p_1))=\dcut(\prule(p))
\end{equation*}
and
\begin{multline*}
\dcut(p)=\max\{k\dotminus 1,\dcut(p_0),\dcut(p_1)\}\geq\\
\geq\max\{k\dotminus 1,\dcut(p_0),\dcut(\pred(p_1,n))\}=\dcut(\pred(p,n)).
\end{multline*}
The condition $\lc_{\text{acc}}(p,n)$ holds because $\dacc(p)=1$ is the maximal possible value. The condition $\lc_{\text{step}}(p,n)$ is checked by a case distinction on $\prule(p_1)$. We only treat the case $\prule(p_1)=\exists_{m,A}=\prule(p)$ (with $A\not\equiv C$) explicitly, all other cases being similar. The induction hypothesis gives
\begin{equation*}
 \step(p_1)\vdash\pord(\pred(p_1,n))+2\leq_{3_{\dacc(p_1)+}^{k(\pend(p_1))}}\pord(p_1).
\end{equation*}
Since $\pord(p_0)$ meshes with $\pord(p_1)$ we know that $\pord(p_0)+\step(p_1)$ is a step-down argument. This is thus the argument $\step(p)$, and by $\pord(p)=\pord(p_0)+\pord(p_1)$ and $\pord(\pred(p,n))=\pord(p_0)+\pord(\pred(p_1,n))$ we can conclude
\begin{equation*}
 \step(p)\vdash\pord(\pred(p,n))+2\leq_{3_{1+}^{k(\pend(p_1))}}\pord(p).
\end{equation*}
All that remains is to check $k(\pend(p_1))\leq k(\pend(p))$. This follows from
\begin{equation*}
 \pend(p_1)\subseteq\pend(p)\cup\{C\},
\end{equation*}
since the formula $C\in\Sigma_k$ with $k\geq 1$ cannot be of the form $n\notin N$. The condition $\lc_{\text{end}}(p,n)$ is checked by a straightforward case distinction on $\prule(p)=\prule(p_1)$.\\
Still concerning $p=\mathcal R_Cp_0p_1$, let us turn to the crucial case $\prule(p_1)=\exists_{m,C}$. As before, the induction hypothesis implies that $\pord(p_0)$ meshes with $\pord(\pred(p_1,0))$. It also tells us that we have $\pord(\pred(p_1,0))=\pord(\pred(p_1,1))$: Indeed, the condition $\lc_{\text{end}}(p_1,n)$ yields $\pord(\pred(p_1,n))+2=\stepbo(\step(p_1))$, independently of $n$. Concerning (a), it is now straightforward to check $\pred(p,n)\in\zet^\infty$ in all possible cases. Coming to (b), let us first consider the condition $\lc_{\text{cut}}(p,n)$: From $C\in\Sigma_k$ we can infer $\neg C_0(m)\in\Sigma_{k\dotminus 1}$, where we have $C\equiv\exists_x C_0(x)$. Thus we have
\begin{multline*}
 \dcut(p)=\max\{k\dotminus 1,\dcut(p_0),\dcut(p_1)\}\geq k\dotminus 1=\dcut(\cut_{\neg C_0(m)})=\dcut(\prule(p)).
\end{multline*}
Also, it is straightforward to verify
\begin{equation*}
 \dcut(\pred(p,n))\leq\max\{k\dotminus 1,\dcut(p_0),\dcut(\pred(p_1,0)),\dcut(\pred(p_1,1))\}
\end{equation*}
in all possible cases. Using the induction hypothesis we can conclude
\begin{equation*}
 \dcut(p)=\max\{k\dotminus 1,\dcut(p_0),\dcut(p_1)\}\geq\dcut(\pred(p,n)).
\end{equation*}
The condition $\lc_{\text{acc}}(p,n)$ is trivial, as above. Coming to the condition $\lc_{\text{step}}(p,n)$, the induction hypothesis yields
\begin{equation*}
 \step(p_1)\vdash\pord(\pred(p_1,0))+2\leq_{3_{\dacc(p_1)+}^{k(\pend(p_1))}}\pord(p_1).
\end{equation*}
It is straightforward to check that
\begin{equation*}
 \pord(\pred(p,n))=\pord(p_0)+\pord(\pred(p_1,0))+1
\end{equation*}
holds in all possible cases, as well as $\pord(p)=\pord(p_0)+\pord(p_1)$. As above we have $\step(p)=\pord(p_0)+\step(p_1)$ and $k(\pend(p_1))\leq k(\pend(p))$. We can conclude
\begin{equation*}
 \step(p)\vdash\pord(\pred(p,n))+1\leq_{3_{1+}^{k(\pend(p))}}\pord(p),
\end{equation*}
as required. Finally, the condition $\lc_{\text{acc}}(p,n)$ follows from the induction hypothesis as
\begin{multline*}
 \pend(\pred(p,0))=\pend(\mathcal R_C p_0\pred(p_1,1))=\\
=\pend(p_0)\backslash\{\neg C\}\cup\pend(\pred(p_1,1))\backslash\{C\}\subseteq\\
\subseteq\pend(p_0)\backslash\{\neg C\}\cup(\pend(p_1)\cup\{C_0(m)\})\backslash\{C\}\subseteq\pend(p)\cup\{\neg\neg C_0(m)\}
\end{multline*}
and
\begin{multline*}
 \pend(\pred(p,1))=\pend(\mathcal I_{m,\neg C} p_0)\backslash\{m\notin N\}\cup\pend(\mathcal R_C p_0\pred(p_1,0))\backslash\{m\in N\}\subseteq\\
\subseteq(\pend(p_0)\backslash\{\neg C\}\cup\{\neg C_0(m)\})\cup\pend(\pred(p_1,0))\backslash\{C,m\in N\}\subseteq\\
\subseteq\pend(p_0)\backslash\{\neg C\}\cup\{\neg C_0(m)\}\cup(\pend(p_1)\cup\{m\in M\})\backslash\{C,m\in N\}\subseteq\\
\subseteq\pend(p_0)\backslash\{\neg C\}\cup\{\neg C_0(m)\}\cup\pend(p_1)\backslash\{C\}=\pend(p)\cup\{\neg C_0(m)\},
\end{multline*}
as required.\\
We come to the case $p=\mathcal E_0p_0$. The assumption $p\in\zet^\infty$ tells us $\pord(p_0)<\omega^2$ and $\dacc(p_0)=0$, as well as $p_0\in\zet^\infty$. Concerning all statements we need to verify, assume first that $\prule(p_0)$ is not of the form $\cut_A$ with $A\in\bigcup_{k\geq 1}\Sigma_k$. It is easy to deduce (a) from the induction hypothesis. Note in particular that we have $\pord(\pred(p_0,n))\leq\pord(p_0)<\omega^2$ by the condition $\lc_{\text{step}}(p_0,n)$ and by Lemma \ref{lem:rule-reads-off-rule}, as already observed above. Also, the condition $\lc_{\text{acc}}(p_0,n)$ ensures $\dacc(\pred(p_0,n))\leq\dacc(p_0)=0$. Coming to (b), the condition $\lc_{\text{cut}}(p,n)$ is easily deduced from the induction hypothesis (note that $\dcut(\prule(p))=0$ holds by the assumption of the case distinction), and the condition $\lc_{\text{acc}}(p,n)$ follows from $\dacc(p)=1$. As for the condition $\lc_{\text{step}}(p,n)$, we only comment on the case $\prule(p_0)=\exists_{m,B}=\prule(p)$ explicitly, the other cases being similar: There, the induction hypothesis tells us
\begin{equation*}
 \step(p_0)\vdash\pord(\pred(p_0,n))+2\leq_{k(\pend(p_0))}\pord(p_0).
\end{equation*}
Note $\pord(\pred(p,n))=\pord(\mathcal E_0\pred(p_0,n))=3^{\pord(\pred(p_0,n))}$, $\pord(p)=3^{\pord(p_0)}$ and $\step(p)=3^{\step(p_0)}_{+2}$.
Then it suffices to invoke Lemma \ref{lem:transformations-step-downs} to get the required
\begin{equation*}
 \step(p)\vdash\pord(\pred(p,n))+2\leq_{3^{k(\pend(p))}_{1+}}\pord(p).
\end{equation*}
The condition $\lc_{\text{end}}(p,n)$ carries over from the induction hypothesis, because we have $\prule(p)=\prule(p_0)$, as well as  $\pend(p)=\pend(p_0)$ and $\pend(\pred(p,n))=\pend(\mathcal E_0\pred(p_0,n))=\pred(\pred(p_0,n))$.\\
Still for $p=\mathcal E_0p_0$, let us come to the case $\prule(p_0)=\cut_A$ with $A\in\bigcup_{k\geq 1}\Sigma_k$. We begin with (a): As above, the terms $\mathcal E_0\pred(p_0,0)$ and $\mathcal E_0\pred(p_0,1)$ are in $\zet^\infty$. To conclude that $\pred(p,n)=\mathcal R_A(\mathcal E_0\pred(p_0,0))(\mathcal E_0\pred(p_0,1))$ is in $\zet^\infty$ we still need to know that the ordinal $\pord(\mathcal E_0\pred(p_0,0))=3^{\pord(\pred(p_0,0))}$ meshes with $\pord(\mathcal E_0\pred(p_0,1))=3^{\pord(\pred(p_0,1))}$. This holds because $\pord(\pred(p_0,0))$ and $\pord(\pred(p_0,1))$ are the same ordinal, and since $3^\alpha$ meshes with $3^\alpha$ for any $\alpha<\omega^2$. Coming to (b), let us first discuss the condition $\lc_{\text{cut}}(p,n)$: First, observe that $\prule(p)=\acc^0$ means $\dcut(\prule(p))=0$. Next, by induction hypothesis the assumption $\prule(p_0)=\cut_A$ implies $\dcut(p_0)\geq k$. Again using the induction hypothesis we thus obtain the required
\begin{multline*}
 \dcut(\pred(p,n))=\max\{k\dotminus 1,\dcut(\mathcal E_0\pred(p_0,0)),\dcut(\mathcal E_0\pred(p_0,1)))\}=\\
=\max\{k\dotminus 1,\dcut(\pred(p_0,0))\dotminus 1,\dcut(\pred(p_0,1))\dotminus 1\}\leq\dcut(p_0)\dotminus 1=\dcut(p).
\end{multline*}
The condition $\lc_{\text{acc}}(p,n)$ holds because of $\dacc(p)=1$. As for $\lc_{\text{step}}(p,n)$, the induction hypothesis yields
\begin{equation*}
 \step(p_0)\vdash\pord(\pred(p_0,n))+1\leq_{k(\pend(p_0))}\pord(p_0).
\end{equation*}
We also have
\begin{equation*}
 \pord(\pred(p,n))=3^{\pord(\pred(p_0,0))}+3^{\pord(\pred(p_0,1))}=3^{\pord(\pred(p_0,n))}\cdot 2.
\end{equation*}
Using Lemma \ref{lem:transformations-step-downs} we thus get
\begin{equation*}
 \step(p)\vdash\pord(\pred(p,n))+1\leq_{3_{1+}^{k(\pend(p))}}\pord(p).
\end{equation*}
just as required. Finally, the condition $\lc_{\text{end}}(p,n)$ holds because of
\begin{multline*}
 \pend(\pred(p,0))=\pend(\pred(p_0,0))\backslash\{\neg A\}\cup\pend(\pred(p_0,1))\backslash\{A\}\subseteq\\
\subseteq(\pend(p_0)\cup\{\neg A\})\backslash\{\neg A\}\cup(\pend(p_0)\cup\{A\})\backslash\{A\}\subseteq\pend(p_0)=\pend(p).
\end{multline*}
The case $p=\mathcal E p_0$ is similar to $p=\mathcal E_0 p_0$. To apply Lemma \ref{lem:transformations-step-downs}(d), note that $3_{1+}^k\geq 3$ holds for all $k$. Also observe that $\omega^\alpha$ meshes with $\omega^\alpha$, independently of $\alpha$.
\end{proof}

\section{Reading off Bounds from Infinite Proofs}

In the last section we have introduced finite terms which model transformations of infinite proofs, in particular cut elimination. Here, we show how a witness for an existential statement can be read off from an infinite proof of sufficiently low cut rank. In particular, the existence of such a witness yields the reflection principle.\\
More precisely, we will show that a closed $\Sigma_1$-formula $\varphi$ has a witness $n$ with $3^{n+1}<F_\alpha(3^{\max\{1,M\}+1})$ if we have a proof $p\in\zet^\infty$ with $\pord(p)=\alpha$, $\dcut(p)\leq 1$ and
\begin{equation*}
 \pend(p)\subseteq\{\varphi\}\cup\{n\notin N\,|\, n\leq M\}.
\end{equation*}
To establish this claim we will need to manipulate the fast-growing functions which provide the existential bounds. In the theory $\isigma_1$ (even in $\pa$) this is only possible if we provide some bound in advance. Then we can work with the following function (recall from \cite[Section 5.2]{sommer95} that the relation $F_\alpha(n)=m$ is $\Delta_0$-definable, and thus primitive recursive):

\begin{definition}
 We define a primitive recursive function $\pord\times\mathbb N\times\mathbb N\rightarrow\mathbb N$ by setting
\begin{equation*}
 F_\alpha(n;K):=\begin{cases}
                 F_\alpha(n) & \text{if we have $F_\alpha(n)\leq K$},\\
                 K & \text{otherwise}.
                \end{cases}
\end{equation*}
\end{definition}
  
To define the axioms of the infinite proof system, we have already made use of a primitive recursive truth definition for proper $\Delta_0$-formulas (i.e.\ excluding formulas of the form $n\in N$). Now we extend the truth definition to the whole class $\Delta_0\cup\Sigma_1$. Recall that $\Delta_0$ contains the formulas $n\in N$ but not the formulas $n\notin N$, and that none of the special formulas may appear as a proper subformula of a compound formula.

\begin{definition}\label{def:truth-finite-universe}
Given a number $K\in\mathbb N$ we set
\begin{equation*}
 \true(n\in N;K):=\text{``we have $3^{n+1}<K$''}.
\end{equation*}
If $\psi$ is a closed proper $\Delta_0$-formula then we set
\begin{equation*}
 \true(\psi;K):=\text{``the formula $\psi$ is true''}.
\end{equation*}
For a closed $\Sigma_1$-formula $\varphi\equiv\exists_x\psi(x)$ we set
\begin{equation*}
 \true(\varphi;K):=\exists_{n\leq K}(\true(n\in N;K)\land\true(\psi(n);K)).
\end{equation*}
Note that the bound $n\leq K$ is redundant by the first conjunct.\\
Given a sequent $\Gamma$ we set
\begin{equation*}
 \true(\Gamma;K):=\text{``we have $\true(\varphi;K)$ for some formula $\varphi\in\Gamma\cap(\Delta_0\cup\Sigma_1)$''.}
\end{equation*}
We will only use this notion if all formula in $\Gamma$ are in the class $\Delta_0\cup\Sigma_1$ or of the form $n\notin N$. Such sequents will be called $\Sigma_1$-sequents. We will also use the abbreviation
\begin{equation*}
 \false(\Gamma;K):=\neg\true(\Gamma;K),
\end{equation*}
and similarly for single formulas.
\end{definition}

We need to see that this truth definition is monotone:

\begin{lemma}\label{lem:truth-bound-monotonous}
Consider sequents $\Gamma$, $\Gamma'$ with $\Gamma\cap(\Delta_0\cup\Sigma_1)\subseteq\Gamma'$, and bounds $K\leq K'$. Then $\true(\Gamma;K)$ implies $\true(\Gamma';K')$.
\end{lemma}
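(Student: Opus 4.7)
The plan is a direct unfolding of Definition \ref{def:truth-finite-universe}, followed by a case analysis on the form of the witnessing formula. From $\true(\Gamma;K)$ we obtain some $\varphi\in\Gamma\cap(\Delta_0\cup\Sigma_1)$ with $\true(\varphi;K)$. By the hypothesis $\Gamma\cap(\Delta_0\cup\Sigma_1)\subseteq\Gamma'$, this same $\varphi$ lies in $\Gamma'$, and $\varphi$ clearly remains in the syntactic class $\Delta_0\cup\Sigma_1$; so it suffices to prove the single-formula statement $\true(\varphi;K)\Rightarrow\true(\varphi;K')$.

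For the single-formula statement I would distinguish three cases. If $\varphi$ is a closed proper $\Delta_0$-formula, then by definition $\true(\varphi;K)$ and $\true(\varphi;K')$ are both just the statement ``$\varphi$ is true'' and there is nothing to check. If $\varphi$ is of the form $n\in N$, then $\true(\varphi;K)$ unfolds to $3^{n+1}<K$, which together with $K\leq K'$ yields $3^{n+1}<K'$, i.e.\ $\true(\varphi;K')$. Finally, if $\varphi\equiv\exists_x\psi(x)\in\Sigma_1$, then we obtain a witness $n\leq K$ with $3^{n+1}<K$ and $\true(\psi(n);K)$; here $\psi(n)$ is a closed proper $\Delta_0$-formula, because the special formulas cannot appear as proper subformulas, so $\true(\psi(n);K)$ is independent of the bound and identical to $\true(\psi(n);K')$. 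Together with $3^{n+1}<K'$ and the trivial bound $n\leq K\leq K'$, this gives $\true(\varphi;K')$.

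There is essentially no obstacle: the bound $K$ only enters the truth definition in two places, namely the clause for $n\in N$ and the witness range in the $\Sigma_1$-clause, and both are manifestly monotone in $K$. The only point that deserves a brief remark in the writeup is that $\psi(n)$ in the $\Sigma_1$ case is a genuine $\Delta_0$-formula (free of the special symbols), so that its truth value does not depend on $K$ at all.
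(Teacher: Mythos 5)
Your proof is correct and takes the same route the paper does (the paper's own proof is a one-line sketch deferring exactly to the case analysis you spell out). The only substantive point you add, that $\psi(n)$ in the $\Sigma_1$ case is a proper $\Delta_0$-formula so its truth is bound-independent, is right and worth recording.
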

\begin{proof}
Monotonicity in the bound is easily esyablished for formulas of the form $n\in N$, for closed proper $\Delta_0$-formulas, and then for closed $\Sigma_1$-formulas. The statement for sequents follows.
\end{proof}

Recall the situation above, namely that we have a proof $p\in\zet^\infty$ with $\pord(p)=\alpha$, $\dcut(p)\leq 1$ and
\begin{equation*}
 \pend(p)\subseteq\{\varphi\}\cup\{n\notin N\,|\, n\leq M\}
\end{equation*}
for some $\Sigma_1$-formula $\varphi$. Using the new terminology, our goal is to show that we have $\true(\pend(p);K)$ with $K=F_{\pord(p)}(3^{k(\pend(p))}_{1+})$. Under the contradictory assumption that this is not the case, the pair $(p,K)$ is bad in the following sense:

\begin{definition}
 A pair $(p,K)\in\zet^\infty_0\times\mathbb N$ is called bad if the following holds:
\begin{itemize}
 \item $p\in\zet^\infty$, i.e.\ $p$ is a proper proof,
 \item $\dcut(p)\leq 1$,
 \item $\pend(p)$ is a $\Sigma_1$-sequent, i.e.\ contains only formulas from $\Delta_0\cup\Sigma_1$ and formulas of the form $n\notin N$,
 \item $K=F_{\pord(p)}(3^{k(\pend(p))}_{1+})$,
 \item $\false(\pend(p);K)$.
\end{itemize}
\end{definition}

Our goal is to show that there cannot be a bad pair. To achieve this, we will assume that there is a bad pair $(p,K)$. Starting with $(p_0,K_0)=(p,K)$ we will then construct a sequence of bad pairs $(p_n,K_n)$ with the additional absurd property that $K_{n+1}<K_n$ holds for all $n$. Loosely speaking, if $(p_n,K_n)$ is a bad pair then the end-sequent of $p_n$ is false. By the local correctness of $p_n\in\zet^\infty$ this implies that some immediate subproof provided a false premise. In most cases we can simply take $p_{n+1}$ to be that subproof. If the last rule was a cut over a $\Sigma_1$-formula then we may additionally have to invert on the immediate subproof, to assure that its end-sequent is still in $\Sigma_1$. Let us define the functions which construct the described sequence:

\begin{definition}
We define primitive recursive functions $\stepproof:\zet^\infty_0\times\mathbb N\rightarrow\zet^\infty_0$ and $\stepbound:\zet^\infty_0\times\mathbb N\rightarrow\mathbb N$ by the following case distinction:
\begin{gather*}
\stepproof(p,K):=\begin{cases}
                  \pred(p,n) & \text{if $\prule(p)=\acc^n$},\\[4pt]
                  \pred(p,0) & \parbox[t]{8cm}{\raggedright if $\prule(p)=\cut_A$ with $A\in\Delta_0$ and if $\true(A;F_{\pord(\pred(p,1))}(3^{k(\pend(\pred(p,1)))}_{1+};K))$,}\\[20pt]
                  \mathcal I_{m,\neg A}\pred(p,0) & \parbox[t]{8cm}{\raggedright if $\prule(p)=\cut_A$ with $A\equiv\exists_x A_0(x)\in\Sigma_1$ and if $m\leq K$ is minimal such that both $\true(m\in N;F_{\pord(\pred(p,1))}(3^{k(\pend(\pred(p,1)))}_{1+};K))$ and $\true(A_0(m);0)$ hold,}\\[40pt]
                  \pred(p,0) & \parbox[t]{8cm}{\raggedright if $\prule(p)=\exists_{m,A}$ and if $\false(m\in N;F_{\pord(\pred(p,0))}(3^{k(\pend(\pred(p,0)))}_{1+};K))$,}\\[20pt]
                  \pred(p,1) & \text{in all other cases}.
                 \end{cases}\\[7pt]
\stepbound(p,K):=F_{\pord(\stepproof(p,K))}(3^{k(\pend(\stepproof(p,K)))}_{1+};K).
\end{gather*}
To avoid confusion, let us stress that the third case of the case distinction only applies when a number $m$ with the stated property exists. Otherwise we are referred to the last case.\\
Building on these step-functions we define primitive recursive functions $\seqproof:\zet^\infty_0\times\mathbb N\times\mathbb N\rightarrow\zet^\infty_0$ and $\seqbound:\zet^\infty_0\times\mathbb N\times\mathbb N\rightarrow\mathbb N$ by the following simultaneous recursion:
\begin{align*}
 \seqproof(p,K,0) & :=p,\\
 \seqproof(p,K,n+1) & :=\stepproof(\seqproof(p,K,n),\seqbound(p,K,n)),\\[7pt]
 \seqbound(p,K,0) & :=K,\\
 \seqbound(p,K,n+1) & :=\stepbound(\seqproof(p,K,n),\seqbound(p,K,n)).
\end{align*}
\end{definition}

Before we can show that these functions have the desired properties, we need to exhibit the intended semantics of the step-down arguments that appear in the infinite proof system:

\begin{definition}\label{def:step-down-function}
 The primitive recursive function $\fund:\pord\times\mathbb N\times\mathbb N\rightarrow\pord$ is defined as follows
\begin{align*}
 \fund(\alpha,n,0) & :=\alpha,\\
 \fund(\alpha,n,x+1) & :=\{\fund(\alpha,n,x)\}(n).
\end{align*}
We use the abbreviations
\begin{equation*}
 \alpha\searrow_n^x\beta\quad:\Leftrightarrow\quad\exists_{y\leq x}\,\fund(\alpha,n,y)=\beta
\end{equation*}
and
\begin{equation*}
 \alpha\searrow_n\beta\quad:\Leftrightarrow\quad\exists_x\,\alpha\searrow_n^x\beta.
\end{equation*}
\end{definition}

The connection with step-down arguments is as follows:

\begin{proposition}[$\isigma_1$]\label{prop:step-down-from-argument}
If there is a step-down argument $s\vdash\beta\leq_n\alpha$ then we have $\alpha\searrow_n\beta$. In particular, Lemma \ref{lem:transformations-step-downs} assures that $\alpha\searrow_n 0$ holds for any $\alpha<\varepsilon_0$ and any number $n$.
\end{proposition}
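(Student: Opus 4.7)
The plan is a structural induction on the step-down argument $s$, preceded by a collection of auxiliary facts about the primitive recursive relation $\searrow_n$, each of them verifiable in $\isigma_1$. I would establish:
\begin{enumerate}[label=(\roman*)]
\item reflexivity and transitivity of $\searrow_n$;
\item base monotonicity: $\alpha\searrow_n\{\alpha\}(m)$ whenever $m\leq n$;
\item additive translation: if $\gamma\searrow_n\beta$ and $\alpha$ meshes with $\gamma$ (hence with $\beta$, since $\beta\leq\gamma$ by Lemma~\ref{lem:step-down-properties}(b)), then $\alpha+\gamma\searrow_n\alpha+\beta$;
\item exponential monotonicity: $\beta\searrow_n\alpha$ implies $\omega^\beta\searrow_n\omega^\alpha$.
\end{enumerate}
Facts (iii) and (iv) follow by translating iterated fundamental-sequence steps through the clauses $\{\alpha+\lambda\}(n)=\alpha+\{\lambda\}(n)$ (when $\alpha$ meshes with the limit $\lambda$) and $\{\omega^\lambda\}(n)=\omega^{\{\lambda\}(n)}$ (for limit $\lambda$), together with the successor and zero clauses of the Buchholz--Wainer system.

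With these auxiliaries in hand the induction on $s$ is essentially mechanical, each of the four constructors being treated by one of facts (i)--(iv). For the base case $s=\fund(m)^\beta_\alpha$ the assumption $s\vdash\beta\leq_n\alpha$ gives $m=\stepba(s)\leq n$ and $\alpha=\{\beta\}(m)$, so (ii) yields $\beta\searrow_n\alpha$. For a composition $s=s_0*s_1$, with $s_0,s_1$ witnessing $\beta\leq_n\alpha$ and $\gamma\leq_n\beta$, the induction hypothesis combined with (i) yields $\gamma\searrow_n\alpha$. The case $s=\alpha+s_0$ is handled by (iii) and the case $s=\omega^{s_0}$ by (iv), each via the induction hypothesis on $s_0$. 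The ``in particular'' clause is then immediate: Lemma~\ref{lem:transformations-step-downs}(b) furnishes a step-down argument $s_0^\alpha\vdash 0\leq_0\alpha$ for every $\alpha<\varepsilon_0$; Lemma~\ref{lem:step-down-properties}(a) upgrades this to $s_0^\alpha\vdash 0\leq_n\alpha$ for any $n$, and the main statement then delivers $\alpha\searrow_n 0$.

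The main obstacle is fact (ii). It is slightly subtle that reducing the fundamental-sequence index from $n$ to $m\leq n$ can be simulated by iterating the operation at parameter $n$: already for $\omega^{\gamma+1}$ one has $\{\omega^{\gamma+1}\}(n)=\omega^\gamma\cdot(n+1)$, so descending from $\{\alpha\}(n)$ to $\{\alpha\}(m)$ requires peeling off $n-m$ copies of $\omega^\gamma$ by further fundamental-sequence steps, and when $\gamma$ is itself a limit this procedure unwinds recursively. The cleanest strategy is a nested induction on the Cantor normal form of $\alpha$, using (iii) at each step to pull a fixed head sum inside the fundamental sequence, and invoking the outer induction hypothesis on the exponents. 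Since all witnesses $y$ needed for the primitive recursive relation $\fund(\alpha,n,y)=\beta$ can be bounded by explicit primitive recursive terms in $\alpha$, $n$ and $m$, the whole argument is readily formalizable in $\isigma_1$.
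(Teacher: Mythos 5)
Your high-level architecture matches the paper's: a structural induction on the step-down argument $s$, supported by transitivity, a mesh lemma for addition, a lift through $\omega^{(\cdot)}$, and a ``base monotonicity'' lemma stating that $\alpha\searrow_n\{\alpha\}(m)$ for $m\leq n$. You also correctly identify fact~(ii) as the crux. The gap is in your treatment of it. Consider the successor-exponent case $\alpha=\beta+\omega^{\delta+1}$: a single step at parameter $n$ takes you to $\beta+\omega^\delta\cdot(n+1)$, and to reach $\{\alpha\}(m)=\beta+\omega^\delta\cdot(m+1)$ you must peel off $n-m$ copies of $\omega^\delta$; each peeling step $\omega^\delta\cdot(k+1)\searrow_n\omega^\delta\cdot k$ reduces, via your fact~(iii), to $\omega^\delta\searrow_n 0$. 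This \emph{termination} statement is not an instance of the inductive statement $\alpha'\searrow_n\{\alpha'\}(m')$ for smaller $\alpha'$, so ``invoking the outer induction hypothesis on the exponents'' does not produce it. You appeal to ``explicit primitive recursive bounds'' to wave this away, but exhibiting such a bound is precisely the nontrivial content that must be supplied, and it is not a corollary of the remaining facts.

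The paper resolves this by a different decomposition. It never proves the unbounded $\alpha\searrow_n\{\alpha\}(m)$ in isolation. Instead all auxiliary lemmas are stated and proved in a \emph{conditional, bounded} form: one assumes $\alpha\searrow_n^x 0$ for an explicit $x$ and concludes the corresponding descent \emph{with the same bound} $x$ (Lemmas~\ref{lem:trans-step-downs}, \ref{lem:step-down-mesh}, \ref{lem:step-down-lift-omega}, \ref{lem:step-down-finite-multiples}, \ref{lem:increase-setp-down}); an extra monotonicity observation (Lemma~\ref{lem:fund-sequences-monotone}) is needed in the composition case $s=s_0*s_1$ to shrink $2x$ back to $x$ so the induction hypothesis closes. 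The termination $\alpha\searrow_n 0$ is then proved in a single separate lemma (Lemma~\ref{lem:all-step-downs-to-zero}) via a descending numerical measure ($\mathrm{Num}$), and only then are the two pieces combined to obtain Proposition~\ref{prop:step-down-from-argument}. This decomposition is what makes the argument go through in $\isigma_1$: the structural lemmas never need to produce a bound, only to preserve one, so no explicit term in $\alpha,n,m$ has to be computed within the induction. To repair your proposal, you would need to either adopt this conditional set-up, or separately prove $\alpha\searrow_n 0$ with an explicit bound (say, the paper's $\mathrm{Num}(\alpha,n)$) \emph{before} attempting fact~(ii), and then feed that bound into the successor-exponent case. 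As written, the proof of fact~(ii) does not close.
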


This proposition will be proved in the next section. Here, we state how the relation $\alpha\searrow_n\beta$ relates to the fast-growing hierarchy. Recall that $F_\alpha(n)\!\downarrow$ abbreviates the formula $\exists_y\, F_\alpha(n)=y$.

\begin{lemma}\label{lem:connection-fast-step}
 The following holds:
\begin{enumerate}[label=(\alph*)]
 \item If we have $\alpha\searrow_n\beta$ and $F_\alpha(n)\!\downarrow$ then we have $F_\beta(n)\!\downarrow$ and $F_\beta(n)\leq F_\alpha(n)$.
 \item If we have $F_\alpha(n)\!\downarrow$ and $n'\leq n$ then we have $F_\alpha(n')\!\downarrow$ and $F_\alpha(n')\leq F_\alpha(n)$.
\end{enumerate}
\end{lemma}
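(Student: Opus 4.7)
The plan is to prove (a) by induction on the witness $x$ appearing in $\alpha\searrow_n^x\beta$, and to derive (b) from the standard monotonicity properties of the fast-growing hierarchy established in \cite{sommer95}.

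For (a), the base case $x=0$ is trivial since then $\alpha=\beta$. For the step, let $\gamma$ be such that $\alpha\searrow_n^x\gamma$ and $\beta=\{\gamma\}(n)$. The induction hypothesis yields $F_\gamma(n)\!\downarrow$ and $F_\gamma(n)\leq F_\alpha(n)$, so it suffices to show $F_\beta(n)\!\downarrow$ and $F_\beta(n)\leq F_\gamma(n)$. A case distinction on $\gamma$ settles this: if $\gamma=0$ then $\beta=\{0\}(n)=0=\gamma$ and nothing is to show; if $\gamma=\delta+1$ then $\beta=\delta$, the defining clause $F_\gamma(n)=F_\delta^{n+1}(n)$ forces the intermediate value $F_\delta(n)=F_\beta(n)$ to be defined, and the inequality $F_\delta(n)\leq F_\delta^{n+1}(n)$ follows from the elementary fact $F_\delta(m)\geq m$; finally, if $\gamma$ is a limit then $F_\gamma(n)=F_{\{\gamma\}(n)}(n)=F_\beta(n)$ directly from the defining clause at limits.

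For (b), I would invoke the monotonicity properties of the fast-growing hierarchy worked out inside $\isigma_1$ in \cite[Section 5.2]{sommer95} for the $\Delta_0$-computation predicate $F_\alpha^i(x)=y$. The key fact is that $F_\alpha(n)$ is non-decreasing in $n$ whenever defined. Concretely one argues by a two-layer induction on the computation---outer on the ordinal tag and inner on the iteration depth---that every intermediate value in the partial computation of $F_\alpha(n')$ is bounded by the corresponding value in the (terminating) computation of $F_\alpha(n)$; hence the computation of $F_\alpha(n')$ terminates as well, with output at most $F_\alpha(n)$.

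The main obstacle lies in the limit case of (b): the defining clause $F_\lambda(n)=F_{\{\lambda\}(n)}(n)$ couples the outer argument $n$ with the index into the fundamental sequence, so naive monotonicity in the argument is not enough. This is resolved by the fact that the Buchholz-Wainer fundamental sequences are themselves monotone in $n$, so that $\{\lambda\}(n')\leq\{\lambda\}(n)$ whenever $n'\leq n$; combining this with the outer induction hypothesis (monotonicity of $F_\alpha$ in $\alpha$ along the fundamental sequence) closes the argument. All the ingredients are already made available in $\isigma_1$ by \cite{sommer95}, so no further work is needed here.
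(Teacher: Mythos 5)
The paper itself gives no direct proof of this lemma; it simply cites \cite[Lemma 2.3]{rathjen13} and \cite[Section 5.2]{sommer95}. So your task is really to reconstruct the argument, and part (a) of your reconstruction is sound: induction on the step-down length $x$, a case split on $\gamma$, and $F_\delta(m)\geq m$ (Sommer's Proposition 5.4, which gives $F_\delta^{n+1}(n)\geq F_\delta(n)$ without invoking part (b)) handle every case.

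Part (b) has a genuine gap in exactly the place you flag as the main obstacle. From $\{\lambda\}(n')\leq\{\lambda\}(n)$ you cannot conclude $F_{\{\lambda\}(n')}(\cdot)\leq F_{\{\lambda\}(n)}(\cdot)$: the fast-growing hierarchy is not pointwise monotone in its ordinal subscript, and a bare order inequality between two members of a fundamental sequence is not usable. Moreover, the outer induction hypothesis for (b) supplies only monotonicity of $F_\alpha$ in the \emph{argument}; it does not supply ``monotonicity of $F_\alpha$ in $\alpha$ along the fundamental sequence,'' which is the fact you actually invoke. What closes the limit case is a step-down relation $\{\lambda\}(n)\searrow_n\{\lambda\}(n')$ together with part (a). Establishing that relation is non-trivial: from $\lambda\searrow_n 0$ (Lemma \ref{lem:all-step-downs-to-zero}) and $n'\leq n$, Lemma \ref{lem:increase-setp-down} gives $\lambda\searrow_n\{\lambda\}(n')$, and then Lemma \ref{lem:trans-step-downs} together with $\lambda\searrow_n\{\lambda\}(n)$ yields $\{\lambda\}(n)\searrow_n\{\lambda\}(n')$. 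Part (a) (at argument $n$) then gives $F_{\{\lambda\}(n')}(n)\leq F_{\{\lambda\}(n)}(n)$, and the (b)-IH applied to $\{\lambda\}(n')$ gives $F_{\{\lambda\}(n')}(n')\leq F_{\{\lambda\}(n')}(n)$, which combine to the desired bound. Your sketch omits this step-down machinery (Rathjen's Lemmas 2.5 and 2.12, or the paper's Section 4) and so, as written, does not close the limit case.
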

\begin{proof}
 This is \cite[Lemma 2.3]{rathjen13}, building on \cite[Section 5.2]{sommer95}.
\end{proof}

We want to show that the function $(p,K)\mapsto(\stepproof(p,K),\stepbound(p,K))$ preserves bad pairs. Let us first single out a fact that will be used several times:

\begin{lemma}\label{lem:bad-pair-bound-inequality}
 Consider $K=F_\alpha(k)$ with $k\geq 2$, a step-down argument $s\vdash\beta+1\leq_k\alpha$, and a number $k'\leq F_\beta(k;K)$. Then we have
\begin{equation*}
 F_\beta(k';K)=F_\beta(k')<K.
\end{equation*}
In the presence of the other assumptions, the condition $k'\leq F_\beta(k;K)$ is weaker than the condition $k'\leq k$.
\end{lemma}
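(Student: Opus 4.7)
The plan is to chain together several uses of the connection between step-down arguments and the fast-growing hierarchy (Proposition~\ref{prop:step-down-from-argument} and Lemma~\ref{lem:connection-fast-step}) with a small exploitation of the Buchholz--Wainer recursion $F_{\beta+1}(k) = F_\beta^{k+1}(k)$, where the ``$+1$'' in this formula together with the hypothesis $k\geq 2$ is what upgrades the inequalities to strict ones.

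First I would establish that $F_\beta(k;K) = F_\beta(k)$. By Proposition~\ref{prop:step-down-from-argument} the step-down argument $s\vdash\beta+1\leq_k\alpha$ yields $\alpha\searrow_k\beta+1$, and since $\{\beta+1\}(k)=\beta$ we also get $\beta+1\searrow_k\beta$, hence $\alpha\searrow_k\beta$. Lemma~\ref{lem:connection-fast-step}(a) applied to $K = F_\alpha(k)\!\downarrow$ then gives $F_{\beta+1}(k)\!\downarrow$ with $F_{\beta+1}(k)\leq K$, and in turn $F_\beta(k)\!\downarrow$ with $F_\beta(k)\leq F_{\beta+1}(k)\leq K$. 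The last inequality is exactly what is needed to conclude $F_\beta(k;K)=F_\beta(k)$ from the definition of the capped hierarchy.

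Next I would obtain the strict inequality. From $k'\leq F_\beta(k;K) = F_\beta(k)$ and Lemma~\ref{lem:connection-fast-step}(b), I get $F_\beta(k')\!\downarrow$ and $F_\beta(k')\leq F_\beta(F_\beta(k)) = F_\beta^2(k)$. Now I invoke the Buchholz--Wainer clause $F_{\beta+1}(k) = F_\beta^{k+1}(k)$. Since $k\geq 2$ we have $k+1\geq 3$; using the well-known facts that $F_\beta$ is strictly increasing and that $F_\beta(n)>n$ on the relevant inputs, this gives
\begin{equation*}
F_\beta(k')\leq F_\beta^2(k) < F_\beta^3(k) \leq F_\beta^{k+1}(k) = F_{\beta+1}(k) \leq K.
\end{equation*}
Hence $F_\beta(k')<K$, which by definition of the capped function yields $F_\beta(k';K)=F_\beta(k')<K$, as required.

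Finally the secondary claim is immediate: once we know $F_\beta(k;K)=F_\beta(k)$, the standard lower bound $F_\beta(k)\geq k$ (a basic property of the fast-growing hierarchy, formalisable in $\isigma_1$) shows that $k'\leq k$ forces $k'\leq F_\beta(k;K)$. The only step that requires any care is the strict inequality $F_\beta^2(k)<F_\beta^3(k)$; here everything rides on the assumption $k\geq 2$, which guarantees enough extra iterations in $F_{\beta+1}(k) = F_\beta^{k+1}(k)$ to separate it strictly from $F_\beta^2(k)$. This is precisely the technical effect that the author mentioned earlier when motivating the use of $\alpha+1$ (rather than $\alpha$) in the accumulation rule.
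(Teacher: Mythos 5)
Your proof is correct and takes essentially the same route as the paper: both rest on $F_{\beta+1}(k)=F_\beta^{k+1}(k)$ together with the chain $K\geq F_\beta^{k+1}(k)\geq F_\beta^3(k)>F_\beta^2(k)\geq F_\beta(k')$, where $k\geq 2$ supplies the strict middle inequality. The only (cosmetic) difference is that you establish $F_\beta(k;K)=F_\beta(k)$ as a separate first step via $\alpha\searrow_k\beta$, whereas the paper lets this fall out of the same chain; also, for full rigour you should note that $F_\beta^2(k)\!\downarrow$ (needed before invoking Lemma~\ref{lem:connection-fast-step}(b) at $n=F_\beta(k)$) already follows from $F_\beta^{k+1}(k)\!\downarrow$ and $k+1\geq 3$, so that observation should formally precede the application of part (b), but this is a matter of ordering rather than substance.
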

\begin{proof}
 Proposition \ref{prop:step-down-from-argument} gives $\alpha\searrow_k\beta+1$. By Lemma \ref{lem:connection-fast-step} the value $F_{\beta+1}(k)$ is defined and bounded by the number $K$. Using the $\Delta_0$-formula $F_\alpha^i(n)=m$ from \cite[Section 5.2]{sommer95} we can work with iterates of fast-growing functions. Thus \cite[Theorem 5.3]{sommer95} tells us that $F_\beta^{k+1}(k)$ is defined and equal to $F_{\beta+1}(k)$. Using \cite[Theorem 5.3, Proposition 5.4]{sommer95} and Lemma \ref{lem:connection-fast-step} we get the following chain of inequalities:
\begin{equation*}
 K\geq F_\beta^{k+1}(k)\geq F_\beta^3(k)>F_\beta^2(k)\geq F_\beta(k').
\end{equation*}
In particular all involved expressions are defined.\\
Concerning the alternative condition, the above shows that the conditions without $k'\leq F_\beta(k;K)$ imply that $F_\beta(k)$ is defined and bounded by $K$. Thus we have $F_\beta(k;K)=F_\beta(k)$, and \cite[Proposition 5.4]{sommer95} gives $k\leq F_\beta(k;K)$.
\end{proof}

Now we can show the promised result:

\begin{proposition}\label{lem:step-preserves-bad-pairs}
 If $(p,K)$ is a bad pair then $(\stepproof(p,K),\stepbound(p,K))$ is a bad pair as well. In this case we have $\stepbound(p,K)<K$.
\end{proposition}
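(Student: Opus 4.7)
The plan is to argue by case analysis on $\prule(p)$, using Proposition \ref{prop:pred-proper-proof-proper} to know that $p$ is locally correct. Several cases drop out immediately. The rules $\lor_{j,A_0\lor A_1}$, $\land_{A_0\land A_1}$, and $\omega_{\forall_x A(x)}$ cannot occur, since their principal formula would have to appear in $\pend(p)$ (by $\lc_{\text{end}}$) but does not lie in $\Delta_0\cup\Sigma_1$ and is not of the form $n\notin N$, contradicting that $\pend(p)$ is a $\Sigma_1$-sequent. The axiom case is also impossible: $\lc_{\text{step}}$ ensures $\pord(p)\geq 2$, so $K=F_{\pord(p)}(3^{k(\pend(p))+1})$ comfortably dominates $3^{k(\pend(p))+1}$, and an $N$-axiom in $\pend(p)$ then forces $\true(n\in N;K)$ (since $n\leq k(\pend(p))$), while a truth axiom directly supplies a true atomic formula; either way $\true(\pend(p);K)$ contradicts badness.

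In each of the remaining cases $\acc^n$, $\cut_A$ and $\exists_{m,\exists_x A_0(x)}$, Proposition \ref{prop:pred-proper-proof-proper} delivers a proper subproof. First I would establish the bound inequality uniformly: $\lc_{\text{step}}(p,n)$ provides a step-down $\step(p)\vdash\pord(\pred(p,n))+c\leq_{3_{\dacc(p)+}^{k(\pend(p))}}\pord(p)$ with $c\in\{1,2\}$. After weakening the rank to $3^{k(\pend(p))+1}$ via Lemma \ref{lem:step-down-properties}(a), Lemma \ref{lem:bad-pair-bound-inequality} --- applied with $\beta+1$ in place of $\beta$ when $c=2$, combined with monotonicity of $F_\beta$ in its argument --- yields both $\stepbound(p,K)=F_{\pord(\stepproof(p,K))}(3^{k(\pend(\stepproof(p,K)))+1})$ and $\stepbound(p,K)<K$. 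In every case, $\pend(\stepproof(p,K))$ extends $\pend(p)$ only by atomic formulas or formulas $m\notin N$ with $m\leq k(\pend(p))$, which simultaneously preserves the $\Sigma_1$-sequent property and the bound $k(\pend(\stepproof(p,K)))\leq k(\pend(p))$; the inequality $\dcut(\stepproof(p,K))\leq 1$ follows from the defining equations for $\dcut$.

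Falsity at the new bound is then verified case by case. For $\acc^n$, $\pend(\pred(p,n))\subseteq\pend(p)$ and Lemma \ref{lem:truth-bound-monotonous} transfers falsity contrapositively. For $\cut_A$ with $A\in\Delta_0$, the step-function branches according to $\true(A;K')$, where Lemma \ref{lem:pred-ordinal-independent-n} makes $K'$ a common bound for the two subproofs; in each branch the newly added formula ($\neg A$ or $A$) is false at $K'$, and side formulas inherit falsity by monotonicity. For $\cut_A$ with $A\equiv\exists_x A_0(x)\in\Sigma_1$, the key sub-case is when a minimal witness $m\leq K$ exists: $\stepproof(p,K)=\mathcal I_{m,\neg A}\pred(p,0)$ has $\neg A_0(m)$ false (because $\true(A_0(m);0)$), the irrelevant $m\notin N$, and side formulas from $\pend(p)$. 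When no witness exists, $\stepproof(p,K)=\pred(p,1)$ contains $A$; any would-be witness bounded by $\stepbound(p,K)\leq K$ would satisfy the sub-case test and so none exist, giving $\false(A;\stepbound(p,K))$. For $\exists_{m,\exists_x A_0(x)}$ the formula $\exists_x A_0(x)\in\pend(p)$ is already false at $K$: if $\true(m\in N;\stepbound(p,K))$ then $3^{m+1}<K$ forces $A_0(m)$ false, transferring falsity to $\pred(p,1)$'s end-sequent; otherwise $\pred(p,0)$ is chosen and $m\in N$ is itself false at the new bound.

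The main obstacle is not a deep idea but careful bookkeeping of the ordinal and $k$-values across subcases, using Lemma \ref{lem:pred-ordinal-independent-n} repeatedly so that the two subproofs of a cut or existential rule share the ordinal tag and the step-function is symmetric between its branches. The $\Sigma_1$-cut sub-case combined with inversion is the most delicate: one has to confirm that $\mathcal I_{m,\neg A}$ preserves the ordinal tag and cut rank, that the $\Sigma_1$-sequent property survives replacing $\neg A$ by $\{m\notin N,\neg A_0(m)\}$, and that the $+2$ margin in $\lc_{\text{step}}(p,n)$ for the existential rule is precisely what allows Lemma \ref{lem:bad-pair-bound-inequality} to bite in that clause.
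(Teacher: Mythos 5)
Your overall structure matches the paper's: case distinction on $\prule(p)$, ruling out $\ax$, $\lor$, $\land$, $\omega$, and handling $\acc^n$, $\cut_A$, $\exists_{m,A}$ one by one. But there is a genuine gap in your attempt to handle the bound uniformly.

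You assert: ``In every case, $\pend(\stepproof(p,K))$ extends $\pend(p)$ only by atomic formulas or formulas $m\notin N$ with $m\leq k(\pend(p))$, which simultaneously preserves the $\Sigma_1$-sequent property and the bound $k(\pend(\stepproof(p,K)))\leq k(\pend(p))$.'' This is false in two of the cut sub-cases, and those are precisely the delicate ones. When $\prule(p)=\cut_A$ with $A\equiv n\in N$ and the step-function descends to $\pred(p,0)$, the end-sequent gains $n\notin N$; there is no reason for $n\leq k(\pend(p))$. Similarly, when $A\equiv\exists_x A_0(x)\in\Sigma_1$ and the witness $m$ exists, the end-sequent of $\mathcal I_{m,\neg A}\pred(p,0)$ gains $m\notin N$; the witness is only constrained by $3^{m+1}<F_{\pord(\pred(p,1))}\bigl(3^{k(\pend(\pred(p,1)))}_{1+};K\bigr)$, so $m$ can be roughly as large as $\log_3 K$, far exceeding $k(\pend(p))$. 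In these cases $k(\pend(\stepproof(p,K)))$ genuinely jumps upward, and your uniform derivation of $\stepbound(p,K)<K$ via Lemma~\ref{lem:bad-pair-bound-inequality} no longer goes through with $k'=3^{k(\pend(\stepproof(p,K)))}_{1+}$ plugged directly in as though $k'\leq k$.

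This is precisely why Lemma~\ref{lem:bad-pair-bound-inequality} is stated with the weaker hypothesis $k'\leq F_\beta(k;K)$ rather than $k'\leq k$, and why the paper first establishes an explicit upper bound of the form $3^{m+1}<F_{\pord(\stepproof(p,K))}\bigl(3^{k(\pend(p))}_{1+};K\bigr)$ before invoking the lemma, performing a case distinction on whether $3^{k(\pend(\stepproof(p,K)))}_{1+}$ is $3^{k(\pend(p))}_{1+}$ or $3^{m+1}$. You do flag the $\Sigma_1$-cut inversion case as delicate, but the concerns you list (ordinal tag, cut rank, the $+2$ margin) do not include the increase of the $k$-value, and your earlier blanket claim forecloses it. A correct proof must drop the uniform bound $k(\pend(\stepproof(p,K)))\leq k(\pend(p))$ and instead verify, separately in those two sub-cases, that $3^{k(\pend(\stepproof(p,K)))}_{1+}\leq F_{\pord(\stepproof(p,K))}\bigl(3^{k(\pend(p))}_{1+};K\bigr)$ before applying the first half of Lemma~\ref{lem:bad-pair-bound-inequality}.
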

\begin{proof}
The proof is by case distinction on $\prule(p)$. In all cases, Proposition \ref{prop:pred-proper-proof-proper} immediately yields $\stepproof(p,K)\in\zet^\infty$ and $\dcut(\stepproof(p,K))\leq\dcut(p)\leq 1$. The other verifications go as follows:\\
Let us first show that $\prule(p)=\ax$ is impossible: Indeed, by the local correctness conditions guaranteed by Proposition \ref{prop:pred-proper-proof-proper}, the assumption $\prule(p)=\ax$ would imply that $\pend(p)$ contains an axiom and that we have $\pord(p)\geq 2$. We show that this implies $\true(\pend(p);K)$, contradicting the fact that $(p,K)$ is a bad pair. If $\pend(p)$ contains a truth-axiom we immediately have $\true(\pend(p);K)$, independently of $K$. Now assume that $\pend(p)$ contains the $N$-axiom $\{n\notin N,(n+1)\in N\}$ (the axiom $\{n\notin N,n\in N\}$ is easier). We want to show $\true((n+1)\in N;K)$, or equivalently $3^{n+2}<K$. Since the formula $n\notin N$ occurs in $\pend(p)$ we have $n\leq k(\pend(p))$, and thus $3^{n+1}\leq 3_{1+}^{k(\pend(p))}$. Using \cite[Proposition 5.4]{sommer95} we get
\begin{equation*}
 3^{n+2}\leq(3^{n+1})^2\leq\left(3_{1+}^{k(\pend(p))}\right)^2<F_{\pord(p)}(3^{k(\pend(p))}_{1+})=K.
\end{equation*}
Next, observe that $\prule(p)$ cannot be of the form $\lor_{j,A}$, $\land_A$ or $\omega_A$: Otherwise, the local correctness condition due to Proposition \ref{prop:pred-proper-proof-proper} would force $\pend(p)$ to contain a disjunction, a conjunction or a universal formula, contradicting the assumption that $\pend(p)$ is a $\Sigma_1$-sequent (recall that in our setting all $\Delta_0$-formulas are atomic).\\
Let us continue with the case $\prule(p)=\acc^n$. There we have $\stepproof(p)=\pred(p,n)$, and thus $
 \pend(\stepproof(p,K))\subseteq\pend(p)$ by the local correctness of $p$. This ensures that $\pend(\stepproof(p,K))$ is a $\Sigma_1$-sequent. It also implies that we have $k(\pend(\stepproof(p,K)))\leq k(\pend(p))$ and thus 
\begin{equation*}
 2\leq 3^{k(\pend(\stepproof(p,K)))}_{1+}\leq 3^{k(\pend(p))}_{1+}.
\end{equation*}
The local correctness of $p$ (together with Lemma \ref{lem:step-down-properties}, in case $\dacc(p)=0$) also yields
\begin{equation*}
\step(p)\vdash\pord(\stepproof(p,K))+1\leq_{3^{k(\pend(p))}_{1+}}\pord(p).
\end{equation*}
In this situation Lemma \ref{lem:bad-pair-bound-inequality} gives
\begin{equation*}
 \stepbound(p,K)=F_{\pord(\stepproof(p,K))}(3^{k(\pend(\stepproof(p,K)))}_{1+})<K.
\end{equation*}
Finally, $\false(\pend(\stepproof(p,K));\stepbound(p,K))$ follows from the assumption $\false(\pend(p);K)$ by Lemma \ref{lem:truth-bound-monotonous}.\\
We come to the case $\prule(p)=\exists_{m,A}$. Since $A$ must occur in $\pend(p)$ this is only possible if $A$ is a $\Sigma_1$-formula. We distinguish two cases: First, assume that we have $\false(m\in N;F_{\pord(\pred(p,0))}(3^{k(\pend(\pred(p,0)))}_{1+};K))$. By definition we then have $\stepproof(p,K)=\pred(p,0)$. The local correctness of $p$ tells us
\begin{equation*}
\pend(\stepproof(p,K))\subseteq\pend(p)\cup\{m\in N\}.
\end{equation*}
This implies that $\pend(\stepproof(p,K))$ is a $\Sigma_1$-sequent and that we have
\begin{equation*}
 2\leq 3^{k(\pend(\stepproof(p,K)))}_{1+}\leq 3^{k(\pend(p))}_{1+}.
\end{equation*}
Also by the local correctness of $p$ we have
\begin{equation*}
\fund(0)*\step(p)\vdash\pord(\stepproof(p,K))+1\leq_{3^{k(\pend(p))}_{1+}}\pord(p),
\end{equation*}
and again Lemma \ref{lem:bad-pair-bound-inequality} yields
\begin{equation*}
\stepbound(p,K)=F_{\pord(\stepproof(p,K))}(3^{k(\pend(\stepproof(p,K)))}_{1+})<K.
\end{equation*}
To obtain the condition $\false(\pend(\stepproof(p,K));\stepbound(p,K))$ it suffices to show $\false(\pend(p);\stepbound(p,K))$ and $\false(m\in N;\stepbound(p,K))$. The first half follows from $\false(\pend(p);K)$ by Lemma \ref{lem:truth-bound-monotonous}, and the second half is the assumption of the case distinction.\\
Still for $\prule(p)=\exists_{m,A}$, assume now that we have
\begin{equation}\label{eq:case-distinction-existential-alternative}
 \true(m\in N;F_{\pord(\pred(p,0))}(3^{k(\pend(\pred(p,0)))}_{1+};K)).
\end{equation}
By definition we then have $\stepproof(p,K)=\pred(p,1)$, so that the local correctness of $p$ yields
\begin{equation*}
 \pend(\stepproof(p,K))\subseteq\pend(p)\cup\{A_0(m)\}.
\end{equation*}
This tells us that $\pend(\stepproof(p,K))$ is a $\Sigma_1$-sequent. Recall that the special formulas $n\in N$ and $n\notin N$ are not allowed as building blocks of compound formulas. In particular, the subformula $A_0(m)$ of $A$ is a proper $\Delta_0$-formula, and again we have
\begin{equation*}
 2\leq 3^{k(\pend(\stepproof(p,K)))}_{1+}\leq 3^{k(\pend(p))}_{1+}.
\end{equation*}
As above we can conclude
\begin{equation*}
\stepbound(p,K)=F_{\pord(\stepproof(p,K))}(3^{k(\pend(\stepproof(p,K)))}_{1+})<K.
\end{equation*}
It remains to establish $\false(\stepproof(p,K);\stepbound(p,K))$, which is easily reduced to $\false(A_0(m);K)$. The local correctness of $p$ implies that the formula $A\equiv\exists_x A_0(x)$ occurs in $\pend(p)$. Thus the assumption $\false(\pend(p);K)$ implies $\false(A;K)$. In view of Definition \ref{def:truth-finite-universe} this means that we cannot at the same time have $\true(m\in N;K)$ and $\true(A_0(m);K)$. So to obtain $\false(A_0(m);K)$ we only need to see $\true(m\in N;K)$. This follows from (\ref{eq:case-distinction-existential-alternative}) and the inequality $F_{\pord(\pred(p,0))}(3^{k(\pend(\pred(p,0)))}_{1+};K)\leq K$. Note that $F_\alpha(n;K)\leq K$ holds holds by the definition of the function $(\alpha,n,K)\mapsto F_\alpha(n;K)$.\\
It remains to treat the case $\prule(p)=\cut_A$. By the local correctness of $p$ we have $\dcut(\prule(p))\leq\dcut(p)\leq 1$. Thus $A$ must be in the class $\Delta_0\cup\Sigma_1$. Let us consider the different possibilities: The case where $A$ is a proper $\Delta_0$-formula is easy, because then $\true(A;K)$ is independent of $K$. Now assume that $A$ is of the form $n\in N$. We have to look at two cases: First, assume that we have $\true(n\in N;F_{\pord(\pred(p,1))}(3^{k(\pend(\pred(p,1)))}_{1+};K))$, and thus $\stepproof(p,K)=\pred(p,0)$. Let us begin with some preparations: The assumption of the case distinction is equivalent to
\begin{equation*}
 3^{n+1}<F_{\pord(\pred(p,1))}(3^{k(\pend(\pred(p,1)))}_{1+};K)).
\end{equation*}
Invoking Lemma \ref{lem:pred-ordinal-independent-n}, we may replace $\pord(\pred(p,1))$ by $\pord(\pred(p,0))$, i.e.\ by $\pord(\stepproof(p,K))$. Furthermore, the local correctness of $p$ gives
\begin{equation*}
 \pend(\pred(p,1))\subseteq\pend(p)\cup\{n\in N\},
\end{equation*}
which implies $3^{k(\pend(\pred(p,1)))}_{1+}\leq 3^{k(\pend(p))}_{1+}$. Using \cite[Proposition 5.4]{sommer95} it is easy to deduce that the inequality
\begin{equation}\label{eq:bounds-comp-cut-auxiliary}
 3^{n+1}<F_{\pord(\stepproof(p,K))}(3^{k(\pend(p))}_{1+};K))
\end{equation}
holds as well. After this preparation, let us come to the required verifications: The local correctness of $p$ gives
\begin{equation}\label{eq:inclusion-sequents-cut-auxiliary}
 \pend(\stepproof(p,K))=\pend(\pred(p,0))\subseteq\pend(p)\cup\{n\notin N\}.
\end{equation}
This means that $\pend(\stepproof(p,K))$ is a $\Sigma_1$-sequent, and it entails
\begin{equation}\label{eq:bounds-comp-cut-cases}
 3^{k(\pend(\stepproof(p,K)))}_{1+}\leq\max\{3^{k(\pend(p))}_{1+},3^{n+1}\}.
\end{equation}
Furthermore, the local correctness of $p$ yields
\begin{equation*}
\step(p)\vdash\pord(\stepproof(p,K))+1\leq_{3^{k(\pend(p))}_{1+}}\pord(p),
\end{equation*}
I claim that we have
\begin{equation*}
 3^{k(\pend(\stepproof(p,K)))}_{1+}\leq F_{\pord(\stepproof(p,K))}(3^{k(\pend(p))}_{1+};K)).
\end{equation*}
This is shown by a case distinction on the maximum in (\ref{eq:bounds-comp-cut-cases}). One case is provided by (\ref{eq:bounds-comp-cut-auxiliary}). The other case holds by the second part of Lemma \ref{lem:bad-pair-bound-inequality}. Having established this, the first part of Lemma \ref{lem:bad-pair-bound-inequality} provides the desired
\begin{equation*}
\stepbound(p,K)=F_{\pord(\stepproof(p,K))}(3^{k(\pend(\stepproof(p,K)))}_{1+})<K.
\end{equation*}
Finally, $\false(\pend(p);K)$ implies $\false(\pend(\stepproof(p,K));\stepbound(p,K))$ by Lemma \ref{lem:truth-bound-monotonous}. To see that this is the case, recall that the special formula $n\notin N$ is not in the class $\Delta_0$ (nor in $\Sigma_1$), so (\ref{eq:inclusion-sequents-cut-auxiliary}) does indeed imply
\begin{equation*}
 \pend(\stepproof(p,K))\cap(\Delta_0\cup\Sigma_1)\subseteq\pend(p).
\end{equation*}
Still concerning $\prule(p)=\cut_A$ with $A\equiv (n\in N)$, assume now that we have $\false(n\in N;F_{\pord(\pred(p,1))}(3^{k(\pend(\pred(p,1)))}_{1+};K))$. Then we have $\stepproof(p,K)=\pred(p,1)$ and thus
\begin{equation*}
 \pend(\stepproof(p,K))\subseteq\pend(p)\cup\{n\in N\}.
\end{equation*}
This tells us that $\pend(\stepproof(p,K))$ is a $\Sigma_1$-sequent and that we have
\begin{equation*}
 3^{k(\pend(\stepproof(p,K)))}_{1+}\leq 3^{k(\pend(p))}_{1+}.
\end{equation*}
As before we can use Lemma \ref{lem:bad-pair-bound-inequality} to get
\begin{equation*}
\stepbound(p,K)=F_{\pord(\stepproof(p,K))}(3^{k(\pend(\stepproof(p,K)))}_{1+})<K.
\end{equation*}
Finally, to establish $\false(\stepbound(p,K);\stepbound(p,K))$ it suffices to show $\false(\pend(p);\stepbound(p,K))$ and $\false(n\in N;\stepbound(p,K))$. The first conjunct reduces to the assumption $\false(\pend(p);K)$ by Lemma \ref{lem:truth-bound-monotonous}. The second conjunct is the assumption of the case distinction\\
It remains to treat the case $\prule(p)=\cut_A$ with $A\equiv\exists_x A_0(x)\in\Sigma_1$. Again, we have to distinguish two cases: Assume first that there is an $m\leq K$ such that we have
\begin{equation}\label{eq:sigma-cut-case-distinction}
 \true(m\in N;F_{\pord(\pred(p,1))}(3^{k(\pend(\pred(p,1)))}_{1+};K))\quad\text{and}\quad\true(A_0(m);0).
\end{equation}
Let $m$ be minimal with this property. By definition we then have $\stepproof(p,K)=\mathcal I_{m,\neg A}\pred(p,0)$. As a preparation, observe that the first conjunct of (\ref{eq:sigma-cut-case-distinction}) gives
\begin{equation*}
 3^{m+1}<F_{\pord(\pred(p,1))}(3^{k(\pend(\pred(p,1)))}_{1+};K).
\end{equation*}
Using Lemma \ref{lem:pred-ordinal-independent-n} we have
\begin{equation}\label{eq:inversion-leaves-ordinal}
 \pord(\pred(p,1))=\pord(\pred(p,0))=\pord(\stepproof(p,K)).
\end{equation}
Also, the local correctness of $p$ implies
\begin{equation*}
\pend(\pred(p,1))\subseteq\pend(p)\cup\{A\}
\end{equation*}
and thus
\begin{equation*}
 3^{k(\pend(\pred(p,1)))}_{1+}\leq 3^{k(\pend(p))}_{1+}.
\end{equation*}
We then obtain the inequality
\begin{equation}\label{eq:sigma-cut-bound-m}
 3^{m+1}<F_{\pord(\stepproof(p,K))}(3^{k(\pend(p))}_{1+};K),
\end{equation}
which we will need later. Coming to the required verifications, observe that we have
\begin{multline*}
 \pend(\stepproof(p,K))=\pend(\pred(p,0))\backslash\{\neg A\}\cup\{m\notin N,\neg A_0(m)\}\subseteq\\
\subseteq\pend(p)\cup\{m\notin N,\neg A_0(m)\}.
\end{multline*}
This shows that $\pend(\stepproof(p,K))$ is a $\Sigma_1$-sequent. Also, since the proper $\Delta_0$-formula $\neg A_0(m)$ cannot be of the form $n\notin N$, it tells us 
\begin{equation}\label{eq:sigma-inversion-case-distinction-maximum}
 3^{k(\pend(\stepproof(p,K)))}_{1+}\leq\max\{3^{k(\pend(p))}_{1+},3^{m+1}\}.
\end{equation}
Using (\ref{eq:inversion-leaves-ordinal}), the local correctness of $p$ implies
\begin{equation*}
 \step(p)\vdash\pord(\stepproof(p,K))+1\leq_{3^{k(\pend(p))}_{1+}}\pord(p).
\end{equation*}
I claim that we have
\begin{equation*}
 3^{k(\pend(\stepproof(p,K)))}_{1+}\leq F_{\pord(\stepproof(p,K))}(3^{k(\pend(p))}_{1+};K).
\end{equation*}
This is established by a case distinction on the maximum in (\ref{eq:sigma-inversion-case-distinction-maximum}): In one case it suffices to invoke (\ref{eq:sigma-cut-bound-m}). In the other case one uses the second half of Lemma \ref{lem:bad-pair-bound-inequality}. Having shown this, we can now invoke the first half of Lemma \ref{lem:bad-pair-bound-inequality} to get
\begin{equation*}
 \stepbound(p,K)=F_{\pord(\stepproof(p,K))}(3_{1+}^{k(\pend(\stepproof(p,K)))})<K.
\end{equation*}
It remains to establish $\false(\pend(\stepproof(p,K));\stepbound(p,K))$. Recall that the formula $m\notin N$ does not belong to the class $\Delta_0\cup\Sigma_1$. Thus it is enough to show $\false(\pend(p);\stepbound(p,K))$ and $\false(\neg A_0(m);\stepbound(p,K))$. The first conjunct follows from Lemma \ref{lem:truth-bound-monotonous}. To get the second conjunct it suffices to see that the proper $\Delta_0$-formula $A_0(m)$ is true, and this is the case according to (\ref{eq:sigma-cut-case-distinction}).\\
Still for $\prule(p)=\cut_A$ with $A\equiv\exists_x A_0(x)\in\Sigma_1$, assume now that (\ref{eq:sigma-cut-case-distinction}) holds for no $m\leq K$. Then we have $\stepproof(p,K)=\pred(p,1)$, and the local correctness of $p$ gives
\begin{equation*}
 \pend(\stepproof(p,K))\subseteq\pend(p)\cup\{A\}.
\end{equation*}
Thus $\pend(\stepproof(p,K))$ is indeed a $\Sigma_1$-sequent, and we have
\begin{equation*}
 3_{1+}^{k(\pend(\stepproof(p,K)))}\leq 3_{1+}^{k(\pend(p))}.
\end{equation*}
The local correctness of $p$ also yields
\begin{equation*}
 \step(p)\vdash\pord(\stepproof(p,K))+1\leq_{3_{1+}^{k(\pend(p))}}\pord(p).
\end{equation*}
By Lemma \ref{lem:bad-pair-bound-inequality} we get
\begin{equation*}
 \stepbound(p,K)=F_{\pord(\stepproof(p,K))}(3_{1+}^{k(\pend(\stepproof(p,K)))})<K.
\end{equation*}
Finally, the condition $\false(\pend(\stepproof(p,K));\stepbound(p,K))$ is easily reduced to $\false(\exists_x A_0(x);\stepbound(p,K))$. The latter is nothing but the assumption of the case distinction.
\end{proof}

For applications, we will use the following consequence of this work:

\begin{corollary}[$\isigma_1$]\label{cor:no-bad-pairs}
 There is no bad pair.
\end{corollary}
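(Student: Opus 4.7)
The plan is to derive a contradiction from the assumption that some bad pair $(p_0,K_0)$ exists, by iterating the step map until the bound is forced below zero. First I would prove, by induction on $n\in\mathbb N$, that the pair $(\seqproof(p_0,K_0,n),\seqbound(p_0,K_0,n))$ is bad. The base case $n=0$ is the hypothesis, while the induction step is an immediate application of Proposition~\ref{lem:step-preserves-bad-pairs} to the pair at stage $n$, which is bad by induction hypothesis; this works because $\seqproof$ and $\seqbound$ are defined to apply precisely $\stepproof$ and $\stepbound$ to the previous stage.

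For this induction to be available in $\isigma_1$, it suffices to check that the bad-pair predicate is $\Delta_0$. The membership test $\pi\in\zet^\infty$, the inequality $\dcut(\pi)\leq 1$, and the syntactic property of $\pend(\pi)$ being a $\Sigma_1$-sequent are all primitive recursive. The equation $K=F_{\pord(\pi)}(3^{k(\pend(\pi))}_{1+})$ is $\Delta_0$ in view of Sommer's $\Delta_0$-formula for $F_\alpha(n)=m$. Finally, Definition~\ref{def:truth-finite-universe} expresses $\false(\pend(\pi);K)$ as a Boolean combination of bounded-quantifier statements, since the witness in the $\Sigma_1$-clause is explicitly bounded by $K$.

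With the first induction in place, the second clause of Proposition~\ref{lem:step-preserves-bad-pairs} gives $\seqbound(p_0,K_0,n+1)<\seqbound(p_0,K_0,n)$ for every $n$, whence a second elementary induction yields $\seqbound(p_0,K_0,n)+n\leq K_0$. Instantiating at $n=K_0+1$ would produce a natural number strictly less than zero, the desired contradiction. The only non-routine step is the $\Delta_0$-complexity check for the bad-pair predicate; once that is secured, the rest is a mechanical application of Proposition~\ref{lem:step-preserves-bad-pairs} wrapped in two instances of $\Delta_0$-induction.
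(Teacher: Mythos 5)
Your proposal is correct and follows the same route as the paper: a first induction shows each pair $(\seqproof(p,K,n),\seqbound(p,K,n))$ is bad via Proposition~\ref{lem:step-preserves-bad-pairs}, and a second induction drives the bound below zero after $K+1$ steps. One very minor imprecision: the bad-pair predicate is not literally $\Delta_0$ but primitive recursive (hence provably $\Delta_1$ in $\isigma_1$), since e.g.\ $p\in\zet^\infty$ is defined by structural recursion on proof terms; this does not affect the argument, because $\isigma_1$ proves induction for $\Pi_1$-formulas, which suffices here.
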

\begin{proof}
 Aiming at a contradiction, assume that $(p,K)$ is a bad pair. By induction on $n$ we can show that the following holds for all $n\in \mathbb N$:
\begin{itemize}
 \item $(\seqproof(p,K,n),\seqbound(p,K,n))$ is a bad pair
 \item if $n>0$ then $\seqbound(p,K,n)<\seqbound(p,K,n-1)$
\end{itemize}
The induction step is provided by Lemma \ref{lem:step-preserves-bad-pairs}. Having established this we can inductively prove
\begin{equation*}
 \seqbound(p,K,n)<(K+1)\dotminus n.
\end{equation*}
In particular this implies $\seqbound(p,K,K+1)<0$, which is absurd.
\end{proof}

We can deduce the main result of this paper, stating that $F_{\omega_n}\!\downarrow$ implies the uniform $\Sigma_1$-reflection principle over the theory $\isigma_n$:

\begin{theorem}[$\isigma_1$]\label{thm:totality-implies-reflection}
Assume that the function $F_{\omega_n}$ is total, with $n\geq 2$. Then any closed $\Pi_2$-formula that is provable in $\isigma_n$ is true.
\end{theorem}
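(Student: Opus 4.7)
Suppose $\isigma_n \vdash \forall x\,\varphi(x)$ with $\varphi \in \Sigma_1$; we must show that $\varphi(\overline m)$ is true in the standard model for every numeral $\overline m$. First, passing to the extended language so that the $\Delta_0'$-matrix of $\varphi$ is boxed (Section 1), and combining the given proof with a single $(\cut)$ against the $\Sigma_2$-formula $\exists x\,\neg\varphi(x)$ together with a small derivation of $\{\neg\varphi(\overline m), \varphi(\overline m)\}$, we obtain a proof $d \in \zet_n^0$ whose end-sequent is $\{\varphi(\overline m)\}$; the cut is admissible since $n \geq 2$, and free-cut elimination then allows us to assume $\dcut(d) \leq n$. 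Set $M := \dterm(d)$.

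Embed $d$ into the infinite system as $[d]_M \in \zet^\infty$: we have $\pord([d]_M) = \omega \cdot (2\height(d) + 1) < \omega^2$, $\dcut([d]_M) \leq n$, and $\dacc([d]_M) = 0$. Define the cut-eliminated proof
\begin{equation*}
p^{*} := \underbrace{\mathcal E \cdots \mathcal E}_{n - 2}\, \mathcal E_0\, [d]_M,
\end{equation*}
interpreting the empty composition as the identity when $n = 2$. Proposition \ref{prop:pred-proper-proof-proper} yields $p^{*} \in \zet^\infty$, and since $\mathcal E_0$ and each $\mathcal E$ reduce the cut rank by one, $\dcut(p^{*}) \leq 1$. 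The end-sequent is unchanged, so $\pend(p^{*}) = \{\varphi(\overline m)\} \cup \{j \notin N \mid j \leq M\}$ is still a $\Sigma_1$-sequent. For the ordinal, $\mathcal E_0$ sends $\omega \cdot (2\height(d) + 1)$ to $3^{\omega \cdot (2\height(d) + 1)} = \omega^{2\height(d) + 1} < \omega_2$, and each subsequent $\mathcal E$ sends $\alpha$ to $\omega^\alpha$, so a short $\isigma_1$-induction on $n$ gives $\pord(p^{*}) < \omega_n$.

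For contradiction, suppose $\varphi(\overline m)$ is false in $\mathbb N$; then $\false(\varphi(\overline m); K)$ holds for every $K$, since $\varphi \equiv \exists y\,\psi$ has no witness at all. Using Lemma \ref{lem:transformations-step-downs} we assemble a step-down argument witnessing $\omega_n \searrow_k \pord(p^{*})$ for any chosen $k$ (via Proposition \ref{prop:step-down-from-argument}), and Lemma \ref{lem:connection-fast-step}(a) together with the totality of $F_{\omega_n}$ then guarantees that $K := F_{\pord(p^{*})}(3_{1+}^{k(\pend(p^{*}))})$ is defined. The pair $(p^{*}, K)$ now satisfies every clause of the bad-pair definition, contradicting Corollary \ref{cor:no-bad-pairs}. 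The principal obstacle is formalising this argument inside $\isigma_1$ uniformly in $n$: both the construction $d \mapsto p^{*}$ (by primitive recursion on $n$) and the ordinal bound $\pord(p^{*}) < \omega_n$ together with its accompanying step-down argument must be produced syntactically through the calculus of Lemma \ref{lem:transformations-step-downs}, without any meta-theoretic quantification over ordinals below $\varepsilon_0$.
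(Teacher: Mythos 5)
Your proposal follows the same route as the paper: reduce to a false closed $\Sigma_1$-formula, embed the finite proof as $[d]_{\dterm(d)}$, apply $\mathcal E_0$ followed by $n-2$ copies of $\mathcal E$ to drop the cut rank to $\leq 1$, and derive a contradiction with Corollary \ref{cor:no-bad-pairs} by exhibiting a bad pair. Your reduction from $\Pi_2$ to $\Sigma_1$ via a cut against the $\Sigma_2$-formula $\exists_x\neg\varphi(x)$ is exactly what the paper means by ``specializing the universal quantifier'', and your ordinal and cut-rank bookkeeping for $p^*$ matches the paper's.

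There is, however, a genuine gap in the way you justify that $K = F_{\pord(p^*)}(3_{1+}^{k(\pend(p^*))})$ is defined. You claim to ``assemble a step-down argument witnessing $\omega_n \searrow_k \pord(p^{*})$ for any chosen $k$''. This is false for small $k$: we have $\pord(p^*) = \omega_{n-1}^{2\height(d)+1}$ and $\{\omega_n\}(k) = \omega_{n-1}^{k+1}$, so as soon as $k < 2\height(d)$ the very first step of the fundamental-sequence descent already falls strictly below $\pord(p^*)$, and since the descent is monotone decreasing it can never return to $\pord(p^*)$. In particular the base you actually need, $3_{1+}^{k(\pend(p^*))} = 3^{\max\{1,\dterm(d)\}+1}$, is governed by the \emph{term depth} of $d$ and can be far smaller than $2\height(d)$ when the proof is tall but has shallow terms. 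The argument can be repaired: step down from $\omega_n$ in a base that is at least $2\height(d)$ (for which a step-down argument can indeed be built), conclude $F_{\pord(p^*)}$ is defined at that large base by Lemma \ref{lem:connection-fast-step}(a), and then pass to the smaller argument $3_{1+}^{k(\pend(p^*))}$ via Lemma \ref{lem:connection-fast-step}(b). The paper sidesteps all of this by citing the result of Sommer that, $\isigma_1$-provably, totality of $F_{\omega_n}$ yields totality of $F_\alpha$ for every $\alpha < \omega_n$; that is the cleaner way to discharge the obligation and is the one you should use.
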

\begin{proof}
As usual, it suffices to consider $\Sigma_1$-formulas: For a false provable $\Pi_2$-formula has a false instance, and this instance is still provable by specializing the universal quantifier. Under the assumption that $A$ is a false closed $\Sigma_1$-formula with $\isigma_n\vdash A$ we will construct a bad pair: Let $d^\Gamma\in\zet_n^0$ be the given finite proof with $\Gamma\subseteq\{A\}$. Recall that $\height(d)$ and $\dterm(d)$ denote the height and the term depth of $d$, respectively. By \cite[Proposition 5.4]{sommer95} the totality of $F_{\omega_n}$ implies that $F_\alpha$ is total for any ordinal smaller than $\omega_n$. We can thus define $p\in\zet^\infty_0$ and $K\in\mathbb N$ as
\begin{gather*}
 p :=\underbrace{\mathcal E\cdots\mathcal E}_{n-2\text{ times}}\mathcal E_0[d]_{\dterm(d)},\\
 K :=F_{\omega^{2\cdot\height(d)+1}_{n-1}}(3^{\max\{1,\dterm(d)\}+1}).
\end{gather*}
Let us prove $p\in\zet^\infty$: First, $[d]_{\dterm(d)}\in\zet^\infty$ is immediate. Also, we have
\begin{equation}\label{eq:ordinal-embedded-finite-proof}
 \pord([d]_{\dterm(d)})=\omega\cdot(2\cdot\height(d)+1)<\omega^2
\end{equation}
and $\dacc([d]_{\dterm(d)})=0$. It follows that $\mathcal E_0[d]_{\dterm(d)}$ is in $\zet^\infty$. Then $p\in\zet^\infty$ holds without any further conditions.\\
Next, observe that $d\in\zet_n^0$ implies $\dcut([d]_{\dterm(d)})=\dcut(d)\leq n$. Then we have $\dcut(\mathcal E_0[d]_{\dterm(d)})\leq n-1$ and finally $\dcut(p)\leq n-(n-1)=1$.\\
Concerning the end-sequent, we have
\begin{equation*}
 \pend(p)=\pend([d^\Gamma]_{\dterm(d)})=\Gamma\cup\{n\notin N\,|\, n\leq\dterm(d)\}\subseteq\{A\}\cup\{n\notin N\,|\, n\leq\dterm(d)\}.
\end{equation*}
Thus $\pend(p)$ is a $\Sigma_1$-sequent and we have $k(\pend(p))=\max\{1,\dterm(d)\}$.\\
Building on (\ref{eq:ordinal-embedded-finite-proof}) we have
\begin{equation*}
 \pord(\mathcal E_0[d]_{\dterm(d)})=3^{\omega\cdot(2\cdot\height(d)+1)}=\omega^{2\cdot\height(d)+1}
\end{equation*}
and then
\begin{equation*}
 \pord(p)=\omega_{n-1}^{2\cdot\height(d)+1}.
\end{equation*}
This implies $K=F_{\pord(p)}(3_{1+}^{k(\pend(p))})$, as required of a bad pair.\\
Finally, we need to show $\false(\pend(p);K)$. Using Lemma \ref{lem:truth-bound-monotonous} this can be reduced to $\false(A;K)$. The latter holds since we have assumed that $A$ is false outright.
\end{proof}

As a further application we deduce a variant of \cite[Lemma 3.5]{rathjen13}. Our proof is essentially that of \cite[Appendix A]{rathjen13}. Note that \cite[Lemma 3.5]{rathjen13} refers to the G\"odel numbers of proofs. Here, we instead use the height, the term-depth and the cut rank of a proof to measure its size.

\begin{theorem}[$\isigma_1$]
 Assume that $\feps(n)$ is defined, with $n\geq 1$. Then $F_{\omega_n}(n)$ is defined, and no $d\in\bigcup_n\zet_n^0$ can satisfy all of the following:
\begin{itemize}
 \item $d$ proves the empty sequent, i.e.\ a contradiction,
 \item $\dcut(d)\leq n$,
 \item $2\cdot\height(d)\leq F_{\omega_n}(n)$,
 \item $3^{\max\{1,\dterm(d)\}+1}\leq F_{\omega_n}(n)$.
\end{itemize}
\end{theorem}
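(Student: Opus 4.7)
The plan splits into two parts. For the first assertion, that $F_{\omega_n}(n)\!\downarrow$, I observe that the standard fundamental sequence satisfies $\{\varepsilon_0\}(n)=\omega_n$, so $\fund(n)^{\varepsilon_0}_{\omega_n}$ is a step-down argument of base $n$. Proposition~\ref{prop:step-down-from-argument} then yields $\varepsilon_0\searrow_n\omega_n$, and Lemma~\ref{lem:connection-fast-step}(a) converts the hypothesis $F_{\varepsilon_0}(n)\!\downarrow$ into $F_{\omega_n}(n)\!\downarrow$.

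For the main assertion I argue by contradiction: suppose some $d$ satisfies all four conditions; I will manufacture a bad pair and invoke Corollary~\ref{cor:no-bad-pairs}. Mirroring the proof of Theorem~\ref{thm:totality-implies-reflection}, set (for the generic case $n\geq 2$)
\begin{equation*}
p:=\underbrace{\mathcal E\cdots\mathcal E}_{n-2\text{ times}}\mathcal E_0[d]_{\dterm(d)},
\end{equation*}
and handle $n=1$ by an analogous but simpler construction (where either $p:=[d]_{\dterm(d)}$ already works, or the extreme smallness of $F_{\omega_1}(1)$ makes the hypotheses on $d$ outright inconsistent). The verifications $p\in\zet^\infty$, $\dcut(p)\leq 1$ (using $\dcut(d)\leq n$ and that each cut-elimination operator lowers the cut rank by one) and $\pord(p)=\omega_{n-1}^{2\cdot\height(d)+1}$ are then identical to the corresponding computations in the proof of Theorem~\ref{thm:totality-implies-reflection}.

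The difference with the previous theorem lies entirely in the end-sequent. Because $d$ proves the empty sequent and neither $\mathcal E$ nor $\mathcal E_0$ alters the end-sequent,
\begin{equation*}
\pend(p)=\{m\notin N\mid m\leq\dterm(d)\},
\end{equation*}
which is a $\Sigma_1$-sequent containing no formula from $\Delta_0\cup\Sigma_1$. Consequently $k(\pend(p))=\max\{1,\dterm(d)\}$ and the falsity clause $\false(\pend(p);K)$ is vacuous for every $K$. Setting $K:=F_{\pord(p)}(3^{k(\pend(p))}_{1+})$, the pair $(p,K)$ fulfills every clause of the definition of a bad pair --- provided only that $K$ is actually defined --- and Corollary~\ref{cor:no-bad-pairs} then delivers the desired contradiction.

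The main technical obstacle is precisely this definedness of $K$. I would verify it by first noting that the size assumption $2\cdot\height(d)\leq F_{\omega_n}(n)$ keeps the topmost exponent of the tower $\omega_{n-1}^{2\cdot\height(d)+1}$ finite, forcing $\pord(p)<\omega_n$; then a step-down argument from $\omega_n$ to $\pord(p)$ with base bounded by $F_{\omega_n}(n)$ can be assembled using the constructions of Lemma~\ref{lem:transformations-step-downs}. Combined with the second size hypothesis $3^{\max\{1,\dterm(d)\}+1}\leq F_{\omega_n}(n)$, Lemma~\ref{lem:connection-fast-step} then transports the established $F_{\omega_n}(n)\!\downarrow$ down to $F_{\pord(p)}(3^{k(\pend(p))}_{1+})\!\downarrow$, closing the construction of the bad pair.
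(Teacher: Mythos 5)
Your overall strategy matches the paper's: construct a bad pair via $p := \mathcal E^{n-2}\mathcal E_0[d]_{\dterm(d)}$ and invoke Corollary~\ref{cor:no-bad-pairs}, with the empty end-sequent making $\false(\pend(p);K)$ vacuous. That part is fine. However there are two genuine problems in the places you gloss over.

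First, the convention. You assert $\{\varepsilon_0\}(n)=\omega_n$ and propose the step-down argument $\fund(n)^{\varepsilon_0}_{\omega_n}$. In the convention adopted here (Buchholz--Wainer / Sommer), $\feps(n)$ is by definition $F_{\omega_{n+1}}(n)$, i.e.\ $\{\varepsilon_0\}(n)=\omega_{n+1}$, not $\omega_n$. More importantly, all ordinals entering the step-down machinery, the relation $\alpha\searrow_n\beta$, and the $\Delta_0$-defined $F_\alpha$ are required to be strictly below $\varepsilon_0$, so $\varepsilon_0$ cannot appear as the top of a step-down argument; $\feps$ is only attached to the hierarchy through the identity $\feps(n)=F_{\omega_{n+1}}(n)$. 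The paper therefore builds (by recursion on $n$) arguments $s_n\vdash\omega_n+1\leq_1\omega_{n+1}$ entirely below $\varepsilon_0$, applies Proposition~\ref{prop:step-down-from-argument} and Lemma~\ref{lem:connection-fast-step} to get $F_{\omega_n+1}(n)\!\downarrow$, and only then concludes $F_{\omega_n}(n)\!\downarrow$.

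Second, and more seriously, the definedness of $K$ does not follow from the route you sketch. You propose to step down from $\omega_n$ to $\pord(p)$ with base $\leq F_{\omega_n}(n)$ and to "transport'' $F_{\omega_n}(n)\!\downarrow$ to $F_{\pord(p)}(3^{k(\pend(p))}_{1+})\!\downarrow$. But Lemma~\ref{lem:connection-fast-step}(a) keeps the argument of $F$ fixed (equal to the step-down base), and part (b) only lets you \emph{decrease} it. To use a step-down with base $K_0:=F_{\omega_n}(n)$ you need $F_{\omega_n}(K_0)\!\downarrow$, i.e.\ $F_{\omega_n}(F_{\omega_n}(n))\!\downarrow$, which is not a consequence of $F_{\omega_n}(n)\!\downarrow$ alone. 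The paper closes this gap using Sommer's iterate equality $F_{\omega_n+1}(n)=F_{\omega_n}^{n+1}(n)$: since $F_{\omega_n+1}(n)\!\downarrow$ (from the step-down $\omega_{n+1}\searrow_n\omega_n+1$) and $F_{\omega_n}(K_0)=F_{\omega_n}^2(n)\leq F_{\omega_n}^{n+1}(n)$, one gets $F_{\omega_n}(K_0)\!\downarrow$, then $\{\omega_n\}(K_0)=\omega_{n-1}^{K_0+1}$ and the hypothesis $2\cdot\height(d)\leq K_0$ give $F_{\omega_{n-1}^{2\cdot\height(d)+1}}(K_0)\!\downarrow$, and finally $3^{\max\{1,\dterm(d)\}+1}\leq K_0$ lets Lemma~\ref{lem:connection-fast-step}(b) bring the argument down to what is needed. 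Without the iterate step your construction of the bad pair does not go through.
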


Since the bounds on height and term-depth are so large, the result could easily be transferred to a different formal system (possibly containing function symbols for addition and multiplication) by a primitive recursive translation of proofs.

\begin{proof}
 Aiming at a contradiction, assume that $d$ is one of the proofs that the theorem aims to exclude. Based on this assumption, we construct a bad pair.\\
The main task is to show that $F_{\omega^{2\cdot\height(d)+1}_{n-1}}(3^{\max\{1,\dterm(d)\}+1})$ is defined. For this purpose we construct step-down arguments $s_n\vdash\omega_n+1\leq_1 \omega_{n+1}$ for all $n\in\mathbb N$ (cf.\ \cite[Lemma 2.13]{rathjen13}). In case $n=0$ we can take $s_0:=\fund(1)^\omega_2$. In the recursion step, $s_{n+1}$ arises as the following composition: First, one uses $\omega^{s_n}$ to descend from $\omega_{n+2}=\omega^{\omega{n+1}}$ to $\omega^{\omega_n+1}$. Using the step-down argument $\fund(1)$ we reach $\omega^{\omega_n}\cdot 2=\omega_{n+1}\cdot 2$. The step-down argument $\omega_{n+1}+s^{\omega_{n+1}}_1$ (see Lemma \ref{lem:transformations-step-downs}) then takes us to $\omega_{n+1}+1$. Now recall that, by definition, $\feps(n)$ is equal to $F_{\omega_{n+1}}(n)$. By Proposition \ref{prop:step-down-from-argument} we have $\omega_{n+1}\searrow_n\omega_n+1$. An application of Lemma \ref{lem:connection-fast-step} tells us that $F_{\omega_n+1}(n)$ is defined, and so is $F_{\omega_n}(n):=K_0$. From \cite[Theorem 5.3, Proposition 5.4]{sommer95} we know
\begin{equation*}
 F_{\omega_n}(K_0)\leq F_{\omega_n}^{n+1}(n)=F_{\omega_n+1}(n),
\end{equation*}
and all involved expressions are defined. In view of $\{\omega_n\}(K_0)=\omega_{n-1}^{K_0+1}$ we know that $F_{\omega_{n-1}^{K_0+1}}(K_0)=F_{\omega_n}(K_0)$ is defined. Since we have $2\cdot\height(d)\leq K_0$ it is not hard to give a step-down argument for $\omega^{2\cdot\height(d)+1}_{n-1}\leq_0 \omega_{n-1}^{K_0+1}$. Parallel to the above we can conclude that $F_{\omega^{2\cdot\height(d)+1}_{n-1}}(K_0)$ is defined. Finally, Lemma \ref{lem:connection-fast-step} tells us that $F_{\omega^{2\cdot\height(d)+1}_{n-1}}(3^{\max\{1,\dterm(d)\}+1})$ is defined as well. Now set
\begin{gather*}
 p :=\underbrace{\mathcal E\cdots\mathcal E}_{n-2\text{ times}}\mathcal E_0[d]_{\dterm(d)},\\
 K :=F_{\omega^{2\cdot\height(d)+1}_{n-1}}(3^{\max\{1,\dterm(d)\}+1}).
\end{gather*}
Much as in the proof of Theorem \ref{thm:totality-implies-reflection} one verifies that $(p,K)$ is a bad pair. This contradicts Corollary \ref{cor:no-bad-pairs}.
\end{proof}

Concerning a third application, recall that the computation sequence for $F_{\omega_n}(m)$ must terminate if $F_{\omega_n}(m)$ is to be defined. Thus the totality of $F_{\omega_n}$ implies that certain sequences of ordinals below $\omega_n$ cannot be infinitely descending. In a somewhat indirect manner we can now deduce that the same holds for any primitive recursive sequence of ordinals below $\omega_n$:

\begin{theorem}
 Let $f_p(x)=y$ be a $\Sigma_1$-formula in the variables $x,y,p$ such that we have
\begin{multline*}
 \isigma_1\vdash\text{``for all parameters $p$,}\\
\text{the formula $f_p(x)=y$ defines a function from $\mathbb N$ to the ordinals''}.
\end{multline*}
Then $\isigma_1$ proves the following: If $F_{\omega_n}$ with $n\geq 2$ is total then we have
\begin{equation*}
 f_p(0)<\omega_n\rightarrow\exists_m f_p(m+1)\geq f_p(m).
\end{equation*}
\end{theorem}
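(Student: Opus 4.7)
The plan is to combine Theorem~\ref{thm:totality-implies-reflection} with the classical fact that, for each specific $\alpha<\omega_n$, the theory $\isigma_n$ proves transfinite induction up to $\alpha$ for $\Pi_2$-formulas via a derivation that can be constructed primitive recursively in $\alpha$. Working in $\isigma_1$, I assume $F_{\omega_n}\!\downarrow$, fix a parameter $p$, and set $\alpha:=f_p(0)$, assuming $\alpha<\omega_n$ (otherwise the implication is vacuous).

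Consider the $\Pi_2$-formula
\begin{equation*}
 \varphi_{\alpha}:=\forall q\,\forall m\,\bigl[f_q(m)\leq\alpha\to\exists k\geq m\,f_q(k+1)\geq f_q(k)\bigr],
\end{equation*}
in which $\alpha$ occurs as a numeral and $q$ is bound; so $\varphi_\alpha$ is closed. I will exhibit a primitive recursive function $\alpha\mapsto d_\alpha$ such that, whenever $\alpha<\omega_n$, the term $d_\alpha\in\zet_n$ is an $\isigma_n$-proof of $\varphi_\alpha$. Granted this, the theorem follows: applying Theorem~\ref{thm:totality-implies-reflection} to $d_\alpha$ yields that $\varphi_\alpha$ is true, and specialising to $q:=p$ and $m:=0$, together with $f_p(0)=\alpha\leq\alpha$, produces the required $k$ with $f_p(k+1)\geq f_p(k)$.

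To build $d_\alpha$, let $A(\beta,q):=\forall m\,[f_q(m)=\beta\to\exists k\geq m\,f_q(k+1)\geq f_q(k)]$. The predicate $A(\cdot,q)$ is $\Pi_2$ and easily seen to be progressive in $\isigma_n$: given $f_q(m)=\beta$, either $f_q(m+1)\geq f_q(m)$ (done) or else $f_q(m+1)<\beta$, so one may apply $A(f_q(m+1),q)$ at index $m+1$. Combining this progressivity with $\ti(\alpha,\Pi_2)$ yields $A(\gamma,q)$ for all $\gamma\leq\alpha$, and hence $\varphi_\alpha$. For each specific $\alpha<\omega_n$, the classical Gentzen-style construction produces a proof of $\ti(\alpha,\Pi_2)$ in $\zet_n$ primitive recursively in $\alpha$, by iterating the step ``$\ti(\xi,\Pi_2)\Rightarrow\ti(\omega^\xi,\Pi_2)$'' along the Cantor normal form of $\alpha$ and bottoming out at $\ti(\omega,\Pi_2)$, an instance of $\Sigma_n$-induction, available because $n\geq 2$.

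The main obstacle is realising this Gentzen argument uniformly as a primitive recursive assignment $\alpha\mapsto d_\alpha$ of concrete elements of $\zet_n$, with height, term-depth and cut-rank staying within the range to which Theorem~\ref{thm:totality-implies-reflection} applies. Although classical, this is the bookkeeping part of the argument and is the only step requiring genuine care; the growth of all proof parameters along the iteration can be shown to remain uniformly primitive recursive in $\alpha$.
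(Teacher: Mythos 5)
Your strategy is essentially the same as the paper's: apply Theorem~\ref{thm:totality-implies-reflection} to a $\Pi_2$-formula expressing ``no infinite descent below $\alpha$,'' show this formula is provable in $\isigma_n$ by transfinite induction, and verify progressivity. The one place you correctly flag as the sole real obstacle --- exhibiting a primitive recursive $\alpha\mapsto d_\alpha$ so that the internal quantifier $\forall_p\,\pr_{\isigma_n}(\dots)$ can be proved in $\isigma_1$ --- is precisely the point the paper handles by citation rather than construction: it invokes Sommer's Lemmas 4.3 and 4.4, which give the uniformity of $\ti(\Pi_2)$ proofs in $\isigma_n$. There is a small difference in how the two arguments parameterise the family: you index the $\Pi_2$-formula by the ordinal $\alpha=f_p(0)$ itself, while the paper first observes $\{\omega_n\}(l)=\omega_{n-1}^{l+1}$ and so only needs the $\Pi_2$-formula relativised to $\omega_{n-1}^{l+1}$ for each natural number $l$. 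The latter keeps the outer parameter a plain numeral and lets Sommer's uniform-in-$l$ result be quoted directly, which is a cleaner route to the same uniformity; your version would require tracking the height of the hereditary Cantor normal form of $\alpha$ through the Gentzen lifting, which is workable but messier. Modulo that citation, your proposal is sound.
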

\begin{proof}
 In view of $\{\omega_n\}(l)=\omega_{n-1}^{l+1}$ it suffices to prove that the totality of $F_{\omega_n}$ implies
\begin{equation*}
 \forall_{k,l,p}(f_p(k)<\omega_{n-1}^{l+1}\rightarrow\exists_{m\geq k} f_p(m+1)\geq f_p(m)).
\end{equation*}
This follows from Theorem \ref{thm:totality-implies-reflection} if we can show
\begin{equation*}
 \forall_{n\geq 2}\forall_{l,p}\pr_{\isigma_n}(\forall_k(f_p(k)<\omega_{n-1}^{l+1}\rightarrow\exists_{m\geq k} f_p(m+1)\geq f_p(m))).
\end{equation*}
Indeed, by \cite[Lemma 4.3, Lemma 4.4]{sommer95} the theory $\isigma_n$ allows ordinal induction for $\Pi_2$-formulas up to $\omega_{n-1}^{l+1}$ (for any externally given $l$). It is easy to check that the formula
\begin{equation*}
 \forall_k(f_p(k)<\alpha\rightarrow\exists_{m\geq k} f_p(m+1)\geq f_p(m))
\end{equation*}
is progressive in $\alpha$.
\end{proof}

\section{Interpreting Step-Down Arguments in \texorpdfstring{$\isigma_1$}{ISigma-1}}

The goal of this section is to give a proof of Proposition \ref{prop:step-down-from-argument} in the theory $\isigma_1$. Many of our arguments come from \cite[Section 2]{rathjen13}. The frame theory there is Peano Arithmetic, but it is suggested that $\isigma_1$ is sufficient (see the passage after \cite[Lemma 2.4]{rathjen13}). To verify that this is the case we repeat the arguments in much detail. All ordinals that appear will be strictly below $\varepsilon_0$.\\
Recall the step-down function $\fund(\alpha,n,x)$ from Definition \ref{def:step-down-function}. We begin with an easy observation:

\begin{lemma}\label{lem:fund-sequences-monotone}
 For any ordinal $\alpha$ and any natural number $n$ we have
\begin{equation*}
 \fund(\alpha,n,x)>\fund(\alpha,n,y)\rightarrow x<y.
\end{equation*}
\end{lemma}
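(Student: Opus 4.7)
The plan is to establish the contrapositive, namely that the sequence $x \mapsto \fund(\alpha,n,x)$ is non-increasing, and then read off the desired implication.

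First I would record the basic fact that $\{\beta\}(n) \leq \beta$ holds for every ordinal $\beta < \varepsilon_0$ and every $n$. This is immediate from the case distinction in the definition of fundamental sequences: we have $\{0\}(n) = 0$, $\{\beta+1\}(n) = \beta < \beta+1$, and for a limit $\lambda$ the sequence $(\{\lambda\}(k))_{k\in\mathbb N}$ approximates $\lambda$ strictly from below. The formulation from \cite{sommer95} provides a $\Delta_0$-definition for which this inequality is provable in $\isigma_1$ (in fact already in $\idelta_0(\exp)$).

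Next I would prove by $\Sigma_1$-induction on $z$ that
\begin{equation*}
\fund(\alpha,n,x+z) \leq \fund(\alpha,n,x)
\end{equation*}
holds for all $\alpha$, $n$, $x$. The base case $z=0$ is trivial, and the induction step follows from $\fund(\alpha,n,(x+z)+1) = \{\fund(\alpha,n,x+z)\}(n) \leq \fund(\alpha,n,x+z)$ combined with the induction hypothesis and the transitivity of $\leq$ on ordinals. Setting $y = x+z$ this gives the monotonicity statement: if $x \leq y$ then $\fund(\alpha,n,y) \leq \fund(\alpha,n,x)$.

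Finally, I would take the contrapositive. Assume $\fund(\alpha,n,x) > \fund(\alpha,n,y)$. If we had $y \leq x$, the monotonicity statement (applied with the roles of $x$ and $y$ swapped) would yield $\fund(\alpha,n,x) \leq \fund(\alpha,n,y)$, contradicting the assumption. Hence $x < y$, as required. There is no serious obstacle here; the only point to check is that the induction formula used in the monotonicity step is sufficiently simple to fall under $\isigma_1$-induction, which it clearly is since the ordinal ordering and the function $\fund$ are primitive recursive.
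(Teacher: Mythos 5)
The proposal is correct and takes essentially the same approach as the paper: both reduce the claim to showing that $z\mapsto\fund(\alpha,n,y+z)$ is non-increasing, prove this by induction on $z$ using the fact that fundamental sequences approximate ordinals from below, and read off the lemma as the contrapositive.
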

\begin{proof}
 It suffices to show the contrapositive of the statement, namely that
\begin{equation*}
 \fund(\alpha,n,y+z)\leq\fund(\alpha,n,y)
\end{equation*}
holds for all $z$. This is established by an easy induction over $z$. The induction step is due to the fact that fundamental sequences approximate ordinals from below.
\end{proof}

Next, we repeat \cite[Lemma 2.5]{rathjen13}:

\begin{lemma}\label{lem:trans-step-downs}
 If we have $\alpha\searrow_n^x\beta$ and $\beta\searrow_n^y\gamma$ then we have $\alpha\searrow_n^{x+y}\gamma$. If we have $\alpha\searrow_n^x \gamma$ and $\alpha\searrow_n\beta$ with $\beta\geq\gamma$ then we have $\beta\searrow_n^x \gamma$.
\end{lemma}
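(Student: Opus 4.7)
The plan is to reduce both parts to an associativity-style identity for $\fund$ which expresses that iterating the step-down function is compatible with addition of the iteration counts. Concretely, I will first prove by $\Sigma_1$-induction on $b$ that
\begin{equation*}
 \fund(\alpha,n,a+b) = \fund(\fund(\alpha,n,a),n,b)
\end{equation*}
holds for all $\alpha,n,a$. The base case $b=0$ is immediate from $\fund(\alpha,n,0)=\alpha$, and the successor case unfolds the defining clause $\fund(\cdot,n,c+1):=\{\fund(\cdot,n,c)\}(n)$ on both sides, so the induction is routine.

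For the first claim, I would unpack $\alpha\searrow_n^x\beta$ and $\beta\searrow_n^y\gamma$ to obtain $x'\leq x$ and $y'\leq y$ with $\fund(\alpha,n,x')=\beta$ and $\fund(\beta,n,y')=\gamma$. Substituting $a=x'$, $b=y'$ into the identity above yields
\begin{equation*}
 \fund(\alpha,n,x'+y')=\fund(\beta,n,y')=\gamma,
\end{equation*}
and since $x'+y'\leq x+y$ this gives $\alpha\searrow_n^{x+y}\gamma$, as required.

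For the second claim, fix witnesses $x'\leq x$ with $\fund(\alpha,n,x')=\gamma$ and $z$ with $\fund(\alpha,n,z)=\beta$. The key point is to show $z\leq x'$, after which the identity gives $\fund(\beta,n,x'-z)=\fund(\alpha,n,z+(x'-z))=\fund(\alpha,n,x')=\gamma$ with $x'-z\leq x'\leq x$, so $\beta\searrow_n^x\gamma$. To obtain $z\leq x'$, I would split cases on whether $\gamma=0$. If $\gamma>0$, the hypothesis $\beta\geq\gamma$ means $\fund(\alpha,n,z)\geq\fund(\alpha,n,x')>0$; from $z>x'$ and the easy fact that $\fund(\alpha,n,\cdot)$ strictly decreases as long as it is positive, one would derive the strict inequality $\fund(\alpha,n,z)<\fund(\alpha,n,x')$, contradicting Lemma \ref{lem:fund-sequences-monotone} (or more precisely the inequality extracted from its proof). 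If $\gamma=0$, then $\beta\geq 0$ forces no constraint, but a separate argument works: either $z\leq x'$ and we proceed as above, or else $z>x'$, in which case $\fund(\beta,n,0)=\beta$ and one verifies $\beta=0=\gamma$ so that $\beta\searrow_n^0\gamma$ suffices.

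The only subtlety is that Lemma \ref{lem:fund-sequences-monotone} as stated compares values along a \emph{single} sequence, which is exactly the situation here, so no further lemma is needed. Everything takes place in $\isigma_1$ because the induction on $b$ is on a $\Sigma_1$-formula (the graph of $\fund$ being $\Delta_0$ by Sommer), and the case analysis on $\gamma=0$ is decidable.
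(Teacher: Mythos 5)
Your overall plan matches the paper's: both rest on the associativity identity $\fund(\alpha,n,a+b)=\fund(\fund(\alpha,n,a),n,b)$, proved by induction on $b$, and the first claim is then immediate by substituting the witnesses. For the second claim, however, your case split is different and a bit less economical than the paper's. The paper distinguishes $\beta=\gamma$ (trivial, since $\beta\searrow_n^0\gamma$) from $\beta>\gamma$; in the latter case $\fund(\alpha,n,z)=\beta>\gamma=\fund(\alpha,n,x')$, and Lemma~\ref{lem:fund-sequences-monotone} applies verbatim to give $z<x'$, with no extra facts needed. You instead split on $\gamma=0$ versus $\gamma>0$, and in the $\gamma>0$ branch you invoke a \emph{strict}-descent fact (the sequence $\fund(\alpha,n,\cdot)$ strictly decreases while positive) that Lemma~\ref{lem:fund-sequences-monotone} as stated does not supply — it only yields the non-strict monotonicity used in its own proof. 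The strict version is indeed easy (it reduces to $\{\delta\}(n)<\delta$ for $\delta>0$), so this is not a gap, but it is an avoidable extra step. Also, where you say the strict inequality $\fund(\alpha,n,z)<\fund(\alpha,n,x')$ ``contradicts Lemma~\ref{lem:fund-sequences-monotone},'' that is not quite right: that inequality together with $z>x'$ is fully \emph{consistent} with the lemma. The contradiction you actually obtain is with the hypothesis $\beta\geq\gamma$, since you have derived $\beta<\gamma$. With that wording fixed, and modulo the extra strict-descent observation, your argument goes through; but the paper's split on $\beta=\gamma$ vs. $\beta>\gamma$ is cleaner because it channels the hypothesis $\beta\geq\gamma$ directly into the form in which Lemma~\ref{lem:fund-sequences-monotone} can be applied.
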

\begin{proof}
By induction on $y$ it is easy to show that we have
\begin{equation*}
 \fund(\alpha,n,x+y)=\fund(\fund(\alpha,n,x),n,y).
\end{equation*}
Coming to the first claim of the lemma, the assumptions $\alpha\searrow_n^x\beta$ and $\beta\searrow_n^y\gamma$ give us $x'\leq x$ and $y'\leq y$ with $\fund(\alpha,n,x')=\beta$ and $\gamma=\fund(\beta,n,y')$. By the above observation we get $\fund(\alpha,n,x'+y')=\gamma$, and the claim follows because of $x'+y'\leq x+y$. As for the second claim, the assumptions give us $x'\leq x$ and $x''$ with $\fund(\alpha,n,x')=\gamma$ and $\fund(\alpha,n,x'')=\beta$. The conclusion is trivial in case $\beta=\gamma$. Otherwise, the precedent lemma yields $x''<x'$ and we can deduce
\begin{multline*}
 \fund(\beta,n,x'-x'')=\fund(\fund(\alpha,n,x''),n,x'-x'')=\\
=\fund(\alpha,n,x''+(x'-x''))=\fund(\alpha,n,x')=\gamma,
\end{multline*}
just as required for $\beta\searrow_n^x \gamma$.
\end{proof}

We continue with a recapitulation of \cite[Lemma 2.7]{rathjen13}. Note that we include a converse to the result given there:

\begin{lemma}\label{lem:step-down-mesh}
 Assume that $\alpha$ meshes with $\beta$. For all ordinals $\gamma$ and natural numbers $n,x$ we have
\begin{equation*}
 \beta\searrow_n^x\gamma\qquad\Leftrightarrow\qquad\alpha+\beta\searrow_n^x\alpha+\gamma.
\end{equation*}
Furthermore, if we have $\alpha+\beta\searrow_n^x 0$ then we have $\alpha+\beta\searrow_n^x\alpha$. In that case $\beta\searrow_n^x 0$ follows by the first claim.
\end{lemma}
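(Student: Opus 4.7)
The plan is to prove both directions of the equivalence by induction on $x$, exploiting the standard identity $\{\alpha+\beta\}(n)=\alpha+\{\beta\}(n)$ satisfied by the fundamental sequences of \cite{buchholz-wainer-87} whenever $\beta>0$ meshes with $\alpha$. Meshing is hereditary along descent: if $\gamma\leq\beta$ and $\alpha$ meshes with $\beta$, then $\alpha$ meshes with $\gamma$, since for $\alpha\neq 0$ the bound $\beta<\omega^{\alpha_0+1}$ transfers to $\gamma$. This will be used silently below.

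The forward direction is a routine induction on $x$. In the step, $\beta\searrow_n^{x+1}\gamma$ with witness $y=x+1$ reduces to the inductive hypothesis applied to $\gamma':=\fund(\beta,n,x)$: one obtains some $z\leq x$ with $\fund(\alpha+\beta,n,z)=\alpha+\gamma'$, and the key identity produces $\fund(\alpha+\beta,n,z+1)=\alpha+\{\gamma'\}(n)=\alpha+\gamma$ at index $z+1\leq x+1$. The degenerate subcase $\gamma'=0$ forces $\gamma=0=\gamma'$, so $z$ itself already witnesses the conclusion. The case $y\leq x$ is immediate from the inductive hypothesis.

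The reverse direction is the main obstacle: once the descent of $\alpha+\beta$ reaches $\alpha$, the next step $\{\alpha\}(n)$ drops strictly below $\alpha$, and the ordinals visited thereafter are no longer of the form $\alpha+\beta'$. I would therefore prove, by induction on $y$, the invariant that either $\fund(\beta,n,y')>0$ for all $y'<y$ and $\fund(\alpha+\beta,n,y)=\alpha+\fund(\beta,n,y)$, or else there is a least $y_0\leq y$ with $\fund(\beta,n,y_0)=0$ and $\fund(\alpha+\beta,n,y)=\fund(\alpha,n,y-y_0)$. The induction step uses the key identity as long as $\fund(\beta,n,y)>0$, switches clause at $y=y_0$, and thereafter merely tracks $\alpha$'s own fundamental sequence. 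A hit $\fund(\alpha+\beta,n,y)=\alpha+\gamma$ cannot occur in the second clause for $y>y_0$ when $\alpha>0$, since then $\fund(\alpha,n,y-y_0)<\alpha\leq\alpha+\gamma$. So we are either in the first clause (and cancellation yields $\gamma=\fund(\beta,n,y)$) or at the transition $y=y_0$ (forcing $\gamma=0=\fund(\beta,n,y_0)$); the edge case $\alpha=0$ is trivial. In every scenario $\beta\searrow_n^x\gamma$ follows.

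For the supplementary claim, the same invariant forces any descent from $\alpha+\beta$ to $0$ to transit through $\alpha$: while we are in the first clause, $\fund(\alpha+\beta,n,y)=\alpha+\fund(\beta,n,y)\geq\alpha$, so reaching $0$ requires first passing through $\alpha=\alpha+0$ at the stage $y_0\leq x$. Hence $\alpha+\beta\searrow_n^{y_0}\alpha$, and Lemma \ref{lem:trans-step-downs} promotes this to $\alpha+\beta\searrow_n^x\alpha$. Applying the just-proved equivalence with $\gamma:=0$ then delivers $\beta\searrow_n^x 0$. Throughout, the invariant is a bounded statement for each fixed $y$, so the induction is available already in $\isigma_1$.
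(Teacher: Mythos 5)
Your proof is correct, and it uses the same key identity as the paper ($\{\alpha+\beta'\}(n)=\alpha+\{\beta'\}(n)$ for $\beta'>0$ meshing with $\alpha$), but the bookkeeping around it is organized differently. The paper isolates a single conditional ``claim'' proved by induction on $y$: if $\fund(\beta,n,y)>0$ or $\fund(\alpha+\beta,n,y)>\alpha$, then $\alpha+\fund(\beta,n,y+1)=\fund(\alpha+\beta,n,y+1)$. It then handles the $\Leftarrow$ direction by locating the first index at which the sequence $\fund(\alpha+\beta,n,\cdot)$ drops to $\alpha+\gamma$ (so that the preceding value is strictly larger than $\alpha$, making the claim applicable), and it handles the supplementary statement by contraposition, showing inductively that $\fund(\alpha+\beta,n,z)>\alpha$ for all $z\leq x$ if the sequence never hits $\alpha$. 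You instead maintain a full two-phase invariant describing the state at every step $y$ (pre-transition: $\fund(\alpha+\beta,n,y)=\alpha+\fund(\beta,n,y)$; post-transition at $y_0$: $\fund(\alpha+\beta,n,y)=\fund(\alpha,n,y-y_0)$) and then case-analyze directly on the phase at the hit step. Your invariant is somewhat richer than the paper's claim, but it buys you a more uniform treatment: the $\Leftarrow$ direction and the supplementary statement both fall out by reading off the phase, with no need for the ``locate the first arrival'' or contrapositive manoeuvres. Two small remarks: your invocation of Lemma \ref{lem:trans-step-downs} in the supplementary part is unnecessary, since $\alpha+\beta\searrow_n^{y_0}\alpha$ with $y_0\leq x$ already gives $\alpha+\beta\searrow_n^x\alpha$ directly from the definition; and you correctly note that the invariant is bounded for each fixed $y$, which keeps the induction within $\isigma_1$.
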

\begin{proof}
We first prove the following claim by induction on $y$:
\begin{multline*}
 (\fund(\beta,n,y)>0\,\lor\,\fund(\alpha+\beta,n,y)>\alpha)\\
\Rightarrow\quad\alpha+\fund(\beta,n,y+1)=\fund(\alpha+\beta,n,y+1)
\end{multline*}
In the case $y=0$, both the assumption $\beta=\fund(\beta,n,0)>0$ and the assumption $\alpha+\beta=\fund(\alpha+\beta,n,0)>\alpha$ imply $\beta>0$. From the definition of fundamental sequences (see e.g.\ \cite[Definition 1.2]{rathjen13}) one can see that we have
\begin{equation*}
 \alpha+\fund(\beta,n,1)=\alpha+\{\beta\}(n)=\{\alpha+\beta\}(n)=\fund(\alpha+\beta,n,1).
\end{equation*}
Coming to the induction step, observe that $\fund(\beta,n,y+1)>0$ entails
\begin{equation*}
 \fund(\beta,n,y)\geq\{\fund(\beta,n,y)\}(n)=\fund(\beta,n,y+1)>0,
\end{equation*}
and that $\fund(\alpha+\beta,n,y+1)>\alpha$ entails
\begin{equation*}
 \fund(\alpha+\beta,n,y)\geq\{\fund(\alpha+\beta,n,y)\}(n)=\fund(\alpha+\beta,n,y+1)>\alpha.
\end{equation*}
Thus, if $\fund(\beta,n,y+1)>0$ and $\fund(\alpha+\beta,n,y+1)>\alpha$ holds then the induction hypothesis yields $\alpha+\fund(\beta,n,y+1)=\fund(\alpha+\beta,n,y+1)$. In particular we have $\fund(\beta,n,y+1)>0$ in either case. Also, it is easy to check $\fund(\beta,n,y+1)\leq\beta$, which implies that $\alpha$ meshes with $\fund(\beta,n,y+1)$. Then, parallel to the base case, we have
\begin{multline*}
 \alpha+\fund(\beta,n,y+2)=\alpha+\{\fund(\beta,n,y+1)\}(n)=\{\alpha+\fund(\beta,n,y+1)\}(n)\stackrel{\text{IH}}{=}\\
\stackrel{\text{IH}}{=}\{\fund(\alpha+\beta,n,y+1)\}(n)=\fund(\alpha+\beta,n,y+2).
\end{multline*}
Coming to the first claim of the lemma, let us argue for the direction ``$\Leftarrow$'': Assume that we have $\alpha+\beta\searrow_n^x\alpha+\gamma$. Thus there is a $z\leq x$ with $\fund(\alpha+\beta,n,z)=\alpha+\gamma$. We distingish two cases: If we have $\alpha+\beta=\alpha+\gamma$ then we have $\beta=\gamma$, and $\beta\searrow_n^x\gamma$ follows immediately. If we have $\alpha+\beta\neq\alpha+\gamma$ then there must be some $y<z$ with $\fund(\alpha+\beta,n,y)\neq\fund(\alpha+\beta,n,y+1)=\alpha+\gamma$. In view of
\begin{equation*}
 \fund(\alpha+\beta,n,y)\geq\{\fund(\alpha+\beta,n,y)\}(n)=\fund(\alpha+\beta,n,y+1)
\end{equation*}
we can conclude
\begin{equation*}
 \fund(\alpha+\beta,n,y)>\fund(\alpha+\beta,n,y+1)=\alpha+\gamma\geq\alpha.
\end{equation*}
Thus the claim from the beginning of the proof tells us
\begin{equation*}
 \alpha+\fund(\beta,n,y+1)=\fund(\alpha+\beta,n,y+1)=\alpha+\gamma.
\end{equation*}
This implies $\fund(\beta,n,y+1)=\gamma$, and in view of $y<z\leq x$ we see $\beta\searrow_n^x\gamma$. The direction ``$\Rightarrow$'' is similar and slightly easier.\\
The remaining claim is shown by contraposition: Assuming that $\alpha+\beta\searrow_n^x\alpha$ fails, we show that $\fund(\alpha+\beta,n,z)>\alpha$ holds for all $z\leq x$. For $z=0$ it suffices to note that we must have $\alpha+\beta\neq\alpha$ and thus $\alpha+\beta>\alpha$. In the induction step, the induction hypothesis provides the assumption to the claim at the top of this proof. We then get
\begin{equation*}
 \fund(\alpha+\beta,n,z+1)=\alpha+\fund(\beta,n,z+1)\geq\alpha.
\end{equation*}
The assumption that $\alpha+\beta\searrow_n^x\alpha$ fails implies $\fund(\alpha+\beta,n,z+1)\neq\alpha$, so
\begin{equation*}
 \fund(\alpha+\beta,n,z+1)>\alpha
\end{equation*}
must indeed hold.
\end{proof}

Next, we review \cite[Lemma 2.10, Lemma 2.11]{rathjen13}, showing how step-downs can be lifted to powers of $\omega$:

\begin{lemma}\label{lem:step-down-lift-omega}
 Given $\omega^\alpha\searrow_n^x 0$ and $\alpha\searrow_n\beta$ we can conclude $\omega^\alpha\searrow_n^x \omega^\beta$ and $\alpha\searrow_n^x 0$.
\end{lemma}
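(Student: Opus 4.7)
The plan is to reduce the lemma to two auxiliary facts: (i) the unbounded statement $\omega^\alpha \searrow_n \omega^\beta$, and (ii) the second conclusion $\alpha \searrow_n^x 0$. Together with (i) and monotonicity, (i) will also yield the bounded first conclusion almost for free.

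I prove (i) by induction on the least $y_0$ with $\fund(\alpha,n,y_0)=\beta$. The case $y_0 = 0$ is immediate. For the step, set $\alpha' = \fund(\alpha,n,y_0)$, so that $\beta = \{\alpha'\}(n)$. The inductive hypothesis yields $\omega^\alpha \searrow_n \omega^{\alpha'}$, and combining with $\omega^\alpha \searrow_n^x 0$ via Lemma \ref{lem:trans-step-downs} also gives $\omega^{\alpha'} \searrow_n 0$. It remains to exhibit $\omega^{\alpha'} \searrow_n \omega^{\{\alpha'\}(n)}$, which I do by cases on the shape of $\alpha'$: trivial for $\alpha'=0$; a single fundamental step when $\alpha'$ is a limit, since then $\{\omega^{\alpha'}\}(n) = \omega^{\{\alpha'\}(n)}$; and for $\alpha' = \gamma+1$ it reduces to establishing $\omega^\gamma \cdot (n+1) \searrow_n \omega^\gamma$. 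For this last reduction I iterate the ``furthermore'' clause of Lemma \ref{lem:step-down-mesh}: starting from $\omega^\gamma \cdot (n+1) \searrow_n 0$ (an immediate consequence of $\omega^{\alpha'} \searrow_n 0$ via one fundamental step and Lemma \ref{lem:trans-step-downs}), an auxiliary induction on $j \leq n$ peels off one summand $\omega^\gamma$ at a time, using that $\omega^\gamma \cdot (j+1) = \omega^\gamma \cdot j + \omega^\gamma$ meshes, to produce step-downs $\omega^\gamma \cdot (n+1) \searrow_n \omega^\gamma \cdot (n+1-j)$ together with $\omega^\gamma \cdot (n+1-j) \searrow_n 0$; at $j=n$ this collapses to the required $\omega^\gamma \cdot (n+1) \searrow_n \omega^\gamma$.

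Given (i), the first conclusion $\omega^\alpha \searrow_n^x \omega^\beta$ is essentially automatic: picking indices $y_1$ with $\fund(\omega^\alpha,n,y_1) = \omega^\beta$ and $y_2 \leq x$ with $\fund(\omega^\alpha,n,y_2)=0$, the fact that $\omega^\beta > 0$ together with Lemma \ref{lem:fund-sequences-monotone} forces $y_1 < y_2 \leq x$, hence $y_1 \leq x$, as required. For the second conclusion $\alpha \searrow_n^x 0$, I argue by contradiction. Suppose $\fund(\alpha,n,x) > 0$. Then Lemma \ref{lem:fund-sequences-monotone} shows that the values $\fund(\alpha,n,k)$ for $k=0,\dots,x$ form a strictly descending chain of positive ordinals. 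Applying the first conclusion (already established) to each $k \leq x$ with $\beta$ replaced by $\fund(\alpha,n,k)$, there exist indices $z_k \leq x$ with $\fund(\omega^\alpha,n,z_k) = \omega^{\fund(\alpha,n,k)}$. Since the right-hand sides are strictly decreasing in $k$, another application of Lemma \ref{lem:fund-sequences-monotone} forces the $z_k$ to be strictly increasing, so $z_x \geq x$ and hence $z_x = x$. But then $\fund(\omega^\alpha,n,x) = \omega^{\fund(\alpha,n,x)} > 0$, contradicting $\omega^\alpha \searrow_n^x 0$, which yields $\fund(\omega^\alpha,n,x)=0$ by monotonicity.

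The main obstacle is the successor subcase inside (i), where the telescoping step-down from $\omega^\gamma \cdot (n+1)$ down to $\omega^\gamma$ must be carried out as a bounded $\Sigma_1$-induction on the external parameter $n$, repeatedly invoking the mesh clause of Lemma \ref{lem:step-down-mesh}. All other inductions are bounded inductions over natural numbers with $\Sigma_1$ induction formulas, and hence available in $\isigma_1$.
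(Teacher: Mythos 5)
Your proof is correct, but it takes a genuinely different route from the paper's. The paper runs a single induction on $y$ with a two-part invariant, namely $\alpha\searrow_n^y e(\fund(\omega^\alpha,n,y))$ (where $e(\gamma)$ is the exponent of the leading summand of $\gamma$) together with ``$\omega^\alpha\searrow_n^y\omega^\beta$ or $\fund(\omega^\alpha,n,y)>\omega^\beta$''; setting $y:=x$ then discharges both conclusions simultaneously. You instead decompose the statement: first establish the \emph{unbounded} fact $\omega^\alpha\searrow_n\omega^\beta$ by induction along the descent from $\alpha$ to $\beta$, where the successor subcase requires you to reprove inline (in unbounded form) what later appears as Lemma~\ref{lem:step-down-finite-multiples}, via a nested telescoping induction on $j\leq n$ through Lemma~\ref{lem:step-down-mesh}; then the bounded first conclusion drops out from Lemma~\ref{lem:fund-sequences-monotone}, and the second conclusion is obtained by a neat contradiction argument forcing the witnessing indices $z_k$ to satisfy $z_k\geq k$. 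Both routes formalize in $\isigma_1$ (the key inductions are all on $\Sigma_1$-formulas with the relevant ordinals and bounds held as parameters). What the paper's approach buys is compactness: one pass delivers both conclusions and the ``exponent'' bookkeeping is built into the invariant, so no appeal to the downstream Lemma~\ref{lem:step-down-finite-multiples} is needed. What your approach buys is conceptual separation: the unbounded lifting $\alpha\searrow_n\beta\Rightarrow\omega^\alpha\searrow_n\omega^\beta$ is isolated as a clean intermediate statement, and the monotonicity arguments needed to recover the bound $x$ and the conclusion $\alpha\searrow_n^x 0$ are then short and self-contained. One small notational slip to fix: in the step of your induction you should take $\alpha'=\fund(\alpha,n,y_0-1)$ (so that $\beta=\{\alpha'\}(n)=\fund(\alpha,n,y_0)$), not $\alpha'=\fund(\alpha,n,y_0)$; and in the telescoping step the meshing you actually need is $\omega^\gamma\cdot(n+1-j)=\omega^\gamma\cdot(n-j)+\omega^\gamma$.
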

\begin{proof}
 Write $e(\gamma)$ for the exponent of the leading summand of the Cantor normal form of $\gamma$. We agree on $e(0)=0$. Assuming that we have $\alpha\searrow_n\beta$ we prove the following claim by induction over $y$: We have
\begin{equation*}
 \alpha\searrow_n^y e(\fund(\omega^\alpha,n,y))\qquad\text{and}\qquad\Big(\,\omega^\alpha\searrow_n^y \omega^\beta\quad\text{or}\quad\fund(\omega^\alpha,n,y)>\omega^\beta\,\Big).
\end{equation*}
Concerning $y=0$, we first observe $e(\fund(\omega^\alpha,n,0))=e(\omega^\alpha)=\alpha$, and indeed $\alpha\searrow_n^0 \alpha$ holds. As for the second part of the claim, if $\omega^\alpha\searrow_x^0 \omega^\beta$ fails the we must have $\omega^\alpha\neq\omega^\beta$. Furthermore, $\alpha\searrow_n\beta$ allows us to conclude $\alpha\geq\beta$. Together we obtain $\fund(\omega^\alpha,n,0)=\omega^\alpha>\omega^\beta$.\\
We come to the step $y\leadsto y+1$. In case $\fund(\omega^\alpha,n,y)=0$ we also have $\fund(\omega^\alpha,n,y+1)=0$ and the induction step is easily deduced. Otherwise we can write $\fund(\omega^\alpha,n,y)=\omega^\gamma+\delta$ where $\omega^\gamma$ and $\delta$ mesh. Concerning the first half of the claim, the induction hypothesis yields $\alpha\searrow_n^y\gamma$. In case $\delta>0$ we have
\begin{equation*}
 \fund(\omega^\alpha,n,y+1)=\{\fund(\omega^\alpha,n,y)\}(n)=\{\omega^\gamma+\delta\}(n)=\omega^\gamma+\{\delta\}(n),
\end{equation*}
and thus $e(\fund(\omega^\alpha,n,y+1))=\gamma$ remains unchanged. Clearly, we still have $\alpha\searrow_n^{y+1}\gamma$. In case $\delta=0$ we observe that
\begin{equation*}
 e(\fund(\omega^\alpha,n,y+1))=e(\{\omega^\gamma\}(n))=\{\gamma\}(n)
\end{equation*}
holds, no matter if $\gamma$ is zero, a successor or a limit ordinal. Thus what we need is $\alpha\searrow_n^{y+1}\{\gamma\}(n)$, and this follows easily from $\alpha\searrow_n^y\gamma$.\\
Still concerning the induction step, let us come to the second half of the claim. As above we write $\fund(\omega^\alpha,n,y)=\omega^\gamma+\delta$. If $\omega^\alpha\searrow_n^{y+1} \omega^\beta$ holds then we are done. Otherwise $\omega^\alpha\searrow_n^y \omega^\beta$ must fail as well, so the induction hypothesis implies $\gamma\geq\beta$, and indeed $\gamma>\beta$ in case $\delta=0$. Still under the assumption that $\omega^\alpha\searrow_n^{y+1} \omega^\beta$ fails, the goal $\fund(\omega^\alpha,n,y+1)>\omega^\beta$ reduces to
\begin{equation*}
\fund(\omega^\alpha,n,y+1)\geq\omega^\beta.
\end{equation*}
For $\delta>0$ this holds by
\begin{equation*}
 \fund(\omega^\alpha,n,y+1)=\{\omega^\gamma+\delta\}(n)=\omega^\gamma+\{\delta\}(n)\geq\omega^\gamma\geq\omega^\beta.
\end{equation*}
For $\delta=0$, observe that $\gamma>\beta$ excludes the case $\gamma=0$. In the case of a successor or limit ordinal we have
\begin{equation*}
 \fund(\omega^\alpha,n,y+1)=\{\omega^\gamma\}(n)\geq\omega^{\{\gamma\}(n)},
\end{equation*}
so it remains to establish $\{\gamma\}(n)\geq\beta$: The first part of the induction hypothesis gives $\alpha\searrow_n^y\gamma$. Also, we have the assumptions $\alpha\searrow_n\beta$ and $\gamma>\beta$. By Lemma \ref{lem:trans-step-downs} we obtain $\gamma\searrow_n\beta$. Now $\gamma\neq\beta$ implies $\{\gamma\}(n)\searrow_n\beta$, and thus indeed $\{\gamma\}(n)\geq\beta$.\\
To deduce the claim of the lemma, assume that we have $\omega^\alpha\searrow_x^n 0$ and $\alpha\searrow_n\beta$. The first assumption implies $\fund(\omega^\alpha,n,x)=0$, which makes $\fund(\omega^\alpha,n,x)>\omega^\beta$ impossible. Thus we must indeed have $\omega^\alpha\searrow_n^x \omega^\beta$ and $\alpha\searrow_n^y e(0)=0$.
\end{proof}

To proceed, we need the auxiliary result of \cite[Lemma 2.8]{rathjen13}:

\begin{lemma}\label{lem:step-down-finite-multiples}
 If we have $\omega^\alpha\cdot k\searrow_n^x 0$ and $l\leq k$ then we have $\omega^\alpha\cdot k\searrow_n^x \omega^\alpha\cdot l$.
\end{lemma}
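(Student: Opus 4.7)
The plan is to reduce the claim to the last part of Lemma~\ref{lem:step-down-mesh}. First, I would dispose of the trivial cases: when $l=k$ we have $\omega^\alpha\cdot k\searrow_n^0\omega^\alpha\cdot l$ by reflexivity, and when $l=0$ the conclusion is literally the hypothesis. So assume $0<l<k$.

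The key observation is the decomposition $\omega^\alpha\cdot k=\omega^\alpha\cdot l+\omega^\alpha\cdot(k-l)$, for which I would check that $\omega^\alpha\cdot l$ meshes with $\omega^\alpha\cdot(k-l)$. This is immediate from the definition: the smallest Cantor normal form exponent of $\omega^\alpha\cdot l$ is $\alpha$, and $\omega^\alpha\cdot(k-l)<\omega^\alpha\cdot\omega=\omega^{\alpha+1}$, so the meshing condition is satisfied.

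Having secured meshing, I would invoke Lemma~\ref{lem:step-down-mesh} with the substitution $\alpha\rightsquigarrow\omega^\alpha\cdot l$, $\beta\rightsquigarrow\omega^\alpha\cdot(k-l)$. Its last clause states that if $\alpha+\beta\searrow_n^x 0$ (with $\alpha$ meshing with $\beta$) then $\alpha+\beta\searrow_n^x\alpha$. Applied in our setting this reads
\begin{equation*}
\omega^\alpha\cdot k=\omega^\alpha\cdot l+\omega^\alpha\cdot(k-l)\,\searrow_n^x\,\omega^\alpha\cdot l,
\end{equation*}
which is precisely the desired conclusion.

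There is essentially no obstacle: the lemma is a one-line corollary of the meshing lemma once the right decomposition is spotted. The only point that deserves explicit mention in the formal write-up is the verification of the meshing condition, since without it the invoked clause of Lemma~\ref{lem:step-down-mesh} does not apply.
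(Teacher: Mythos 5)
Your proof is correct, and it is genuinely simpler than the paper's. The paper proves the lemma by contraposition: it shows by induction on $z$ that if $\omega^\alpha\cdot k\searrow_n^x\omega^\alpha\cdot l$ fails then $\fund(\omega^\alpha\cdot k,n,z)>\omega^\alpha\cdot l$ holds for all $z\leq x$, and the inductive step mimics the argument used \emph{inside} the proof of Lemma~\ref{lem:step-down-mesh} (the paper even flags the resemblance with ``as in the proof of Lemma~\ref{lem:step-down-mesh}''). You instead spot the decomposition $\omega^\alpha\cdot k=\omega^\alpha\cdot l+\omega^\alpha\cdot(k-l)$, verify the meshing condition ($\omega^\alpha\cdot(k-l)<\omega^{\alpha+1}$, immediate since $k-l<\omega$), and then invoke the already-proved ``Furthermore'' clause of Lemma~\ref{lem:step-down-mesh} directly, with $\omega^\alpha\cdot l$ in the role of $\alpha$ and $\omega^\alpha\cdot(k-l)$ in the role of $\beta$. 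The trivial cases $l=0$ and $l=k$ are handled correctly as well. Your route buys a shorter, non-redundant argument that reuses the earlier lemma as a black box, whereas the paper's route replays essentially the same inductive step a second time; since Lemma~\ref{lem:step-down-mesh} appears before this lemma, there is no circularity, and nothing in the $\isigma_1$-formalization prevents your reduction.
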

\begin{proof}
 We prove the contrapositive of the lemma: If we do not have $\omega^\alpha\cdot k\searrow_n^x \omega^\alpha\cdot l$ then $\fund(\omega^\alpha\cdot k,n,z)>\omega^\alpha\cdot l$ holds for all $z\leq x$. This is shown by induction on $z$: For $z=0$ it suffices to observe that we must have $\omega^\alpha\cdot k\neq \omega^\alpha\cdot l$ and thus $\omega^\alpha\cdot k>\omega^\alpha\cdot l$. In the induction step the induction hypothesis implies that we have
\begin{equation*}
 \omega^\alpha\cdot l<\fund(\omega^\alpha\cdot k,n,z)\leq\omega^\alpha\cdot k<\omega^{\alpha+1}.
\end{equation*}
Thus $\fund(\omega^\alpha\cdot k,n,z)$ is of the form
\begin{equation*}1
 \fund(\omega^\alpha\cdot k,n,z)=\omega^\alpha\cdot l+\beta
\end{equation*}
where $\omega^\alpha\cdot l$ meshes with $\beta>0$. As in the proof of Lemma \ref{lem:step-down-mesh} we can conclude
\begin{equation*}
\fund(\omega^\alpha\cdot k,n,z+1)=\{\fund(\omega^\alpha\cdot k,n,z)\}(n)=\omega^\alpha\cdot l+\{\beta\}(n)\geq\omega^\alpha\cdot l.
\end{equation*}
The assumption that $\omega^\alpha\cdot k\searrow_n^x \omega^\alpha\cdot l$ fails tells us that $\fund(\omega^\alpha\cdot k,n,z+1)=\omega^\alpha\cdot l$ is impossible, so that we indeed get the required strict equality.
\end{proof}

We come to an important result which is very similar to \cite[Lemma 2.12]{rathjen13}:

\begin{lemma}\label{lem:increase-setp-down}
 Given $\alpha\searrow_n^x 0$ and $m\leq n$ we have $\alpha\searrow_n^x\{\alpha\}(m)$.
\end{lemma}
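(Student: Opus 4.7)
The plan is to argue by structural induction on the Cantor normal form of $\alpha$, which in $\isigma_1$ is carried out as $\Pi_1$-induction on the primitive-recursive CNF depth. Both trivial cases are immediate: for $\alpha = 0$ we have $\{0\}(m) = 0$, so the claim coincides with the hypothesis; for $\alpha = \alpha'+1$ we have $\{\alpha\}(m) = \alpha'$ independently of $m$, while $\alpha\searrow_n^x 0$ together with $\alpha\neq 0$ forces $x\geq 1$ and $\fund(\alpha,n,1)=\alpha'$.

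Suppose $\alpha$ is a limit ordinal, written $\alpha = \beta+\omega^\gamma$ with $\omega^\gamma$ the final CNF summand (so $\gamma>0$ and $\beta$ meshes with $\omega^\gamma$, hence also with $\omega^{\{\gamma\}(n)}$ and $\omega^{\{\gamma\}(m)}$). The first step-down gives $\{\alpha\}(n)\searrow_n^{x-1}0$. If $\gamma = \gamma'+1$ is a successor, then $\{\alpha\}(n) = \beta+\omega^{\gamma'}\cdot(n+1)$ and $\{\alpha\}(m) = \beta+\omega^{\gamma'}\cdot(m+1)$; stripping $\beta$ by Lemma \ref{lem:step-down-mesh} yields $\omega^{\gamma'}\cdot(n+1)\searrow_n^{x-1}0$, Lemma \ref{lem:step-down-finite-multiples} with $l=m+1\leq n+1$ provides $\omega^{\gamma'}\cdot(n+1)\searrow_n^{x-1}\omega^{\gamma'}\cdot(m+1)$, and reattaching $\beta$ via Lemma \ref{lem:step-down-mesh} gives $\{\alpha\}(n)\searrow_n^{x-1}\{\alpha\}(m)$. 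Lemma \ref{lem:trans-step-downs} then composes with the initial step to yield $\alpha\searrow_n^x\{\alpha\}(m)$.

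If instead $\gamma$ is a limit, then $\{\alpha\}(n) = \beta+\omega^{\{\gamma\}(n)}$ and $\{\alpha\}(m) = \beta+\omega^{\{\gamma\}(m)}$. As before Lemma \ref{lem:step-down-mesh} extracts $\omega^{\{\gamma\}(n)}\searrow_n^{x-1}0$, and Lemma \ref{lem:step-down-lift-omega} delivers $\{\gamma\}(n)\searrow_n^{x-1}0$, whence $\gamma\searrow_n^x 0$ by prepending the step $\gamma\to\{\gamma\}(n)$. The inductive hypothesis, applied to $\gamma$ (whose CNF depth is strictly smaller than that of $\alpha$), now produces $\gamma\searrow_n^x\{\gamma\}(m)$. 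Combining this with $\gamma\searrow_n\{\gamma\}(n)$ and the monotonicity $\{\gamma\}(n)\geq\{\gamma\}(m)$, Lemma \ref{lem:trans-step-downs} yields $\{\gamma\}(n)\searrow_n^x\{\gamma\}(m)$; feeding this back into Lemma \ref{lem:step-down-lift-omega} produces $\omega^{\{\gamma\}(n)}\searrow_n^{x-1}\omega^{\{\gamma\}(m)}$; and a final application of Lemma \ref{lem:step-down-mesh} followed by Lemma \ref{lem:trans-step-downs} assembles $\alpha\searrow_n^x\{\alpha\}(m)$.

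The main obstacle is not the ordinal manipulation, which is routine given the preceding lemmas, but the small auxiliary fact that $\{\gamma\}(m)\leq\{\gamma\}(n)$ for $m\leq n$; since this is not among the earlier results it needs its own short case analysis on the three shapes of $\gamma$ in the Buchholz-Wainer definition (trivial for $\gamma = 0$ and $\gamma$ a successor, and immediate from the defining strict monotonicity of fundamental sequences when $\gamma$ is a limit).
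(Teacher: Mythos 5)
Your argument is correct and mirrors the paper's proof closely: the same CNF-height induction, the same trivial dispatch of the zero and successor cases, and the same split of the limit case according to whether the final exponent $\gamma$ is a successor or a limit, with Lemmas \ref{lem:trans-step-downs}, \ref{lem:step-down-mesh}, \ref{lem:step-down-lift-omega} and \ref{lem:step-down-finite-multiples} doing the work. The successor-$\gamma$ subcase is essentially word-for-word the paper's. For limit $\gamma$ you take a local detour by first passing from $\alpha$ to $\{\alpha\}(n)$ and then reasoning about $\omega^{\{\gamma\}(n)}$ and $\{\gamma\}(n)$; the paper instead stays with $\alpha$, $\omega^\gamma$ and $\gamma$ throughout, extracting $\omega^\gamma\searrow_n^x 0$ by Lemma \ref{lem:step-down-mesh}, then $\gamma\searrow_n^x 0$ by Lemma \ref{lem:step-down-lift-omega}, applying the induction hypothesis to get $\gamma\searrow_n^x\{\gamma\}(m)$, and re-ascending to $\alpha\searrow_n^x\{\alpha\}(m)$ via the same two lemmas. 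That more direct route never introduces $\{\gamma\}(n)$ and so sidesteps the auxiliary fact your detour requires, the index-monotonicity $\{\gamma\}(m)\leq\{\gamma\}(n)$ for $m\leq n$. You correctly flag that this fact is not among the earlier results; note, though, that over $\isigma_1$ it is not quite ``immediate from the definition'' but is itself a small CNF induction on $\gamma$, since the Sommer encoding only supplies the $\Delta_0$-defined graph of the fundamental-sequence function and its monotonicity in the index must be re-verified in the theory. Both versions are sound; the paper's handling of the limit-$\gamma$ subcase is marginally shorter because it avoids that extra lemma.
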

\begin{proof}
 The proof is by induction over the height of the Cantor normal form of the ordinal $\alpha$, with all other parameters fixed. If $\alpha$ is zero or a successor ordinal we have $\{\alpha\}(m)=\{\alpha\}(n)$ and the claim is immediate (note that for $x=0$ the assumption $\alpha\searrow_n^x 0$ forces $\alpha=0$). If $\alpha$ is a limit ordinal we can write $\alpha=\beta+\omega^\gamma$ where $\beta$ meshes with $\omega^\gamma$ and we have $\gamma>0$ (or we have $\alpha=\omega^\gamma$, which is easier). We distinguish two cases: First, assume that $\gamma=\delta+1$ is a successor ordinal. Then we have $\{\alpha\}(n)=\beta+\omega^\delta\cdot(n+1)$, and $\alpha\searrow_n^x 0$ implies $\beta+\omega^\delta\cdot(n+1)\searrow_n^{x-1} 0$. Since $\beta$ still meshes with $\omega^\delta\cdot(n+1)<\omega^\gamma$ Lemma \ref{lem:step-down-mesh} gives $\omega^\delta\cdot(n+1)\searrow_n^{x-1} 0$. Now Lemma \ref{lem:step-down-finite-multiples} tells us $\omega^\delta\cdot(n+1)\searrow_n^{x-1} \omega^\delta\cdot(m+1)$. By the other direction of Lemma \ref{lem:step-down-mesh} we get $\beta+\omega^\delta\cdot(n+1)\searrow_n^{x-1} \beta+\omega^\delta\cdot(m+1)$, and finally $\alpha\searrow_n^x \beta+\omega^\delta\cdot(m+1)=\{\alpha\}(m)$.\\
 We come to the other case, where $\gamma$ is a limit ordinal. From $\alpha\searrow_n^x 0$ and Lemma \ref{lem:step-down-mesh} we learn $\omega^\gamma\searrow_n^x 0$. From Lemma \ref{lem:step-down-lift-omega} we learn $\gamma\searrow_n^x 0$. Since the height of the Cantor normal form of $\gamma$ is smaller than the height of the Cantor normal form of $\alpha$ the induction hypothesis yields $\gamma\searrow_n^x\{\gamma\}(m)$. Another application of Lemma \ref{lem:step-down-lift-omega} gives $\omega^\gamma\searrow_n^x \omega^{\{\gamma\}(m)}$. Finally, we can invoke Lemma \ref{lem:step-down-mesh} to get $\alpha\searrow_n^x \beta+\omega^{\{\gamma\}(m)}=\{\alpha\}(m)$.
\end{proof}

Finally, we need to know that step-downs to a constant member of the fundamental sequence always end in zero:

\begin{lemma}\label{lem:all-step-downs-to-zero}
 For any $\alpha<\varepsilon_0$ and any $n$ we have $\alpha\searrow_n 0$.
\end{lemma}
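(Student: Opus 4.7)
The plan is to prove the claim by induction on the Cantor normal form height of $\alpha$. The base case $\alpha=0$ is immediate from $\fund(0,n,0)=0$. For the inductive step, I would write $\alpha=\omega^{\alpha_0}+\cdots+\omega^{\alpha_{k-1}}$ in CNF with each $\alpha_i$ of smaller CNF-height. The outer inductive hypothesis then yields $\alpha_i\searrow_n 0$ for each $i$. Since the partial sum $\omega^{\alpha_0}+\cdots+\omega^{\alpha_{j-1}}$ meshes with $\omega^{\alpha_j}$, Lemma \ref{lem:step-down-mesh} combined with Lemma \ref{lem:trans-step-downs} lets us peel off summands from the right one at a time (by a short inner induction on the summand count). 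So everything reduces to the key subsidiary claim: whenever $\beta\searrow_n 0$ holds for some $\beta$ arising as one of the $\alpha_i$, one also has $\omega^\beta\searrow_n 0$.

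I would prove this subsidiary claim by $\Sigma_1$-induction on the step-down length $y$ with $\fund(\beta,n,y)=0$. The base $y=0$ forces $\beta=0$, hence $\omega^\beta=1$ and $\fund(1,n,1)=0$. For $y>0$ I would case-split on the form of $\beta$. In the successor case $\beta=\gamma+1$, one has $\{\omega^\beta\}(n)=\omega^\gamma\cdot(n+1)$, and $\fund(\gamma,n,y-1)=0$ lets the sub-inductive hypothesis supply $\omega^\gamma\searrow_n 0$. Then $n+1$ iterated applications of Lemma \ref{lem:step-down-mesh} (using that $\omega^\gamma$ meshes with itself) together with Lemma \ref{lem:trans-step-downs} deliver $\omega^\gamma\cdot(n+1)\searrow_n 0$, hence $\omega^\beta\searrow_n 0$. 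In the limit case $\{\omega^\beta\}(n)=\omega^{\{\beta\}(n)}$, so the sub-hypothesis applied at length $y-1$ yields $\omega^{\{\beta\}(n)}\searrow_n 0$, and one extra step completes $\omega^\beta\searrow_n 0$.

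The main obstacle is keeping the inductions within the strength of $\isigma_1$. Both inductions run over a natural-number parameter (the step-down length $y$ for the inner one, the CNF-height rank for the outer), and the conclusions being established at each step are $\Sigma_1$-statements about specific ordinals rather than universally quantified claims, which is what makes $\Sigma_1$-induction apply. Some care is needed to ensure that the peeling-off procedure on the CNF summands and the finite iteration in the successor case are formalized through primitive recursive constructions on $\fund$, so that the witnessing step-count at each stage is genuinely $\Sigma_1$-definable from the witnesses provided by the inductive hypotheses.
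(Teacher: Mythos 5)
Your outline captures the right mathematical ingredients---the lifting step $\beta\searrow_n 0 \Rightarrow \omega^\beta\searrow_n 0$ and the peeling of CNF summands via Lemmas \ref{lem:step-down-mesh} and \ref{lem:trans-step-downs}---but your justification for why the inductions stay within $\isigma_1$ does not hold up. You claim the conclusions established at each stage are ``$\Sigma_1$-statements about specific ordinals rather than universally quantified claims.'' However, the inner induction on the step-down length $y$ cannot be run with a fixed $\beta$: in the successor case the hypothesis must be invoked at $\gamma$ (where $\beta=\gamma+1$), and in the limit case at $\{\beta\}(n)$, neither of which is the parameter $\beta$. So the formula actually subject to induction is $\forall_\beta\bigl(\fund(\beta,n,y)=0 \rightarrow \omega^\beta\searrow_n 0\bigr)$, which is $\Pi_2$, and the outer induction on CNF-height has the same complexity. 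Since $\isigma_1$ does not prove $\Pi_2$-induction, this is a genuine obstruction and not a formalization detail. The remedy you gesture at in your final sentence---making the witnessing step-counts primitive recursively definable from the witnesses of the inductive hypotheses---is the right idea, but exhibiting those bounds and carrying them through both layers of induction is essentially the whole content of the lemma, not something that can be deferred to ``some care.''

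The paper avoids the nested induction altogether by an argument inspired by Solovay. It defines up front a primitive recursive function $\text{Num}(\alpha,n)$ that substitutes $n+2$ for $\omega$ throughout the hereditary Cantor normal form of $\alpha$, proves the $\Delta_0$-fact $\text{Num}(\alpha,n)>\text{Num}(\{\alpha\}(n),n)$ for $\alpha>0$ by structural induction over the normal form, and then derives a contradiction from the assumption that $\alpha\searrow_n^{\text{Num}(\alpha,n)}0$ fails by a single low-complexity induction showing that the $\text{Num}$-value strictly decreases along the step-down iterates. All inductions are then at $\Delta_0$/$\Pi_1$ level. If you carried out your plan with explicit bounds, you would end up defining a bounding function that plays essentially the role of $\text{Num}$; defining that function in one pass, rather than extracting it from a double induction whose induction formula is too complex for $\isigma_1$, is precisely what keeps the argument within the base theory.
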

\begin{proof}
 Our proof is inspired by \cite[Proposition 2.9]{solovay81}. First, we define a primitive recursive function which maps a pair of an ordinal and a natural number to a natural number. The idea is to replace any occurrence of $\omega$ in the (hereditary) Cantor normal form of $\alpha$ by the number $n+2$. Formally, the definition is by structural induction over the Cantor normal form of ordinals: We set $\text{Num}(0,n):=0$ and
\begin{equation*}
 \text{Num}(\omega^{\alpha_1}\cdot m_1+\dots +\omega^{\alpha_k}\cdot m_k,n):=(n+2)^{\text{Num}(\alpha_1,n)}\cdot m_1+\dots +(n+2)^{\text{Num}(\alpha_k,n)}\cdot m_k.
\end{equation*}
Observe that we have $\text{Num}(\gamma+1,n)=\text{Num}(\gamma,n)+1$. By structural induction over the Cantor normal form of $\alpha$ one can then show that we have
\begin{equation*}
 \text{Num}(\alpha,n)>\text{Num}(\{\alpha\}(n),n)\qquad\text{whenever $\alpha>0$}.
\end{equation*}
Aiming at a contradiction, assume that $\alpha\searrow_n^{\text{Num}(\alpha,n)} 0$ fails. This means that we have $\fund(\alpha,n,x)>0$ and thus
\begin{equation*}
 \text{Num}(\fund(\alpha,n,x),n)>\text{Num}(\fund(\alpha,n,x+1),n)\qquad\text{for all $x\leq\text{Num}(\alpha,n)$}.
\end{equation*}
In view of $\text{Num}(\fund(\alpha,n,0),n)=\text{Num}(\alpha,n)$ we can use induction to arrive at
\begin{equation*}
 \text{Num}(\fund(\alpha,n,\text{Num}(\alpha,n)+1),n)<0,
\end{equation*}
which is absurd.
\end{proof}

Putting the pieces together we can show the open result:

\begin{proof}[Proof of Proposition \ref{prop:step-down-from-argument}]
In view of Lemma \ref{lem:all-step-downs-to-zero} it suffices to prove the following claim:
\begin{equation*}
 \parbox{11cm}{Consider a step-down argument $s$ and a number $n\geq\stepba(s)$. If we have $\stepto(s)\searrow_n^x 0$ then we have $\stepto(s)\searrow_n^x \stepbo(s)$.}
\end{equation*}
The proof is by structural induction over step-down arguments, which are finite terms. The parameters $n$ and $x$ are fixed throughout. We start with the base case $s=\fund(m)^\alpha_{\{\alpha\}(m)}$, with $\stepto(s)=\alpha$ and $\stepbo(s)=\{\alpha\}(m)$. The assumption $n\geq\stepba(s)$ amounts to $n\geq m$. Then the claim is nothing but Lemma \ref{lem:increase-setp-down}.\\
Next, consider a step-down argument of the form $s=s^\gamma_{\stepbo(s)}*s^{\stepto(s)}_\gamma$. Note that we have $\stepba(s)=\max\{\stepba(s^\gamma_{\stepbo(s)}),\stepba(s^{\stepto(s)}_\gamma)\}$. Thus the induction hypothesis gives $\stepto(s)\searrow_n^x\gamma$. Having established this much, Lemma \ref{lem:trans-step-downs} yields $\gamma\searrow_n^x 0$, and another application of the induction hypothesis provides $\gamma\searrow_n^x\stepbo(s)$. Now Lemma \ref{lem:trans-step-downs} gives $\stepto(s)\searrow_n^{2x}\stepbo(s)$. In view of the assumption $\stepto(s)\searrow_n^x 0$ Lemma \ref{lem:fund-sequences-monotone} improves this to $\stepto(s)\searrow_n^{x}\stepbo(s)$.\\
We come to a step-down argument $s=\alpha+s^\beta_\alpha$ where $\alpha$ and $\beta$ mesh. Note that we have $\stepto(s)=\alpha+\beta$ and $\stepto(s)=\alpha+\gamma$. Given $\alpha+\beta\searrow_n^x 0$ Lemma \ref{lem:step-down-mesh} implies $\beta\searrow_n^x 0$, so that the induction hypothesis yields $\beta\searrow_n^x\gamma$. Again by Lemma \ref{lem:step-down-mesh} we ge the desired $\alpha+\beta\searrow_n^x\alpha+\gamma$.\\
Finally, consider an argument $s=\omega^{s^\alpha_\beta}$ with $\stepto(s)=\omega^\alpha$ and $\stepbo(s)=\omega^\beta$. Given $\omega^\alpha\searrow_n^x 0$ Lemma \ref{lem:step-down-lift-omega} yields $\alpha\searrow_n^x 0$. The induction hypothesis allows to conclude $\alpha\searrow_n^x \beta$. Another application of Lemma \ref{lem:step-down-lift-omega} gives the desired $\omega^\alpha\searrow_n^x\omega^\beta$.
\end{proof}

\bibliographystyle{alpha}
\bibliography{Reflection-Fragments-Freund}

\end{document}